\documentclass[11pt]{amsart}

\usepackage[T1]{fontenc}
\usepackage[utf8]{inputenc}
\usepackage{lmodern}
\usepackage[margin=1in]{geometry}
\usepackage{amsmath,amssymb,amsthm,mathtools}
\usepackage{mathrsfs}
\usepackage{graphicx}
\usepackage{enumitem}
\usepackage{microtype}
\usepackage{placeins}
\usepackage[colorlinks=true,linkcolor=blue,citecolor=blue,urlcolor=blue]{hyperref}
\usepackage{tikz}
\usetikzlibrary{arrows.meta,positioning,calc,decorations.pathreplacing,fit,shapes.geometric}

\numberwithin{equation}{section}

\newtheorem{theorem}{Theorem}[section]
\newtheorem{proposition}[theorem]{Proposition}
\newtheorem{lemma}[theorem]{Lemma}
\newtheorem{corollary}[theorem]{Corollary}
\newtheorem{definition}[theorem]{Definition}
\newtheorem{setup}[theorem]{Setup}
\newtheorem{remark}[theorem]{Remark}

\newcommand{\Fq}{\mathbb F_q}
\newcommand{\NN}{\mathbb N}
\newcommand{\CC}{\mathbb C}
\newcommand{\Spec}{\operatorname{Spec}}
\newcommand{\diag}{\operatorname{diag}}

\title[Eisenstein scattering and Plancherel decomposition]{Eisenstein Scattering and Plancherel Decomposition\\on Cuspidal Bruhat--Tits Quotients}
\author{Soonki Hong}
\address{Department of Mathematical Education, Catholic Kwandong University, Gangneung 25601, Republic of Korea}
\email{soonki.hong@cku.ac.kr}
\author{Sanghoon Kwon}
\thanks{This work was supported by the Basic Science Research Program through the National Research Foundation of Korea (NRF), funded by the Ministry of Education (grant no. RS-2026-25576727).}
\address{Department of Mathematical Education, Catholic Kwandong University, Gangneung 25601, Republic of Korea}
\email{skwon@cku.ac.kr}
\date{September 22, 2026}
\subjclass[2020]{Primary 47A40; Secondary 47A70, 11F72, 47B36, 05C50, 20E08.}
\keywords{Bruhat--Tits tree, Eisenstein series, Plancherel formula, arithmetic quotient, Jacobi operator, scattering matrix, Maass--Selberg relation.}
\hypersetup{
  pdftitle={Eisenstein Scattering and Plancherel Decomposition on Cuspidal Bruhat--Tits Quotients},
  pdfauthor={Soonki Hong and Sanghoon Kwon},
  pdfsubject={Eisenstein series, scattering, and spectral decomposition on arithmetic tree quotients},
  pdfkeywords={Bruhat--Tits tree, Eisenstein series, Plancherel formula, Jacobi operator, scattering matrix, Maass--Selberg relation}
}

\begin{document}

\begin{abstract}
For arithmetic quotients of Bruhat--Tits trees with finitely many cusps, we establish an explicit unitary correspondence between the spherical Eisenstein transform, with the Eisenstein series normalized by their constant terms, and the scattering transform of an associated Jacobi operator with finite core. Tracking the Haar measure, stabilizer weights, height coordinates, and cusp widths yields the Plancherel measure and shows that the absolutely continuous spectrum has multiplicity equal to the number of cusps. From a discrete Green identity we derive a matrix-valued Maass--Selberg formula for the Hermitian matrix
$iS(\theta)^*\partial_\theta S(\theta)$,
where $S(\theta)$ is the scattering matrix. Its trace is determined by $\det S(\theta)$, while the full matrix retains additional cusp-to-cusp information.

After the corresponding change of normalization, the finite Schur complement obtained by eliminating the cusp rays agrees with the resonance matrix of Arends-Peterson-Weich. Using their resonance computations as input, we distinguish eigenvalues supported entirely in the finite core from poles of the scattering matrix. The Nagao and $\Gamma_0(T)$ quotients, together with a four-cusp quotient arising from an elliptic curve over $\mathbb F_3$, make the normalizations and matrix-valued conclusions explicit.
\end{abstract}

\maketitle

\setcounter{tocdepth}{1}
\tableofcontents

\section{Introduction}\label{sec:introduction}

Let \(q\) be a prime power, let
\(K=\mathbb F_q(\!(t^{-1})\!)\), and let \(\mathcal T\)
be the \((q+1)\)-regular Bruhat--Tits tree of \(\mathrm{PGL}_2(K)\).  If
\(\Gamma<\mathrm{PGL}_2(K)\) is a non-uniform arithmetic lattice, reduction
theory describes \(Y=\Gamma\backslash\mathcal T\) as a finite graph of
groups with finitely many cuspidal rays
\cite{Nagao,Serre,Harder,BassLubotzky,Bravo}; for harmonic analysis on trees
and groups acting on them, see also \cite{Cartier,FigaTalamancaNebbia}.  The spherical automorphic
Hilbert space is the weighted space \(L^2(Y,\lambda)\), where
\(\lambda(v)=|\Gamma_v|^{-1}\), and the quotient adjacency operator is
self-adjoint for this stabilizer-volume measure.  The precise quotient and
cusp conventions are fixed in Setup~\ref{setup:quotient-adjacency} below.

Classical automorphic theory describes the continuous part by Eisenstein
series, their constant terms, and intertwining operators
\cite{JacquetLanglands,LanglandsEisenstein,LiEisenstein}.  Scattering theory on
graphs gives a complementary finite-dimensional description after the cusps
are cut beyond a finite core
\cite{RomanovRudinBT,RomanovRudinPadic,Novikov,VarbanovBrun,ColinTruc,Golinskii}.
The purpose of this paper is to identify these descriptions with all scalar,
stabilizer, and cusp-width factors visible.  The principal output is an
explicit Eisenstein--Plancherel decomposition; the finite resonance matrix is
used as one means of computing the coefficients, not as the object whose zero
set is to be recomputed.

\subsection{Main results}\label{subsec:main-results}

Write \(Uf=\lambda^{1/2}f\) for the unitary passage from stabilizer measure to
standard \(\ell^2\), and let \(r\) be the number of cusps.  Beyond a finite
set of vertices, the conjugated operator \(J=UAU^{-1}\) is the orthogonal sum
of \(r\) free Jacobi half-lines with
off-diagonal coefficient \(a=\sqrt q\).  If \(\Psi_\theta:\CC^r\to
\mathbb C^{V(Y)}\) denotes the generalized-eigenfunction map whose
incoming Jacobi coefficients are the standard basis vectors, and if
\(P_{\mathrm{ac}}\) denotes the absolutely continuous spectral projection of
\(J\), then
\begin{equation}\label{eq:intro-standard-resolution}
 P_{\mathrm{ac}}
 =\frac1{2\pi}\int_0^\pi\Psi_\theta\Psi_\theta^*\,d\theta,
 \qquad x=2\sqrt q\cos\theta.
\end{equation}
There is no singular continuous spectrum, and the absolutely continuous
multiplicity is the number \(r\) of cusps.  Together with the finite-dimensional
point spectrum, \eqref{eq:intro-standard-resolution} gives a complete spectral
resolution of the spherical adjacency operator.

Let \(\mathscr H_{\mathrm{ac}}=\operatorname{ran}P_{\mathrm{ac}}\).  More
precisely, \(f\mapsto\Psi_\theta^*f\), initially defined for finitely
supported \(f\), extends to a unitary map
\begin{equation}\label{eq:intro-unitary-transform}
 \mathcal F_{\mathrm{ac}}:\mathscr H_{\mathrm{ac}}
 \longrightarrow
 L^2\!\left((0,\pi),\mathbb C^r;\frac{d\theta}{2\pi}\right)
\end{equation}
that conjugates \(J\) to multiplication by \(2\sqrt q\cos\theta\).
The coefficient-level content is strengthened by an exact matrix-valued
Maass--Selberg relation.  If \(P_N\) retains the finite core and the first
\(N\ge1\) vertices of every free cusp, and if
\(S_\theta=S(e^{i\theta})\), then
\begin{equation}\label{eq:intro-maass-selberg}
 \Psi_\theta^*P_N\Psi_\theta
 =(2N+1)I_r+iS_\theta^*\partial_\theta S_\theta
 +\frac{i}{2\sin\theta}
 \left(e^{-i(2N+1)\theta}S_\theta
       -e^{i(2N+1)\theta}S_\theta^*\right).
\end{equation}
The trace of the middle term is determined by \(\det S_\theta\).  When there
is more than one cusp, its traceless part records channel mixing that is not,
in general, determined by the scattering determinant.  Equivalently, the
full middle term is the Ces\`aro-renormalized finite part of the truncated
Gram matrices, whereas the determinant supplies only its trace.

After conjugating back by \(U^{-1}\), write \(P_{\mathrm{ac}}\) also for the
absolutely continuous projection of \(A\).  The same resolution then takes a
particularly simple automorphic form.  Normalize the spherical Eisenstein
family \(\mathcal E_s\) by requiring its incoming
constant term at the \(c\)-th cusp to be \(q^{ns}e_c\), where \(e_c\) is the
\(c\)-th standard vector of \(\mathbb C^r\), and put
\[
 s=\frac12+\frac{i\theta}{\log q},
 \qquad W=\diag(w_1,\ldots,w_r),
\]
where \(w_c\) is the width determined by the chosen height origin at that cusp.
Then
\begin{equation}\label{eq:intro-automorphic-resolution}
 P_{\mathrm{ac}}
 =\int_0^\pi \mathcal E_s\,
   \frac{1}{2\pi(q+1)}W^{-1}\,
   \mathcal E_s^*\,d\theta.
\end{equation}
The exact conversion to Efrat's holomorphic local factors gives its
nonconstant density from the constant matrix in
\eqref{eq:intro-automorphic-resolution}.  Product-formula, constant-term, and
double-coset arguments identify this graph family with the fixed-level adelic
Eisenstein family, including the incoming coefficient and every finite-level
measure factor.  In the notation of Setup~\ref{setup:global-automorphic}, the
resulting transform is the unitary equivalence
\begin{equation}\label{eq:intro-adelic-unitary-identification}
 \bigoplus_{j=1}^h L^2_{\mathrm{Eis}}(Y_j)
 \ \xrightarrow{\ \sim\ }\
 L^2\!\left((0,\pi),
   \mathbb C^{\sum_j r_j};\frac{d\theta}{2\pi}\right).
\end{equation}
Here \(h\) is the number of fixed-level components,
\(Y_j=\Gamma_j\backslash\mathcal T\), and \(r_j\) is the number of cusps of
\(Y_j\),
and the unnormalized spherical Hecke operator at \(\infty\) becomes
multiplication by \(2\sqrt q\cos\theta\); see
Corollary~\ref{cor:adelic-unitary-identification}.

The coefficients in these expansions are computed from a finite matrix.  If
the core has \(d\) vertices, \(H_C\) is its \(d\times d\) Hermitian matrix,
and \(V:\CC^r\to\CC^d\) records the labelled cusp
attachments, set
\begin{equation}\label{eq:intro-finite-matrix}
 x(\zeta)=a(\zeta+\zeta^{-1}),
 \qquad
 F(\zeta)=H_C-x(\zeta)I_d+\frac{\zeta^{-1}}aVV^*.
\end{equation}
For \(\zeta=q^{s-1/2}\), the Jacobi-normalized scattering matrix is
\begin{equation}\label{eq:intro-S}
 S(\zeta)=-I_r+\frac{\zeta^{-1}-\zeta}{a}
 V^*F(\zeta)^{-1}V.
\end{equation}
Theorem~\ref{thm:normalization-dictionary} converts this matrix into the
classical constant-term matrix.  Theorem~\ref{thm:resolvent-jump} derives
\eqref{eq:intro-standard-resolution} directly from the jump of the resolvent,
and Theorem~\ref{thm:automorphic-plancherel} gives
\eqref{eq:intro-automorphic-resolution}.

For cusp-only graphs of groups, \(F\) is exactly \(2\sqrt q\) times the finite
resonance matrix of Arends, Peterson, and Weich after stabilizer conjugation
\cite{ArendsPetersonWeich}.  The labelled attachment map recovers the full
constant-term matrix, while core-supported eigenfactors cancel from its
determinant.  The \(\Gamma_0(T)\) and four-cusp elliptic examples make these
coefficient-level and Plancherel consequences explicit.

Figure~\ref{fig:proof-architecture} summarizes the passage from the
arithmetic quotient to the two principal spectral consequences.

\begin{figure}[htbp]
\centering
\begin{tikzpicture}[
 flowbox/.style={draw,rounded corners=2pt,fill=black!3,
   align=center,inner sep=4.5pt,font=\small},
 flowarrow/.style={-{Latex[length=2.2mm]},semithick},
 arrowlabel/.style={font=\scriptsize,fill=white,inner sep=1.2pt}
]
\node[flowbox,text width=.58\textwidth] (quotient)
 {Arithmetic quotient\\[-1pt]
  \(L^2(Y,\lambda)\) with adjacency operator \(A\)};
\node[flowbox,text width=.70\textwidth,below=5mm of quotient] (jacobi)
 {
  Stabilizer conjugation \(U{\,=\,}\lambda^{1/2}\)\\[-1pt]
  finite core \(H_C\), labelled attachments \(V\), and \(r\) free Jacobi tails};
\node[flowbox,text width=.56\textwidth,below=5mm of jacobi] (finite)
 {Finite Schur complement\\[-1pt]
  \(F(\zeta)\), scattering matrix \(S(\zeta)\), and generalized eigenfunctions
  \(\Psi(\zeta)\)};
\node[flowbox,text width=.30\textwidth,below=8mm of finite,
      xshift=-.185\textwidth] (jump)
 {Resolvent jump\\[-1pt]
  unitary spectral transform and Plancherel resolution};
\node[flowbox,text width=.30\textwidth,below=8mm of finite,
      xshift=.185\textwidth] (green)
 {Discrete Green identity\\[-1pt]
  matrix-valued Maass--Selberg relation};
\coordinate (lowercenter) at ($(jump.south)!0.5!(green.south)$);
\node[flowbox,text width=.70\textwidth,below=8mm of lowercenter] (automorphic)
 {Automorphic normalization\\[-1pt]
  cusp-width matrix \(W\), constant-term-normalized family \(\mathcal E_s\),
  and exact spectral decomposition};

\draw[flowarrow] (quotient) -- node[arrowlabel,right=1mm] {\(U\)} (jacobi);
\draw[flowarrow] (jacobi) -- node[arrowlabel,right=1mm] {finite reduction} (finite);
\draw[flowarrow] (finite.south) -- ++(0,-3mm) -| (jump.north);
\draw[flowarrow] (finite.south) -- ++(0,-3mm) -| (green.north);
\draw[flowarrow] (jump.south) -- ($(automorphic.north)+(-.18\textwidth,0)$);
\draw[flowarrow] (green.south) -- ($(automorphic.north)+(.18\textwidth,0)$);
\end{tikzpicture}
\caption{Logical architecture of the paper.  The finite Schur complement
supplies both the scattering states and their coefficient matrix.  Its
resolvent jump yields the Plancherel resolution, while the discrete Green
identity yields the matrix-valued Maass--Selberg relation; the final row
restores the automorphic normalization.}
\label{fig:proof-architecture}
\end{figure}

\subsection{Earlier work and precise scope}\label{subsec:intro-related-work}

The archimedean automorphic scattering paradigm linking Eisenstein series,
scattering matrices, and spectral resolution is developed by Lax and Phillips
\cite{LaxPhillips}.  In the function-field graph setting, Efrat studied
spectral deformations of automorphic functions on graphs of groups
\cite{EfratSpectralDeformations} and later computed the discrete and continuous
spectrum of the Nagao ray, including generalized eigenfunctions, Eisenstein
parameters, and a Plancherel formula \cite{EfratAutomorphicSpectra}.  Nagoshi
also described the discrete--continuous \(L^2\)-decomposition of arithmetic
infinite graphs and the Eisenstein series furnishing their continuous spectrum
\cite[Sec.~2]{NagoshiSpectra}.

Paulin developed an intrinsic geometric framework for non-uniform tree
lattices through quotient graphs of groups and their geodesic flow
\cite{PaulinGeometricallyFinite}.  From the closed-geodesic viewpoint,
Deitmar--Kang and Hong--Kwon studied zeta functions, determinants, and
geodesic counting on noncompact tree quotients
\cite{DeitmarKang,HongKwonZeta}.  Those geometric zeta invariants are distinct
from the Eisenstein coefficient matrices considered here.

Finite-tail scattering matrices, their unitarity, and a resolution of the
identity already appear in Varbanov and Brun
\cite[especially Eq.~(32)]{VarbanovBrun}.  Colin de Verdi\`ere and Truc give an
explicit scattering-theoretic spectral decomposition for graphs that are
regular trees outside a finite set \cite{ColinTruc}.  Their branching-tree
ends differ from the stabilizer-weighted cusp half-lines considered here, but
their work is an important spectral antecedent.  Chekhov relates scattering
determinants on \(p\)-adic graphs to finite determinants and Ihara--Selberg
\(L\)-functions and also analyzes discrete spectral factors
\cite{Chekhov,ChekhovSurvey}.  Accordingly, neither a finite-tail scattering
formula nor a reciprocal determinant identity is claimed as new here.

The Nagao calculation in Subsection~\ref{subsec:nagao-calibration} calibrates
the present normalizations.  In Efrat's measure, the \((0,0)\)-entry of the
identity kernel gives the coefficient \(2/\pi\), replacing the printed
factor \(2\pi\) in \cite[Thm.~5.3, Eq.~(7)]{EfratAutomorphicSpectra}; the
short check is recorded there.

Matrices of the form \(iS^*\partial_\theta S\) are classical in scattering
theory \cite{SmithLifetime,ReedSimonIII,Yafaev}, as is the existence of the
automorphic spectral decomposition.  The contribution here is their
normalization-exact identification in the stabilizer-weighted arithmetic
setting: the labelled coefficient matrix, the exact cusp-width Plancherel
measure, the unitary fixed-level Eisenstein transform, and the automorphic
matrix Green--Maass--Selberg identity.  Arends, Peterson, and Weich provide
the meromorphic resolvent, resonant-state theory, finite resonance matrix, and
explicit resonance sets \cite[Secs.~1 and~7.8]{ArendsPetersonWeich}; the
present paper takes these finite resonance data as input and derives the
complementary unitary and coefficient-level consequences.

\subsection{Organization and terminology}\label{subsec:intro-organization}

Section~\ref{sec:finite-core} proves the general
scattering, Green--Maass--Selberg, and unitary Plancherel formulas and returns
them to automorphic notation.
Section~\ref{sec:cusp-invisible} separates the core-supported point spectrum.
Section~\ref{sec:resonance-comparison} compares the finite-core matrix with
the resonance matrix of Arends, Peterson, and Weich.
Section~\ref{sec:examples} treats the Nagao quotient, \(\Gamma_0(T)\), and the
four-cusp elliptic quotient.  The final section records the scope.

The inner product is linear in the first variable.  A \emph{cusp channel}
means only the homogeneous half-line remaining after a finite initial segment
of a cusp has been absorbed into the core.  The symbols \(A\) and
\(J=UAU^{-1}\) denote the adjacency operator in stabilizer measure and its
standard \(\ell^2\) realization.  We reserve \(a=\sqrt q\) for the free Jacobi
coefficient, \(\zeta=q^{s-1/2}\) for the multiplicative spectral parameter,
\(W\) for the cusp-width matrix, and \(P_{\mathrm{ac}}\) for the absolutely
continuous spectral projection.  The symbols
\(E_s^{\mathrm{hol}}\) and \(\mathcal E_s\) denote the multi-cusp
holomorphic and constant-term-normalized Eisenstein families, respectively.
For vectors or generalized eigenfunctions \(v,w\), the symbol \(vw^*\)
denotes the rank-one sesquilinear form
\(f\mapsto\langle f,w\rangle v\).  Expressions involving generalized
eigenfunctions are first interpreted on finitely supported vectors and are
then extended in the stated \(L^2\) sense.
For a positive integer \(m\), we write \(M_m(\mathbb C)\) for the
\(m\times m\) complex matrices,
\(v^t\) for transpose, \(\operatorname{ran}\) for range, and a dot over a
matrix depending on \(\theta\) for \(\partial_\theta\).

\section{Finite-core reduction and Plancherel decomposition}\label{sec:finite-core}

We now prove the general spectral decomposition.  Passing first to standard
\(\ell^2\) turns the homogeneous parts of the cusps into free Jacobi
half-lines.  The finite core then determines both the Eisenstein coefficients
and the spectral measure, without hidden diagonal factors.

\begin{setup}[Quotient measure, adjacency, and cusp widths]
\label{setup:quotient-adjacency}
Put \(G=\mathrm{PGL}_2(K)\),
\(\mathcal O=\mathbb F_q[\![t^{-1}]\!]\), and
\(\mathcal K=\mathrm{PGL}_2(\mathcal O)\).  Identify
\(V(\mathcal T)=G/\mathcal K\), normalize Haar measure by
\(\operatorname{vol}(\mathcal K)=1\), and write
\(Y=\Gamma\backslash\mathcal T\).  For a quotient vertex \(v\), choose a
lift \(\widetilde v\) and set
\[
 \Gamma_v=\operatorname{Stab}_\Gamma(\widetilde v),
 \qquad \lambda(v)=|\Gamma_v|^{-1}.
\]
Then
\[
 L^2(\Gamma\backslash G/\mathcal K)\simeq L^2(Y,\lambda),
 \qquad
 \|f\|^2=\sum_{v\in V(Y)}|f(v)|^2\lambda(v).
\]
Let \(\mathscr E_v\) represent the \(\Gamma_v\)-orbits of oriented tree
edges issuing from \(\widetilde v\).  For \(e\in\mathscr E_v\), let
\(\Gamma_e=\operatorname{Stab}_{\Gamma_v}(e)\) and let \(t(e)\) be its
terminal quotient vertex.  The quotient adjacency operator is
\begin{equation}\label{eq:quotient-adjacency}
 (Af)(v)=\sum_{e\in\mathscr E_v}[\Gamma_v:\Gamma_e]f(t(e)).
\end{equation}
This also gives quotient loops their directed multiplicities.  If \(\bar e\)
is the reversed edge orbit, orbit--stabilizer gives
\[
 \lambda(v)[\Gamma_v:\Gamma_e]
 =|\Gamma_e|^{-1}
 =\lambda(t(e))[\Gamma_{t(e)}:\Gamma_{\bar e}].
\]
Since the directed indices from each vertex sum to \(q+1\), the weighted
Schur test makes \(A\) a bounded self-adjoint operator with
\(\|A\|\le q+1\).

Cut every cusp in its homogeneous part and number the retained vertices by
\((c,n)\), \(n\ge1\), increasing away from the core.  Its width relative to
this height origin is
\begin{equation}\label{eq:cusp-width-definition}
 w_c=\frac{q^n\lambda(c,n)}{q+1},
 \qquad\text{equivalently}\qquad
 \lambda(c,n)=w_c(q+1)q^{-n}.
\end{equation}
Homogeneity makes \(w_c\) independent of \(n\); its covariance under a
deeper cut is recorded in Remark~\ref{rem:height-shift}.
\end{setup}

\subsection{Standard Jacobi realization}\label{subsec:standard-channel-model}

Let
\[
        \mathscr H=\CC^d\oplus\bigoplus_{c=1}^r\ell^2(\NN),
        \qquad \NN=\{1,2,\ldots\},
\]
where \(d,r\ge1\), and write \(e_c\) for the \(c\)-th standard vector of \(\CC^r\).
Let \(J_0^{(r)}\) be the orthogonal sum of \(r\) copies of
\[
        (J_0g)(1)=ag(2),\qquad
        (J_0g)(n)=a\{g(n-1)+g(n+1)\}\quad(n\ge2),
        \qquad a=\sqrt q.
\]
We use the standard half-line Jacobi normalization; see, for example,
\cite{Teschl}.
Write \(\Gamma_1:\CC^r\to\bigoplus_{c=1}^r\ell^2(\NN)\) for insertion at the first vertices,
\[
        (\Gamma_1\alpha)_c(n)=\delta_{n1}\alpha_c.
\]
Let \(H_C=H_C^*\in M_d(\CC)\), and let \(V:\CC^r\to\CC^d\).  The coupled operator is
\begin{equation}\label{eq:block-J}
        J=
        \begin{pmatrix}
        H_C&V\Gamma_1^*\\
        \Gamma_1V^*&J_0^{(r)}
        \end{pmatrix}.
\end{equation}
Thus, if \(u\in\CC^d\) and \(g=(g_c)_{c=1}^r\), then
\[
        J(u,g)=\bigl(H_Cu+Vg(1),\,J_0^{(r)}g+\Gamma_1V^*u\bigr).
\]
Here and below, \(g(1)=(g_1(1),\ldots,g_r(1))^t\).

Figure~\ref{fig:finite-core-channels} makes the role of the attachment map
and the cusp labels explicit.

\begin{figure}[htbp]
\centering
\begin{tikzpicture}[
 core/.style={draw,rounded corners=3pt,fill=black!5,
   minimum width=25mm,minimum height=24mm,align=center,font=\small},
 tail/.style={circle,draw,fill=white,minimum size=9mm,inner sep=1pt,
   font=\scriptsize},
 edge/.style={semithick},
 edgelabel/.style={font=\scriptsize,fill=white,inner sep=1pt}
]
\node[core] (core) at (-3.1,0) {finite core\\\(H_C\)};

\node[tail] (c11) at (-.35,1.2) {\((1,1)\)};
\node[tail] (c12) at (1.25,1.2) {\((1,2)\)};
\node (dots1) at (2.65,1.2) {\(\cdots\)};

\node[tail] (c21) at (-.35,0) {\((2,1)\)};
\node[tail] (c22) at (1.25,0) {\((2,2)\)};
\node (dots2) at (2.65,0) {\(\cdots\)};

\node[tail] (cr1) at (-.35,-1.2) {\((r,1)\)};
\node[tail] (cr2) at (1.25,-1.2) {\((r,2)\)};
\node (dotsr) at (2.65,-1.2) {\(\cdots\)};

\draw[edge] (core.north east) -- node[edgelabel,above] {\(Ve_1\)} (c11.west);
\draw[edge] (core.east) -- node[edgelabel,above] {\(Ve_2\)} (c21.west);
\draw[edge] (core.south east) -- node[edgelabel,below] {\(Ve_r\)} (cr1.west);

\draw[edge] (c11) -- node[edgelabel,above] {\(a\)} (c12);
\draw[densely dotted,semithick] (c12) -- (dots1);
\draw[edge] (c21) -- node[edgelabel,above] {\(a\)} (c22);
\draw[densely dotted,semithick] (c22) -- (dots2);
\draw[edge] (cr1) -- node[edgelabel,above] {\(a\)} (cr2);
\draw[densely dotted,semithick] (cr2) -- (dotsr);

\node[font=\small,align=center] at (0,-2.0)
 {\(a=\sqrt q\),\qquad
  \(\lambda(c,n)=w_c(q+1)q^{-n}\) before conjugation};
\end{tikzpicture}
\caption{The standard-\(\ell^2\) finite-core model.  The column \(Ve_c\)
couples the labelled channel \(c\) to the core, and every remaining cusp tail
has the free Jacobi coefficient \(a=\sqrt q\).  Intermediate channels between
\(2\) and \(r\) are omitted from the drawing.}
\label{fig:finite-core-channels}
\end{figure}
\FloatBarrier

\begin{proposition}[Unitary normal form]\label{prop:unitary-normal-form}
Let a stabilizer-weighted graph consist of a finite part and \(r\) homogeneous cuspidal rays.  Suppose that, after a finite truncation, the \(c\)-th ray has
\[
        (Af)(c,n)=qf(c,n-1)+f(c,n+1),
        \qquad
        \lambda(c,n)=w_c(q+1)q^{-n}.
\]
Then \(U f=\lambda^{1/2}f\) conjugates \(A\) to an operator of the form
\eqref{eq:block-J}.  All non-homogeneous initial cusp vertices may be absorbed
into \(H_C\), and the remaining attachments to the homogeneous cusp rays are
contained in \(V\).
\end{proposition}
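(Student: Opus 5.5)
We compute the matrix of \(J=UAU^{-1}\) directly in the orthonormal basis \(\delta_v/\sqrt{\lambda(v)}\)-transported to standard \(\ell^2\).

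\emph{Step 1: symmetry of \(J\).} Since \(Uf=\lambda^{1/2}f\), we have \((Jg)(v)=\lambda(v)^{1/2}\sum_{e\in\mathscr E_v}[\Gamma_v:\Gamma_e]\lambda(t(e))^{-1/2}g(t(e))\). The matrix entry \(J_{v,w}\) is therefore \(\lambda(v)^{1/2}\lambda(w)^{-1/2}\) times the total directed index from \(v\) to \(w\). The orbit--stabilizer identity in Setup~\ref{setup:quotient-adjacency}, \(\lambda(v)[\Gamma_v:\Gamma_e]=\lambda(t(e))[\Gamma_{t(e)}:\Gamma_{\bar e}]\), turns this into
\[
J_{v,w}=\sum_{e:\,v\to w}\lambda(v)^{1/2}\lambda(w)^{1/2}|\Gamma_e|^{-1}\cdot\lambda(v)^{-1}\lambda(w)^{-1}\cdot\lambda(v)\ldots
\]
More cleanly, \(J_{v,w}=\sum_{e:v\to w}|\Gamma_e|^{-1}(\lambda(v)\lambda(w))^{-1/2}\), which is real and symmetric in \((v,w)\) once edge orbits are paired with their reversals. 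Thus \(J\) is a real symmetric, bounded operator on \(\ell^2(V(Y))\), unitarily equivalent to \(A\).

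\emph{Step 2: the tails.} Fix the truncation height so that for \(n\ge1\) the displayed homogeneous recursion holds at \((c,n)\), meaning the neighbours of \((c,n)\) are only \((c,n\pm1)\), with indices \(q\) downward and \(1\) upward. For \(n\ge2\) we get
\[
J_{(c,n),(c,n+1)}=\lambda(c,n)^{1/2}\lambda(c,n+1)^{-1/2}\cdot1=q^{1/2},
\]
and symmetrically \(J_{(c,n),(c,n-1)}=q\cdot q^{-1/2}=\sqrt q\). The diagonal entries vanish, and there are no other entries. The width \(w_c\) cancels in these ratios, which is exactly why the form of \(\lambda(c,n)\) is required. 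Hence on \(\bigoplus_c\ell^2(\NN)\), away from the first vertices, \(J\) agrees with \(J_0^{(r)}\) with \(a=\sqrt q\).

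\emph{Step 3: the coupling and the core.} Let the finite set \(C\) consist of the finite part together with all truncated initial cusp vertices; put \(d=|C|\) and set \(H_C=(J_{v,w})_{v,w\in C}\), which is Hermitian by Step~1. The only edges between \(C\) and the tails go from \((c,1)\) to its lower neighbour, which lies in \(C\) by the homogeneity at \(n=1\). So every tail vertex other than \((c,1)\) has no core neighbour. Define \(V e_c\in\CC^d\) as the column \((J_{v,(c,1)})_{v\in C}\); concretely, this is the single entry \(q\cdot q^{-1/2}\cdot(\lambda(c,0)/\lambda(c,1))^{1/2}\)-type weight at the attaching vertex, and in general it is whatever symmetric entry Step~1 produces. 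The block structure \eqref{eq:block-J} then follows, because \(J_{(c,1),v}=\overline{(Ve_c)_v}\) by symmetry, giving the \(\Gamma_1V^*\) block.

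\emph{Main obstacle.} The delicate point is bookkeeping rather than analysis. The hypothesis "\((Af)(c,n)=qf(c,n-1)+f(c,n+1)\)" at \(n=1\) involves \(f(c,0)\), a core vertex, and the coupling entry must be read off from the actual stabilizer weights. Loops and multiple edge orbits in the core must also be handled, via the directed-multiplicity convention in \eqref{eq:quotient-adjacency}. I would treat this by working entirely with the symmetric formula \(|\Gamma_e|^{-1}(\lambda(v)\lambda(w))^{-1/2}\) from Step~1, which automatically covers loops and multi-edges.
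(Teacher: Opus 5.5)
Your argument is correct and is essentially the paper's: compute $UAU^{-1}$ on the homogeneous rays, where $w_c$ cancels and both couplings become $\sqrt q$, then use symmetry of $J$ to make the core block Hermitian and the two off-diagonal blocks mutually adjoint, with coupling only through the first vertices; the only difference is that you derive that symmetry from the explicit entry formula $J_{v,w}=\sum_{e:\,v\to w}|\Gamma_e|^{-1}(\lambda(v)\lambda(w))^{-1/2}$, whereas the paper invokes the self-adjointness of $A$ from Setup~\ref{setup:quotient-adjacency}. One small slip: your parenthetical ``concrete'' attachment weight in Step~3 is wrong (for the Nagao ray it gives $\sqrt q\,\sqrt{q/(q+1)}$ rather than the correct $V=\sqrt q\,\sqrt{q+1}$); the correct entry is $J_{(c,0),(c,1)}=q\,(\lambda(c,1)/\lambda(c,0))^{1/2}$, but since you define $Ve_c$ as the actual column of $J$, this does not affect the proof.
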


\begin{proof}
On a homogeneous ray,
\[
\begin{aligned}
 (UAU^{-1}g)(c,n)
 &=\lambda(c,n)^{1/2}
   \left(
   q\frac{g(c,n-1)}{\lambda(c,n-1)^{1/2}}
   +\frac{g(c,n+1)}{\lambda(c,n+1)^{1/2}}
   \right)\\
 &=\sqrt q\,g(c,n-1)+\sqrt q\,g(c,n+1).
\end{aligned}
\]
The complement of the homogeneous tails is finite-dimensional.  Self-adjointness makes its compression Hermitian and makes the two off-diagonal blocks adjoints of one another, giving \eqref{eq:block-J}.
\end{proof}

Put
\begin{equation}\label{eq:x-zeta}
        x(\zeta)=a(\zeta+\zeta^{-1}),
        \qquad \zeta\in\CC^\times.
\end{equation}
For \(x=x(\zeta)\), the tail recurrence has the two solutions \(\zeta^n\) and \(\zeta^{-n}\).  We call \(\zeta^n\) incoming and \(\zeta^{-n}\) outgoing.  This convention agrees on \(|\zeta|=1\), with \(\zeta=e^{i\theta}\), with waves directed toward and away from the finite core, respectively.
The two modes coalesce at \(\zeta=\pm1\); these two branch points of
\(\zeta\mapsto x(\zeta)\) are called the \emph{threshold parameters}.

\subsection{Scattering coefficients from the finite core}\label{subsec:finite-scattering}

\begin{definition}[Finite outgoing matrix and determinantal parameters]\label{def:finite-outgoing-matrix}
For the standard model \eqref{eq:block-J}, define
\begin{equation}\label{eq:F-zeta}
        F(\zeta)
        =H_C-x(\zeta)I_d+\frac{\zeta^{-1}}aVV^*.
\end{equation}
The resolvent family \((J-x(\zeta))^{-1}\) is initially defined for
\(|\zeta|\) sufficiently large.  It extends meromorphically throughout the
branch \(|\zeta|>1\) selected by the square-summable mode \(\zeta^{-n}\), and
then in \(\zeta\in\mathbb C^\times\).
The matrix
\(F(\zeta)\) is the Schur complement obtained by imposing the outgoing
solution \(\zeta^{-n}\) on each cusp.  Its determinant also records the
finite-core resonance parameters, with the threshold points treated
separately.
For \(\zeta_0\in\mathbb C^\times\setminus\{\pm1\}\), we call
\(\zeta_0\) a \emph{determinantal resonance parameter} when
\(\det F(\zeta_0)=0\), and call
\(\operatorname{ord}_{\zeta_0}\det F\) its determinantal order.  In the
cusp-only graph-of-groups setting, Theorem~\ref{thm:finite-resonance-comparison}
identifies these parameters with the resonances of
Arends, Peterson, and Weich.  On the open unit-circle arc
\(\zeta=e^{i\theta}\), set
\begin{equation}\label{eq:singular-unitary-set}
 \Theta_F=\{\theta\in(0,\pi):\det F(e^{i\theta})=0\}.
\end{equation}
This is a finite set.  Values of meromorphic families at removable points of
\(\Theta_F\) are always understood by continuation.
\end{definition}

\begin{lemma}[Outgoing Schur complement]\label{lem:exact-schur-complement}
For \(|\zeta|>1\),
\[
        \Gamma_1^*(J_0^{(r)}-x(\zeta))^{-1}\Gamma_1
        =-\frac{\zeta^{-1}}aI_r,
\]
and the core--core block of \((J-x(\zeta))^{-1}\) is \(F(\zeta)^{-1}\).  These identities extend meromorphically wherever the indicated inverses exist.
\end{lemma}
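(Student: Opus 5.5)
The plan is to compute the half-line Green function at the first vertex directly, and then apply the Schur complement (Feshbach) formula to the block operator \eqref{eq:block-J}.

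\emph{Step 1: the free half-line.} Since \(J_0^{(r)}\) is an orthogonal sum of \(r\) identical copies of \(J_0\), it suffices to show that \(\langle (J_0-x(\zeta))^{-1}\delta_1,\delta_1\rangle=-\zeta^{-1}/a\) for \(|\zeta|>1\). For such \(\zeta\), the spectrum of \(J_0\) is \([-2a,2a]\), while \(x(\zeta)\) lies outside it, so the inverse exists. Write \(g=(J_0-x)^{-1}\delta_1\). For \(n\ge2\), the vector \(g\) solves the homogeneous recurrence \(a g(n-1)+a g(n+1)=x g(n)\), and it is square-summable. Hence \(g(n)=C\zeta^{-n}\), and the formula extends down to \(n=1\) because the recurrence at \(n=2\) involves \(g(1)\). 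The first row gives \(a g(2)-x g(1)=1\), that is,
\[
C\bigl(a\zeta^{-2}-a(\zeta+\zeta^{-1})\zeta^{-1}\bigr)=1 .
\]
This simplifies to \(C(-a)=1\), so \(g(1)=C\zeta^{-1}=-\zeta^{-1}/a\). Consequently \(\Gamma_1^*(J_0^{(r)}-x)^{-1}\Gamma_1=-\frac{\zeta^{-1}}aI_r\). As an alternative check, one could use the classical Stieltjes transform of the semicircle law, but the direct recurrence argument is cleaner.

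\emph{Step 2: the Schur complement.} For a block operator \(\begin{pmatrix}A&B\\B^*&D\end{pmatrix}\) with \(D\) invertible, the inverse exists if and only if \(A-BD^{-1}B^*\) is invertible. In that case the upper-left block of the inverse is \((A-BD^{-1}B^*)^{-1}\). Here \(A=H_C-x\), \(B=V\Gamma_1^*\) and \(D=J_0^{(r)}-x\). Step 1 gives
\[
BD^{-1}B^*=V\bigl(\Gamma_1^*D^{-1}\Gamma_1\bigr)V^*=-\frac{\zeta^{-1}}aVV^* .
\]
The complement is therefore \(H_C-x+\frac{\zeta^{-1}}aVV^*=F(\zeta)\). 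When \(|\zeta|\) is large, \(x\notin\operatorname{spec}J\), so \(J-x\) is invertible. The Schur formula then shows that \(F(\zeta)\) is invertible, and the core--core block of \((J-x)^{-1}\) equals \(F(\zeta)^{-1}\). Conversely, wherever \(F(\zeta)\) is invertible with \(|\zeta|>1\), the full inverse can be written down explicitly. Its off-diagonal blocks are \(-D^{-1}B^*F^{-1}\) and \(-F^{-1}BD^{-1}\), and its lower-right block is \(D^{-1}+D^{-1}B^*F^{-1}BD^{-1}\).

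\emph{Step 3: meromorphic extension.} The only delicate point is the meaning of ``extend meromorphically''. The entries \(\zeta^{-n}\) and \(\zeta^{-1}/a\) are Laurent polynomials in \(\zeta\), and \(F(\zeta)\) is a matrix of Laurent polynomials. Hence \(\det F\) is not identically zero, since it is nonzero for large \(|\zeta|\), and \(F^{-1}\) is meromorphic on \(\mathbb C^\times\). The scalar identity of Step 1 then continues trivially. The identity for the core block continues by the uniqueness of meromorphic continuation from the region of large \(|\zeta|\). This region lies inside the connected domain \(\{|\zeta|>1\}\), and from there the continuation proceeds to all of \(\mathbb C^\times\). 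The continuation is understood matrix-element-wise on finitely supported vectors, using the explicit block formula for the inverse.
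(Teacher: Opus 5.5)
Your proposal is correct and follows essentially the same route as the paper: compute the first-vertex Green function of the free half-line, then take the Schur complement of the tail block in $J-x(\zeta)$ and apply the block-inverse formula. The only difference is cosmetic. You obtain $-\zeta^{-1}/a$ by solving $(J_0-x)g=\delta_1$ with the decaying mode $\zeta^{-n}$, whereas the paper solves the Weyl-function equation $m=1/(-x-a^2m)$ and selects the root with $m\sim-1/x$; your version is exactly the ``purely outgoing solution'' viewpoint the paper records right after its proof.
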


\begin{proof}
The square-summable root of the free tail recurrence is \(\zeta^{-1}\).
The first-vertex Weyl function
\[
 m(z)=\langle\delta_1,(J_0-z)^{-1}\delta_1\rangle
\]
where \(\delta_1\) is the first standard basis vector of \(\ell^2(\mathbb N)\).
It satisfies, at \(z=x(\zeta)\),
\[
        m=\frac1{-x(\zeta)-a^2m},
\]
and the solution with \(m\sim-1/x\) is \(m=-\zeta^{-1}/a\).  Taking the Schur complement of the tail block in \(J-x(\zeta)\) now gives
\[
\begin{aligned}
 H_C-x(\zeta)I_d
 -V\Gamma_1^*(J_0^{(r)}-x(\zeta))^{-1}\Gamma_1V^*
 &=H_C-x(\zeta)I_d+\frac{\zeta^{-1}}aVV^*\\
 &=F(\zeta).
\end{aligned}
\]
The block inverse formula proves the assertion.
\end{proof}

Equivalently, a purely outgoing solution has \(g(n)=\zeta^{-n}\beta\).  The first tail equation gives \(V^*u=a\beta\), and the core equation becomes \(F(\zeta)u=0\).  This explains directly why the coefficient in \eqref{eq:F-zeta} is \(\zeta^{-1}/a\), rather than an unspecified Weyl function.

\begin{theorem}[Scattering matrix from the finite core]\label{thm:exact-scattering}
Assume that \(F(\zeta)\) is invertible and \(\zeta\ne\pm1\).  For every incoming amplitude \(\alpha\in\CC^r\), there is a unique generalized eigenfunction
\[
        \Psi(\zeta)\alpha=(u_\alpha,g_\alpha)
\]
of \(J\) at \(x(\zeta)\), with
\begin{equation}\label{eq:standard-scattering-asymptotic}
        g_\alpha(n)=\zeta^n\alpha+\zeta^{-n}S(\zeta)\alpha.
\end{equation}
It is given by
\begin{align}
        u_\alpha
        &=(\zeta^{-1}-\zeta)F(\zeta)^{-1}V\alpha,
        \label{eq:core-poisson}\\
        S(\zeta)
        &=-I_r+\frac{\zeta^{-1}-\zeta}{a}
        V^*F(\zeta)^{-1}V.
        \label{eq:S-from-F}
\end{align}
Consequently \(\Psi(\zeta)\) and \(S(\zeta)\) are meromorphic matrix-valued functions of \(\zeta\).
\end{theorem}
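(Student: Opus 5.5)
The plan is to write the eigenvalue equation $(J-x(\zeta))(u,g)=0$ componentwise and solve it directly, in the spirit of the remark after Lemma~\ref{lem:exact-schur-complement}. Throughout, $x=x(\zeta)=a(\zeta+\zeta^{-1})$.

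\textbf{Step 1: the tail.} For $n\ge2$ the tail equations read $a\{g(n-1)+g(n+1)\}=x\,g(n)$. The solutions of this recurrence are the combinations $\zeta^n\alpha'+\zeta^{-n}\beta$. Since $\zeta\neq\pm1$, these two modes are linearly independent, so every generalized eigenfunction has tail of this form, and we normalize $\alpha'=\alpha$. This forces the ansatz $g(n)=\zeta^n\alpha+\zeta^{-n}\beta$ for all $n\ge1$. Indeed, the recurrence for $n\ge2$ determines $g(1),g(2),\dots$ from any two consecutive values, and the two-parameter family covers all of them.

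\textbf{Step 2: the first tail vertex.} The remaining tail equation is $ag(2)+V^*u=x\,g(1)$. Extend the recurrence formally to $n=1$ by defining $g(0)=\alpha+\beta$. Then $x\,g(1)=a\{g(0)+g(2)\}$, and the first-vertex equation collapses to
\[
V^*u=a(\alpha+\beta).
\]

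\textbf{Step 3: the core.} The core equation is $(H_C-x)u+Vg(1)=0$, with $g(1)=\zeta\alpha+\zeta^{-1}\beta$. Substitute $\beta=a^{-1}V^*u-\alpha$. The $\beta$-term produces $\frac{\zeta^{-1}}{a}VV^*u$, so the equation becomes
\[
F(\zeta)u+(\zeta-\zeta^{-1})V\alpha=0.
\]
Invertibility of $F(\zeta)$ then yields \eqref{eq:core-poisson} uniquely. Inserting this $u$ into $\beta=a^{-1}V^*u-\alpha$ gives $\beta=S(\zeta)\alpha$ with $S$ as in \eqref{eq:S-from-F}.

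\textbf{Step 4: uniqueness.} Suppose two solutions share the same incoming amplitude $\alpha$. Their difference has incoming amplitude $0$, so its tail is a pure outgoing wave $\zeta^{-n}\beta$. Steps 2 and 3 then give $F(\zeta)u=0$, hence $u=0$, and so $\beta=a^{-1}V^*u=0$.

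\textbf{Step 5: meromorphy.} This follows because $F(\zeta)$ is a matrix polynomial in $\zeta^{\pm1}$. Its determinant is not identically zero: for $|\zeta|\to\infty$ the term $-x(\zeta)I_d$ dominates. Hence $F(\zeta)^{-1}$ is meromorphic on $\CC^\times$, and therefore so are $\Psi(\zeta)$ and $S(\zeta)$.

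The only delicate point is the bookkeeping in Step 2: using the $n=1$ equation with $J_0g(1)=ag(2)$, rather than the full recurrence, is exactly what produces the boundary term. I expect no real obstacle beyond checking the signs of $\zeta-\zeta^{-1}$.
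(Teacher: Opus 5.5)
Your proposal is correct and follows the paper's proof essentially line by line. Both write the tail as $g(n)=\zeta^n\alpha+\zeta^{-n}\beta$, read off the boundary relation $V^*u=a(\alpha+\beta)$ at the first tail vertex, and eliminate $\beta$ in the core equation to obtain $F(\zeta)u=(\zeta^{-1}-\zeta)V\alpha$. Your Steps 1, 4 and 5 make explicit three points the paper leaves implicit: that the two-mode ansatz covers all solutions when $\zeta\ne\pm1$, uniqueness, and $\det F\not\equiv0$ because $F(\zeta)$ is dominated by $-a\zeta I_d$ as $|\zeta|\to\infty$.
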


\begin{proof}
Write the outgoing amplitude as \(\beta\), so that
\[
        g(n)=\zeta^n\alpha+\zeta^{-n}\beta.
\]
The eigenvalue equation at the first vertex of every tail is
\[
        ag(2)+V^*u=x(\zeta)g(1).
\]
Using \eqref{eq:x-zeta} gives the exact boundary relation
\begin{equation}\label{eq:boundary-relation}
        V^*u=a(\alpha+\beta).
\end{equation}
The core equation is
\[
        (H_C-x(\zeta)I)u+V(\zeta\alpha+\zeta^{-1}\beta)=0.
\]
Eliminating \(\beta=a^{-1}V^*u-\alpha\) by \eqref{eq:boundary-relation} yields
\[
        F(\zeta)u=(\zeta^{-1}-\zeta)V\alpha.
\]
This proves \eqref{eq:core-poisson}; substituting it back into
\(\beta=a^{-1}V^*u-\alpha\) proves \eqref{eq:S-from-F}.  All entries are rational functions of \(\zeta\) and the entries of \(F(\zeta)^{-1}\), proving meromorphicity.
\end{proof}

\begin{corollary}[Functional equation and unitarity]\label{cor:S-functional-unitary}
As meromorphic identities,
\begin{equation}\label{eq:S-functional}
        S(\zeta^{-1})S(\zeta)=I_r.
\end{equation}
For \(\zeta=e^{i\theta}\) with
\(\theta\in(0,\pi)\setminus\Theta_F\),
\begin{equation}\label{eq:S-unitary}
        S(\zeta)^*=S(\zeta)^{-1}=S(\zeta^{-1}).
\end{equation}
\end{corollary}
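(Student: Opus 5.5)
The functional equation will come from the symmetry $x(\zeta)=x(\zeta^{-1})$: at a fixed spectral value, the solutions at $\zeta$ and at $\zeta^{-1}$ solve the same equation. Unitarity will then follow from the fact that $J$ has real entries, together with $\bar\zeta=\zeta^{-1}$ on the unit circle.

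\emph{Functional equation.} Fix $\zeta$ with $\zeta\ne\pm1$ and with $F(\zeta)$ and $F(\zeta^{-1})$ both invertible. By Theorem~\ref{thm:exact-scattering}, the vector $\Psi(\zeta)\alpha$ is a generalized eigenfunction of $J$ at $x(\zeta)$ whose tail is
\[
 g(n)=\zeta^n\alpha+\zeta^{-n}S(\zeta)\alpha .
\]
Read at the parameter $\zeta^{-1}$, this has incoming amplitude $S(\zeta)\alpha$ and outgoing amplitude $\alpha$. By the uniqueness clause of Theorem~\ref{thm:exact-scattering} at $\zeta^{-1}$, a generalized eigenfunction is determined by its incoming amplitude. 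Hence $\Psi(\zeta)\alpha=\Psi(\zeta^{-1})S(\zeta)\alpha$, and comparing outgoing amplitudes gives $S(\zeta^{-1})S(\zeta)\alpha=\alpha$.

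The uniqueness I am using is already implicit in the proof of Theorem~\ref{thm:exact-scattering}. The core equation $F(\zeta^{-1})u=(\zeta-\zeta^{-1})V\alpha'$ determines $u$, and then $\beta=a^{-1}V^*u-\alpha'$. The identity $S(\zeta^{-1})S(\zeta)=I_r$ holds off a discrete set, so it holds as a meromorphic identity. This needs $\det F(\zeta)$ and $\det F(\zeta^{-1})$ not identically zero; both are polynomials in $\zeta^{\pm1}$ whose leading behaviour as $|\zeta|\to\infty$ is $(-a\zeta)^d$.

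\emph{Unitarity.} Two ingredients are needed.
\begin{itemize}[leftmargin=*]
\item \emph{Reality.} The entries of $H_C$ and $V$ come from the real, nonnegative adjacency weights conjugated by the positive diagonal $U$, so they are real. Then $\overline{F(\zeta)}=F(\bar\zeta)$ and $\overline{S(\zeta)}=S(\bar\zeta)$. In the abstract model with complex $H_C$ and $V$, the same identity holds with the entrywise conjugate replaced by the adjoint, since $F(\zeta)^*=F(\bar\zeta)$.
\item \emph{Symmetry.} I also need $S(\zeta)^t=S(\zeta)$. From \eqref{eq:S-from-F}, this follows because $F(\zeta)$ is symmetric when $H_C$ is real symmetric. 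In the general Hermitian case I would instead aim directly for $S(\zeta)^*=S(\bar\zeta)$, using $F(\zeta)^*=F(\bar\zeta)$:
\[
 S(\zeta)^*=-I_r+\frac{\bar\zeta^{-1}-\bar\zeta}{a}V^*F(\bar\zeta)^{-1}V=S(\bar\zeta).
\]
\end{itemize}
The second route is cleaner and needs no reality assumption. For $\zeta=e^{i\theta}$ we have $\bar\zeta=\zeta^{-1}$, so $S(\zeta)^*=S(\zeta^{-1})=S(\zeta)^{-1}$ by the functional equation.

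\emph{Main obstacle: the invertibility hypotheses on the unit circle.} On the circle, $F(\zeta^{-1})=F(\bar\zeta)=F(\zeta)^*$, so $F(\zeta^{-1})$ is invertible exactly when $F(\zeta)$ is. This is guaranteed by $\theta\notin\Theta_F$. The meromorphic identity then specializes pointwise, with no cancellation of poles needed.

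Alternatively, unitarity can be checked independently by a Wronskian (flux) argument. For $x$ real, apply the discrete Green identity to $\Psi\alpha$ and $\Psi\alpha'$ truncated at large $N$. The boundary flux is
\[
 a\bigl(\bar g(N)g'(N+1)-\bar g(N+1)g'(N)\bigr),
\]
and it equals a multiple of $\sin\theta\,(\langle\alpha,\alpha'\rangle-\langle S\alpha,S\alpha'\rangle)$. Because $\sin\theta\neq0$ on $(0,\pi)$, this gives $S^*S=I_r$. I would record this only as a remark, since it foreshadows the Green identity used later for the Maass--Selberg relation.
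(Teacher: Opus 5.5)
Your argument is essentially the paper's: the functional equation comes from reading $\Psi(\zeta)\alpha$ at the parameter $\zeta^{-1}$ with the incoming and outgoing amplitudes exchanged and then invoking uniqueness, and unitarity comes from $F(\zeta)^*=F(\bar\zeta)$, hence $S(\zeta)^*=S(\bar\zeta)=S(\zeta^{-1})$ on the circle; your flux remark is the argument the paper later uses in Theorem~\ref{thm:matrix-maass-selberg}. One small slip is that $\det F(\zeta^{-1})$ does not behave like $(-a\zeta)^d$ as $|\zeta|\to\infty$, since its $\zeta^d$-coefficient is $\det(a^{-1}VV^*-aI_d)$, which vanishes for $\Gamma_0(T)$; it is nevertheless not identically zero, because $\det F$ is not and $\zeta\mapsto\zeta^{-1}$ is biholomorphic on $\CC^\times$.
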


\begin{proof}
Replacing \(\zeta\) by \(\zeta^{-1}\) exchanges the incoming and outgoing modes.  Uniqueness in Theorem \ref{thm:exact-scattering} therefore gives \eqref{eq:S-functional} wherever both sides are regular, hence meromorphically everywhere.  On the unit circle,
\[
        F(\zeta)^*=F(\zeta^{-1}).
\]
Taking adjoints in \eqref{eq:S-from-F} gives \(S(\zeta)^*=S(\zeta^{-1})\), and \eqref{eq:S-functional} proves unitarity.
\end{proof}

\subsection{A matrix-valued Green--Maass--Selberg identity}
\label{subsec:matrix-maass-selberg}

The full coefficient matrix also controls truncated inner products, not only
the pole set or the total scattering phase.  Let \(P_N\) be the orthogonal
projection of \(\mathscr H\) onto the finite core and the vertices
\(1,\ldots,N\) in every free tail, where \(N\ge1\).  For
\(\theta\in(0,\pi)\setminus\Theta_F\), write
\[
 x_\theta=2a\cos\theta,\qquad
 S_\theta=S(e^{i\theta}),\qquad
 G_\theta(n)=e^{in\theta}I_r+e^{-in\theta}S_\theta.
\]
Thus \(G_\theta(n)\) is the tail-value map of
\(\Psi_\theta=\Psi(e^{i\theta})\) at level \(n\).
The next theorem is the finite-difference, matrix-valued analogue of the
classical Maass--Selberg relation for truncated Eisenstein series
\cite{LanglandsEisenstein}.

\begin{theorem}[Matrix Green and Maass--Selberg identities]
\label{thm:matrix-maass-selberg}
For \(N\ge1\) and
\(\theta,\phi\in(0,\pi)\setminus\Theta_F\) with \(\theta\ne\phi\),
\begin{equation}\label{eq:matrix-green-identity}
\begin{aligned}
 (x_\theta-x_\phi)\Psi_\theta^*P_N\Psi_\phi
 =a\bigl(&G_\theta(N+1)^*G_\phi(N)\\
          &-G_\theta(N)^*G_\phi(N+1)\bigr).
\end{aligned}
\end{equation}
For every \(\theta\in(0,\pi)\setminus\Theta_F\), the equal-parameter
boundary identity gives
\begin{equation}\label{eq:green-flux-unitarity}
 S_\theta^*S_\theta=I_r.
\end{equation}
Moreover, with \(\dot S_\theta=\partial_\theta S(e^{i\theta})\),
\begin{equation}\label{eq:matrix-maass-selberg}
 \Psi_\theta^*P_N\Psi_\theta
 =(2N+1)I_r+iS_\theta^*\dot S_\theta
 +\frac{i}{2\sin\theta}
 \left(e^{-i(2N+1)\theta}S_\theta
       -e^{i(2N+1)\theta}S_\theta^*\right).
\end{equation}
In particular, every term on the right-hand side of
\eqref{eq:matrix-maass-selberg} is Hermitian.
\end{theorem}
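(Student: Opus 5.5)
The plan has three steps: derive the Green identity from a discrete summation by parts, specialize it at equal parameters to get unitarity, and then let \(\phi\to\theta\) to get the Maass--Selberg formula.

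\emph{Step 1: the Green identity.} Write \(\Psi_\phi\beta=(u,g)\) and \(\Psi_\theta\alpha=(u',g')\), both generalized eigenfunctions by Theorem~\ref{thm:exact-scattering}. I will compute
\[
\langle P_N J\Psi_\phi\beta,\Psi_\theta\alpha\rangle-\langle P_N\Psi_\phi\beta,J\Psi_\theta\alpha\rangle .
\]
Since \(J\Psi=x\Psi\) pointwise and \(x_\theta\) is real, this difference equals \((x_\phi-x_\theta)\,\alpha^*\Psi_\theta^*P_N\Psi_\phi\beta\). Next I expand it using the block form \eqref{eq:block-J}.
\begin{itemize}
\item The core block cancels because \(H_C\) is Hermitian.
\item The coupling terms \(Vg(1)\) against \(\Gamma_1V^*u\) cancel in pairs.
\item Inside each tail, for \(n=1,\dots,N\), the symmetric hopping \(a\) telescopes.
\end{itemize}
Only the boundary term at the pair \(N,N+1\) survives. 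It has the form \(a\bigl(g(N+1)\overline{g'(N)}-g(N)\overline{g'(N+1)}\bigr)\), summed over channels. Substituting \(g(n)=G_\phi(n)\beta\) and \(g'(n)=G_\theta(n)\alpha\) gives \eqref{eq:matrix-green-identity}; the overall sign is fixed by moving the factor \((x_\phi-x_\theta)\) to the other side.

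\emph{Step 2: unitarity.} Set \(\phi=\theta\). The left side of \eqref{eq:matrix-green-identity} vanishes, so the Wronskian-type matrix
\[
G_\theta(N+1)^*G_\theta(N)-G_\theta(N)^*G_\theta(N+1)
\]
is zero. Expanding \(G_\theta(n)=e^{in\theta}I+e^{-in\theta}S\), the cross terms cancel. What remains is \((e^{-i\theta}-e^{i\theta})(I-S^*S)\), and since \(\sin\theta\ne0\) this gives \eqref{eq:green-flux-unitarity}.

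\emph{Step 3: the Maass--Selberg formula.} I divide \eqref{eq:matrix-green-identity} by \(x_\theta-x_\phi\) and let \(\phi\to\theta\). The left side tends to \(\Psi_\theta^*P_N\Psi_\theta\), because \(P_N\) has finite rank and \(\Psi\) is continuous away from \(\Theta_F\). On the right, the numerator \(B(\theta,\phi)\) vanishes at \(\phi=\theta\) by Step 2, so l'Hôpital gives
\[
\frac{\partial_\phi B(\theta,\phi)|_{\phi=\theta}}{2a\sin\theta},
\]
using \(-\partial_\phi x_\phi=2a\sin\phi\). Differentiating \(G_\phi(n)=e^{in\phi}I+e^{-in\phi}S_\phi\) gives \(in e^{in\phi}I-ine^{-in\phi}S_\phi+e^{-in\phi}\dot S_\phi\). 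I then collect the terms in \(B\) by type.
\begin{itemize}
\item \emph{Diagonal terms} \(I\cdot I\) and \(S^*\cdot S\). These use \(S^*S=I\) and yield \((2N+1)\) times \(2i\sin\theta\), divided by \(2a\sin\theta\) and multiplied by \(a\).
\item \emph{Derivative terms.} These yield \(S^*\dot S\) multiplied by \(e^{i\theta}-e^{-i\theta}\).
\item \emph{Cross terms.} These yield \(e^{\mp i(2N+1)\theta}S\) and \(S^*\); a residual \(S\) term appears with coefficient of size \((e^{-i\theta}-e^{i\theta})\cdot\)const.
\end{itemize}
Some bookkeeping of these cross terms should reproduce the stated formula, including the \(\tfrac{i}{2\sin\theta}\) term. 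The main obstacle is this bookkeeping, in particular confirming that no stray \(N\)-independent multiple of \(S\) or \(S^*\) survives beyond the displayed term.

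\emph{Hermiticity.} Differentiating \(S^*S=I\) gives \(\dot S^*S+S^*\dot S=0\), so \(iS^*\dot S\) is Hermitian. The last bracket has the form \(\tfrac{i}{2\sin\theta}(M-M^*)\) with \(M=e^{-i(2N+1)\theta}S_\theta\), which is Hermitian. The term \((2N+1)I_r\) is trivially Hermitian, so every term on the right of \eqref{eq:matrix-maass-selberg} is Hermitian.
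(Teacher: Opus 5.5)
\textbf{Verdict.} Your route is the same as the paper's. Summation by parts over the truncated graph leaves only the flux through the cut edges. Equal parameters give $S_\theta^*S_\theta=I_r$. Letting $\phi\to\theta$ (your l'H\^opital step, with $-\partial_\phi x_\phi=2a\sin\phi$) gives the Maass--Selberg formula. Hermiticity comes from differentiating unitarity. Your Steps~1 and~2 and the Hermiticity argument are correct.

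\textbf{The gap.} It is in Step~3, and that step carries the entire content of \eqref{eq:matrix-maass-selberg}. You leave the bookkeeping as ``should reproduce,'' and the partial bookkeeping you state is wrong in three places.
\begin{enumerate}
\item The diagonal contribution is not $(2N+1)\cdot 2i\sin\theta$. The factor $in$ from differentiating $e^{in\phi}$ multiplies $e^{-i\theta}-e^{i\theta}=-2i\sin\theta$, so the contribution is the real number $2(2N+1)\sin\theta$. Your version would produce $i(2N+1)I_r$, which is not even Hermitian.
\item You do not address the terms $e^{-i(2N+1)\theta}\dot S_\theta$. They arise in both products and must be shown to cancel.
\item The ``residual'' $N$-independent multiple of $S_\theta$ that you anticipate does not exist.
\end{enumerate}

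\textbf{How to close it.} Track phases. Write $G_\theta(m)^*=e^{-im\theta}I+e^{im\theta}S_\theta^*$ and
\[
 \partial_\phi G_\phi(n)|_{\phi=\theta}=ine^{in\theta}I-ine^{-in\theta}S_\theta+e^{-in\theta}\dot S_\theta .
\]
Using $S_\theta^*S_\theta=I_r$, the product $G_\theta(m)^*\,\partial_\phi G_\phi(n)|_{\phi=\theta}$ is
\[
 in\bigl(e^{i(n-m)\theta}-e^{i(m-n)\theta}\bigr)I
 -in\,e^{-i(m+n)\theta}S_\theta
 +in\,e^{i(m+n)\theta}S_\theta^*
 +e^{-i(m+n)\theta}\dot S_\theta
 +e^{i(m-n)\theta}S_\theta^*\dot S_\theta .
\]
Now $\partial_\phi B$ is the case $(m,n)=(N+1,N)$ minus the case $(m,n)=(N,N+1)$. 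In both cases $m+n=2N+1$, which settles the three issues.
\begin{enumerate}
\item The $\dot S_\theta$ terms are identical in the two cases and cancel.
\item Every term linear in $S_\theta$ or $S_\theta^*$ alone carries the phase $e^{\mp i(2N+1)\theta}$. No $N$-independent multiple can survive, and the surviving coefficients are $-iN+i(N+1)=i$ and $iN-i(N+1)=-i$.
\item The identity coefficient is $i(2N+1)(e^{-i\theta}-e^{i\theta})=2(2N+1)\sin\theta$, and the $S_\theta^*\dot S_\theta$ coefficient is $e^{i\theta}-e^{-i\theta}=2i\sin\theta$.
\end{enumerate}
Hence
\[
 \partial_\phi B(\theta,\phi)|_{\phi=\theta}
 =2(2N+1)\sin\theta\,I_r+2i\sin\theta\,S_\theta^*\dot S_\theta
 +ie^{-i(2N+1)\theta}S_\theta-ie^{i(2N+1)\theta}S_\theta^* .
\]
This is exactly the intermediate identity in the paper's proof. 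Dividing by $2\sin\theta$ gives \eqref{eq:matrix-maass-selberg}, and with this computation inserted your argument is complete.
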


\begin{proof}
Fix \(\alpha,\beta\in\mathbb C^r\), put
\(f=\Psi_\phi\alpha\) and \(h=\Psi_\theta\beta\), and write
\(f_c(n)=f(c,n)\), \(h_c(n)=h(c,n)\) for their \(c\)-th tail coordinates.
Sum
\(\langle Jf,h\rangle-\langle f,Jh\rangle\) over the finite core and the
first \(N\) levels of all tails.  All core, attachment, and interior-tail
terms cancel.  The remaining boundary term is
\[
 a\sum_{c=1}^r
 \bigl(f_c(N+1)\overline{h_c(N)}
       -f_c(N)\overline{h_c(N+1)}\bigr).
\]
For each fixed \(c\), the corresponding summand is precisely the
sesquilinear flux through the cut edge joining levels \(N\) and \(N+1\) in
that cusp.  Thus the displayed sum is the total boundary flux of the
truncated graph; no additional boundary form is being suppressed.
Since \(Jf=x_\phi f\), \(Jh=x_\theta h\), and the spectral values are real,
reversing both sides is exactly the scalar pairing of \(\alpha,\beta\) with
\eqref{eq:matrix-green-identity}.

At equal parameters the left-hand side vanishes.  Expanding the boundary
term gives
\[
 a(e^{-i\theta}-e^{i\theta})(I_r-S_\theta^*S_\theta),
\]
and \(0<\theta<\pi\) proves \eqref{eq:green-flux-unitarity}.  Finally, let
\(\phi\to\theta\) in \eqref{eq:matrix-green-identity}.  Since
\(-\partial_\theta x_\theta=2a\sin\theta\), differentiating the boundary
term in \(\phi\) and using \(S_\theta^*S_\theta=I_r\) gives
\[
\begin{aligned}
 2\sin\theta\,\Psi_\theta^*P_N\Psi_\theta
 ={}&2(2N+1)\sin\theta\,I_r
      +2i\sin\theta\,S_\theta^*\dot S_\theta\\
 &+i e^{-i(2N+1)\theta}S_\theta
   -i e^{i(2N+1)\theta}S_\theta^*.
\end{aligned}
\]
Division by \(2\sin\theta\) proves
\eqref{eq:matrix-maass-selberg}.  Differentiating unitarity shows that
\(S_\theta^*\dot S_\theta\) is skew-Hermitian, which proves the final
assertion.
\end{proof}

\begin{corollary}[Coefficient information beyond the determinant]
\label{cor:maass-selberg-traceless}
For \(\theta\in(0,\pi)\setminus\Theta_F\), define the Hermitian
Maass--Selberg logarithmic-derivative matrix
\begin{equation}\label{eq:channel-mass-matrix}
 \mathcal M(\theta)=iS_\theta^*\dot S_\theta
\end{equation}
and its traceless part
\begin{equation}\label{eq:traceless-channel-mass}
 \mathcal M_0(\theta)
 =\mathcal M(\theta)-\frac1r\operatorname{tr}\mathcal M(\theta)I_r.
\end{equation}
Then \(\mathcal M(\theta)\) is recovered from any truncated Gram matrix in
\eqref{eq:matrix-maass-selberg} after subtracting the displayed universal and
boundary terms.  It also has the averaged form
\begin{equation}\label{eq:renormalized-gram-limit}
 \mathcal M(\theta)
 =\lim_{L\to\infty}\frac1L\sum_{N=1}^{L}
 \left\{\Psi_\theta^*P_N\Psi_\theta-(2N+1)I_r\right\},
\end{equation}
where the limit is taken in \(M_r(\mathbb C)\).  Locally on the unitary axis,
\begin{equation}\label{eq:trace-mass-determinant}
 \operatorname{tr}\mathcal M(\theta)
 =i\,\partial_\theta\log\det S_\theta.
\end{equation}
Thus \(\det S_\theta\) determines only the trace of
\(\mathcal M(\theta)\).  For \(r\ge2\), it does not in general determine
\(\mathcal M_0(\theta)\), which retains the channel-resolved finite term in
the truncated Gram matrix; for \(r=1\), the traceless part is zero.
\end{corollary}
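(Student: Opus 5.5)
The claims follow from Theorem~\ref{thm:matrix-maass-selberg}, so the plan is to verify four items in turn.

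\emph{Hermitian and recovery.} Since \(S_\theta^*S_\theta=I_r\) by \eqref{eq:green-flux-unitarity}, differentiating gives \(\dot S_\theta^*S_\theta+S_\theta^*\dot S_\theta=0\). Hence \(S_\theta^*\dot S_\theta\) is skew-Hermitian and \(\mathcal M(\theta)\) is Hermitian. Recovering \(\mathcal M(\theta)\) from a single truncated Gram matrix is immediate: solve \eqref{eq:matrix-maass-selberg} for the middle term. The remaining terms \((2N+1)I_r\) and the boundary term are explicit in \(\theta\), \(N\) and \(S_\theta\).

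\emph{Ces\`aro average.} Subtract \((2N+1)I_r\) in \eqref{eq:matrix-maass-selberg} and average over \(N=1,\dots,L\). The average is \(\mathcal M(\theta)\) plus
\[
\frac{i}{2\sin\theta}\,\frac1L\sum_{N=1}^L\bigl(e^{-i(2N+1)\theta}S_\theta-e^{i(2N+1)\theta}S_\theta^*\bigr).
\]
For \(\theta\in(0,\pi)\) we have \(e^{2i\theta}\ne1\), so the geometric sums \(\sum_{N=1}^{L}e^{\pm i(2N+1)\theta}\) are bounded by \(2/|1-e^{2i\theta}|=1/\sin\theta\). Because \(S_\theta\) is unitary, the averaged boundary term is \(O\bigl(1/(L\sin^2\theta)\bigr)\) in norm. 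This tends to zero and proves \eqref{eq:renormalized-gram-limit}.

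\emph{Trace formula.} Jacobi's formula gives \(\partial_\theta\log\det S_\theta=\operatorname{tr}(S_\theta^{-1}\dot S_\theta)\), valid locally off \(\Theta_F\) where \(S\) is smooth and \(\det S_\theta\ne0\). Since \(S_\theta^{-1}=S_\theta^*\) by \eqref{eq:S-unitary}, multiplying by \(i\) gives \eqref{eq:trace-mass-determinant}. The branch of \(\log\) is irrelevant, since only the derivative appears; this is why the statement is local.

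\emph{Non-determination for \(r\ge2\).} For \(r=1\) the traceless part vanishes trivially. For \(r\ge2\), "in general" needs one example, and this is the main obstacle if full rigour is required. My first attempt is a decoupled core: take \(d=r\), \(H_C=\diag(h_1,\dots,h_r)\), \(V=\diag(v_1,\dots,v_r)\). Then \(F\) is diagonal and \(S_\theta=\diag(s_1(\theta),\dots,s_r(\theta))\), with each \(s_c\) the scalar single-channel coefficient from \eqref{eq:S-from-F}. Now \(\mathcal M=\diag(i\bar s_c\dot s_c)\), while \(\det S=\prod_c s_c\). Exchanging the parameters \((h_1,v_1)\) and \((h_2,v_2)\) leaves \(\det S_\theta\) unchanged but permutes the diagonal of \(\mathcal M\). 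So \(\mathcal M_0\) changes whenever \(i\bar s_1\dot s_1\ne i\bar s_2\dot s_2\), which holds for generic distinct choices of \(h_c\). A cleaner variant keeps one operator: conjugate \(V\mapsto VQ\) by a unitary \(Q\in M_r(\mathbb C)\). This sends \(S\mapsto Q^*SQ\) and \(\mathcal M\mapsto Q^*\mathcal M Q\), fixing the determinant but generally changing the matrix entries, i.e.\ the channel-resolved data. Either version exhibits traceless information not determined by \(\det S_\theta\).
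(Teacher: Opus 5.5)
Your proof is correct. For the Hermitian property, the recovery statement, the Ces\`aro limit \eqref{eq:renormalized-gram-limit}, and the trace identity \eqref{eq:trace-mass-determinant}, it is the paper's argument: Jacobi's formula with \(S_\theta^{-1}=S_\theta^*\), and the averaged boundary exponentials vanish because \(e^{2i\theta}\ne1\). Your bound \(O\bigl(1/(L\sin^2\theta)\bigr)\) simply makes that last step quantitative.

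The genuine difference is in the final claim.
\begin{itemize}
\item \emph{The paper's route.} It builds no comparison system. It cites the arithmetic computation of Corollary~\ref{cor:gamma0-channel-mass}, where \(\mathcal M_0(\theta)=\tfrac12(m_+-m_-)\left(\begin{smallmatrix}0&1\\1&0\end{smallmatrix}\right)\ne0\). This shows the channel-resolved term is nontrivial on an actual cusp quotient.
\item \emph{Your route.} You produce two models with the same \(\det S_\theta\) and different \(\mathcal M_0(\theta)\). This is the more direct proof of non-determination, since a nonzero traceless part does not by itself supply such a pair. Your rotation \(V\mapsto VQ\) is legitimate: \(F\) depends only on \(VV^*\), so \(S\mapsto Q^*SQ\). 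Applied to the \(\Gamma_0(T)\) data, with \(Q\) the orthogonal matrix with columns \(e_+,e_-\), it gives the comparison model \(\diag(\sigma_+,\sigma_-)\) with the same determinant. This turns the paper's example into an explicit pair.
\end{itemize}

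One limitation of your constructions: permutation and rotation change \(\mathcal M_0\) only by a constant unitary conjugation, so its spectrum is preserved. That suffices here, because the statement concerns the labelled matrix. If you also want basis-invariant information to escape the determinant, regroup resonances in your decoupled model instead.
\begin{itemize}
\item There \(s_c(\zeta)=-\frac{(1-\rho_c\zeta)(1-\rho_c'\zeta)}{(\zeta-\rho_c)(\zeta-\rho_c')}\), with \(\rho_c,\rho_c'\) the roots of \(\zeta F_c(\zeta)\).
\item Hence \(\det S\) depends only on the multiset of all roots.
\item Meanwhile \(i\overline{s_c}\,\dot s_c\) is the sum of the Poisson kernels \((1-\rho^2)/|e^{i\theta}-\rho|^2\) at \(\rho_c\) and \(\rho_c'\).
\item Redistributing real roots in \((-1,1)\) between two channels (realizable via \(h_c=a(\rho_c+\rho_c')\) and \(|v_c|^2=q(1-\rho_c\rho_c')\)) fixes \(\det S\). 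It does, however, change the eigenvalues \(\pm\tfrac12\bigl(i\overline{s_1}\dot s_1-i\overline{s_2}\dot s_2\bigr)\) of \(\mathcal M_0\).
\end{itemize}
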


\begin{proof}
Jacobi's formula and unitarity give
\[
 \partial_\theta\log\det S_\theta
 =\operatorname{tr}(S_\theta^{-1}\dot S_\theta)
 =\operatorname{tr}(S_\theta^*\dot S_\theta).
\]
The geometric averages of the two boundary exponentials in
\eqref{eq:matrix-maass-selberg} tend to zero because
\(e^{2i\theta}\ne1\), proving \eqref{eq:renormalized-gram-limit}.  The other
statements follow directly from \eqref{eq:matrix-maass-selberg}.  The genuine
two-cusp calculation in Corollary~\ref{cor:gamma0-channel-mass} exhibits a
nonzero traceless part and therefore realizes the asserted loss of
information arithmetically.
\end{proof}

The next identity determines the total scattering phase from a single finite
determinant.  Reciprocal determinant principles in \(p\)-adic graph scattering
predate the present work \cite{Chekhov}; here the identity fixes the precise
matrix needed for comparison with Eisenstein constant terms and the finite
resonance formalism of Arends, Peterson, and Weich.

\begin{theorem}[Reciprocal determinant identity]\label{thm:determinant-identity}
Whenever \(\det F(\zeta)\det F(\zeta^{-1})\ne0\),
\begin{equation}\label{eq:determinant-identity}
        \det S(\zeta)
        =(-1)^r\frac{\det F(\zeta^{-1})}{\det F(\zeta)}.
\end{equation}
Both sides agree as meromorphic functions.
\end{theorem}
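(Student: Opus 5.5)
The plan is to compute \(\det S(\zeta)\) directly from \eqref{eq:S-from-F} with Sylvester's determinant identity \(\det(I_r+XY)=\det(I_d+YX)\), and then to recognise the resulting \(d\times d\) determinant as \(\det F(\zeta^{-1})/\det F(\zeta)\).

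First, factor out the sign. Write
\[
 S(\zeta)=-\Bigl(I_r-\frac{\zeta^{-1}-\zeta}{a}V^*F(\zeta)^{-1}V\Bigr),
\]
so that
\[
 \det S(\zeta)=(-1)^r\det\Bigl(I_r-\frac{\zeta^{-1}-\zeta}{a}V^*F(\zeta)^{-1}V\Bigr).
\]
Sylvester's identity with \(X=V^*\) and \(Y=F(\zeta)^{-1}V\) then turns this into
\[
 (-1)^r\det\Bigl(I_d-\frac{\zeta^{-1}-\zeta}{a}F(\zeta)^{-1}VV^*\Bigr)
 =(-1)^r\frac{\det\bigl(F(\zeta)-\frac{\zeta^{-1}-\zeta}{a}VV^*\bigr)}{\det F(\zeta)}.
\]

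Second, identify the numerator. By \eqref{eq:F-zeta},
\[
 F(\zeta)-\frac{\zeta^{-1}-\zeta}{a}VV^*
 =H_C-x(\zeta)I_d+\frac{\zeta}{a}VV^*.
\]
Since \(x(\zeta)=x(\zeta^{-1})\), this matrix is exactly \(F(\zeta^{-1})\). That gives \eqref{eq:determinant-identity} at every \(\zeta\) where \(F(\zeta)\) is invertible. The extra hypothesis \(\det F(\zeta^{-1})\ne0\) is only needed so that both sides are finite and nonzero, which makes the identity consistent with the functional equation in Corollary~\ref{cor:S-functional-unitary}.

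Third, pass to a meromorphic identity. Both sides are rational functions of \(\zeta\), and \(\det F\) is not identically zero: as \(|\zeta|\to\infty\), \(F(\zeta)\sim -a\zeta I_d\). The two sides therefore agree on a dense open set, and hence as meromorphic functions on \(\mathbb C^\times\).

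There is no real obstacle here. The only step that needs care is the sign bookkeeping, namely checking that the \(-I_r\) term together with the coefficient \((\zeta^{-1}-\zeta)/a\) flips \(\zeta^{-1}/a\) into \(\zeta/a\) in the \(VV^*\) term, so that the numerator is \(F(\zeta^{-1})\) and not some other matrix.
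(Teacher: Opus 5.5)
Your proposal is correct and follows the paper's proof essentially verbatim: factor out $(-1)^r$, apply Sylvester's identity to pass to a $d\times d$ determinant, and recognize $F(\zeta)-\frac{\zeta^{-1}-\zeta}{a}VV^*=F(\zeta^{-1})$ using $x(\zeta)=x(\zeta^{-1})$. Your two additional remarks are correct. First, the pointwise identity needs only that $F(\zeta)$ is invertible. Second, $\det F\not\equiv 0$ because $F(\zeta)\sim -a\zeta I_d$, which makes explicit the meromorphic-extension step the paper leaves implicit.
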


\begin{proof}
Set \(c(\zeta)=(\zeta^{-1}-\zeta)/a\).  Sylvester's determinant identity and \eqref{eq:S-from-F} give
\[
\begin{aligned}
 \det S(\zeta)
 &=(-1)^r\det\!\left(I_r-c(\zeta)V^*F(\zeta)^{-1}V\right)\\
 &=(-1)^r\det\!\left(I_d-c(\zeta)F(\zeta)^{-1}VV^*\right)\\
 &=(-1)^r\frac{\det(F(\zeta)-c(\zeta)VV^*)}{\det F(\zeta)}.
\end{aligned}
\]
But the definition of \(F\) gives
\[
        F(\zeta)-c(\zeta)VV^*
        =H_C-x(\zeta)I_d+\frac{\zeta}{a}VV^*
        =F(\zeta^{-1}),
\]
which proves \eqref{eq:determinant-identity}.
\end{proof}

\begin{remark}[Coefficient matrix versus determinant]\label{rem:determinant-scope}
The function \(\det F\) determines \(\det S\), but it does not determine the
individual entries of \(S\).  Formula \eqref{eq:S-from-F} also requires the
attachment operator \(V\) with its cusp labels.  Moreover, reciprocal common
factors cancel from \(\det S\).  Factors arising from core-supported
eigenfunctions always cancel, as proved in Section~\ref{sec:cusp-invisible};
additional reciprocal coincidences among other blocks may cancel in the
determinant even when the matrix-valued scattering data remain nontrivial.
The finite-tail form of \eqref{eq:S-from-F} is classical in graph scattering
\cite[Eq.~(32)]{VarbanovBrun}; its role here is to supply the exactly normalized
coefficient matrix used in Theorem~\ref{thm:finite-resonance-comparison}.
\end{remark}

\subsection{Resolvent jump and spectral decomposition}\label{subsec:jump-resolution}

The explicit formula above also fixes the Plancherel measure.  For \(0<\theta<\pi\), put
\[
        \zeta=e^{i\theta},\qquad x=2a\cos\theta,
        \qquad \Psi_\theta=\Psi(e^{i\theta}).
\]
Thus
\begin{equation}\label{eq:Psi-explicit}
 \Psi_\theta\alpha
 =
 \left(
 (e^{-i\theta}-e^{i\theta})F(e^{i\theta})^{-1}V\alpha,\,
 \bigl(e^{in\theta}\alpha+e^{-in\theta}S(e^{i\theta})\alpha\bigr)_{n\ge1}
 \right).
\end{equation}
At the finite set \(\Theta_F\), assign arbitrary measurable values to
\(\Psi_\theta\); changing values on this null set does not affect any spectral
integral.

\begin{proposition}[Spectral type and multiplicity]\label{prop:spectral-type}
For \(J\) in \eqref{eq:block-J},
\[
        \Spec_{\mathrm{ess}}(J)=[-2a,2a].
\]
Its singular continuous spectrum is empty, its pure point subspace is finite-dimensional, and its absolutely continuous spectrum has multiplicity \(r\) almost everywhere on \((-2a,2a)\).
\end{proposition}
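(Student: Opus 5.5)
The strategy is to treat \(J\) as a finite-rank perturbation of a decoupled operator and to read off the spectral measure from the boundary values of the core--core resolvent \(F(\zeta)^{-1}\) supplied by Lemma~\ref{lem:exact-schur-complement}.

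\textbf{Essential spectrum.} Put \(J_{\mathrm{dec}}=H_C\oplus J_0^{(r)}\). Then \(J-J_{\mathrm{dec}}\) has rank at most \(2\min(d,r)\), so Weyl's theorem gives \(\Spec_{\mathrm{ess}}(J)=\Spec_{\mathrm{ess}}(J_0^{(r)})\). The free half-line Jacobi operator with constant coefficient \(a\) has purely absolutely continuous spectrum \([-2a,2a]\) of multiplicity one; its spectral measure at \(\delta_1\) is the semicircle density, read off from \(m=-\zeta^{-1}/a\). This proves the first assertion.

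\textbf{Pure point subspace and absence of singular continuous spectrum.} Off \([-2a,2a]\), the essential spectrum is empty, so the only spectrum there is eigenvalues of finite multiplicity, and these can accumulate only at \(\pm2a\). To exclude accumulation, I will use the following argument.
\begin{itemize}
\item Write \(x=x(\zeta)\) with \(|\zeta|>1\) real.
\item An \(\ell^2\) eigenvector must have tails \(\zeta^{-n}\beta\), so its core part lies in \(\ker F(\zeta)\) with \(\beta=V^*u/a\).
\item Conversely, \(\ker F(\zeta)\) injects into the eigenspace.
\end{itemize}
Since \(\det F\) is a nonzero Laurent polynomial (its leading term in \(\zeta\) is \((-a\zeta)^d\)), it has finitely many zeros. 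Hence there are finitely many eigenvalues outside \([-2a,2a]\), each of multiplicity at most \(d\).

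Inside \((-2a,2a)\), an eigenvector has tail \(\zeta^n\alpha+\zeta^{-n}\beta\) with \(|\zeta|=1\). This is square-summable only if \(\alpha=\beta=0\), so the tail vanishes and the eigenvector is supported in the core with \(V^*u=0\). The space of such embedded eigenvectors is contained in \(\mathbb C^d\), hence finite-dimensional. The thresholds \(\pm2a\) likewise admit only core-supported eigenvectors, since \(n\) and \((\pm1)^n\) are not in \(\ell^2\). Altogether the point spectrum is a finite set with finite multiplicities.

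For the continuous part, I will apply limiting absorption. The vectors \(\delta_v\) (core vertices) and \(\delta_{(c,1)}\) generate \(\mathscr H\) cyclically together with \(J\), because the tail vertices are reached by repeatedly applying \(J_0\). So it suffices to show that for \(x\in(-2a,2a)\) off the finite set \(2a\cos\Theta_F\cup\{\pm2a\}\), the matrix elements of \((J-x-i0)^{-1}\) among these generators have finite boundary values. By Lemma~\ref{lem:exact-schur-complement} and the block inverse formula, those matrix elements are rational in \(\zeta\) and built from \(F(\zeta)^{-1}\) and \(\zeta^{-1}/a\). As \(x+i\varepsilon\to x\), we have \(\zeta\to e^{-i\theta}\) from outside the unit circle, so the boundary values are continuous away from the finite set where \(\det F(e^{\mp i\theta})=0\). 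Finite boundary values of the Borel transforms imply that the spectral measures are absolutely continuous on the complement of a finite set. A finite set carries no singular continuous mass, so the singular continuous spectrum is empty.

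\textbf{Multiplicity \(r\).} The wave operators \(W_\pm(J,J_{\mathrm{dec}})\) exist and are complete by Kato--Rosenblum, since the perturbation is trace class. Hence \(J_{\mathrm{ac}}\) is unitarily equivalent to \((J_{\mathrm{dec}})_{\mathrm{ac}}=J_0^{(r)}\), which has multiplicity exactly \(r\) on \((-2a,2a)\). This step is routine. The main obstacle is the limiting absorption step: one must make sure the possible zeros of \(\det F(e^{i\theta})\) on the arc produce at most point masses, namely embedded core-supported eigenvalues, and no singular continuous component. The finiteness of \(\Theta_F\) handles this, because any singular continuous measure would need to be supported in that finite set, which is impossible.
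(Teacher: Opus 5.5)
Your proposal is correct and follows essentially the paper's route: Weyl's theorem for the essential spectrum, Kato--Rosenblum for absolutely continuous multiplicity $r$, and rationality in $\zeta$ of the resolvent entries on the cyclic subspace spanned by the core and the first tail vertices (via the Schur complement $F(\zeta)$) to rule out singular continuous spectrum. The one difference is how you get a finite-dimensional pure point subspace: you count eigenvectors directly (off-band eigenspaces $\cong\ker F(\zeta)$, embedded and threshold eigenvectors core-supported), whereas the paper counts the finitely many poles of those rational entries; your version is slightly more explicit (minor slip: $\zeta\to e^{i\theta}$, not $e^{-i\theta}$, as $z\to x+i0$, which is harmless because $\det F(e^{-i\theta})=\overline{\det F(e^{i\theta})}$).
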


\begin{proof}
Decoupling the core and the \(r\) tails is a finite-rank perturbation.  Weyl's
theorem gives the essential spectrum, and the Kato--Rosenblum theorem
\cite{Kato} preserves the free absolutely continuous multiplicity \(r\)
almost everywhere.

It remains to exclude singular continuous spectrum.  Let \(\mathscr K\) be
the span of the entire core and the first vertex of each tail, let
\(\iota_{\mathscr K}:\mathscr K\hookrightarrow\mathscr H\) be the inclusion,
and let \(P_{\mathscr K}=\iota_{\mathscr K}^*\) be coordinate restriction.
Thus, for \(z\in\mathbb C\setminus\Spec(J)\), the compressed resolvent is the
type-correct operator
\[
 \iota_{\mathscr K}^*(J-z)^{-1}\iota_{\mathscr K}
 =P_{\mathscr K}(J-z)^{-1}P_{\mathscr K}^*:\mathscr K\to\mathscr K.
\]
If \(P_{\mathscr K}\) is instead regarded as the orthogonal projection on
\(\mathscr H\), the same compression is written
\(P_{\mathscr K}(J-z)^{-1}|_{\mathscr K}\).  This finite-dimensional
subspace is cyclic for \(J\): repeated application of \(J\) reaches every
finitely supported tail vector.  By the Schur complement in
Lemma~\ref{lem:exact-schur-complement}, every matrix entry of the compressed
resolvent is rational in \(\zeta\).  Its boundary values are therefore real
analytic on each compact subinterval of the open band after removal of
finitely many points.  Stone's formula shows that the spectral measures of
vectors in \(\mathscr K\) are absolutely continuous there; the omitted
finite set can support only atoms, not a singular continuous measure.
Cyclicity gives the same conclusion on \(\mathscr H\).  Finally, the poles
of the finitely many rational resolvent entries form a finite set, so the pure
point subspace is finite-dimensional.
\end{proof}

\begin{theorem}[Resolvent jump and Plancherel formula]\label{thm:resolvent-jump}
Let \(R(z)=(J-z)^{-1}\).  As an identity of sesquilinear forms on finitely supported vectors,
\begin{equation}\label{eq:resolvent-jump}
        R(x+i0)-R(x-i0)
        =\frac{i}{2a\sin\theta}\Psi_\theta\Psi_\theta^*,
        \qquad x=2a\cos\theta,
\end{equation}
for \(\theta\in(0,\pi)\setminus\Theta_F\).  Consequently
\begin{equation}\label{eq:standard-plancherel}
        P_{\mathrm{ac}}
        =\frac1{2\pi}\int_0^\pi
          \Psi_\theta\Psi_\theta^*\,d\theta.
\end{equation}
If \(\{\varphi_j\}_{j=1}^N\) is an orthonormal basis of the finite-dimensional pure point subspace, then the complete resolution is
\begin{equation}\label{eq:complete-standard-resolution}
        I_{\mathscr H}
        =\sum_{j=1}^N\varphi_j\varphi_j^*
         +\frac1{2\pi}\int_0^\pi
          \Psi_\theta\Psi_\theta^*\,d\theta.
\end{equation}
For a finitely supported \(f\), define
\[
        \widehat f_0(\theta)=\Psi_\theta^*f
        =\bigl(\langle f,\Psi_\theta e_c\rangle\bigr)_{c=1}^r,
\]
where the pairing is a finite sum.  Then
\begin{align}
        P_{\mathrm{ac}}f
        &=\frac1{2\pi}\int_0^\pi
          \Psi_\theta\widehat f_0(\theta)\,d\theta,
        \label{eq:standard-expansion}\\
        \|P_{\mathrm{ac}}f\|^2
        &=\frac1{2\pi}\int_0^\pi
          \|\widehat f_0(\theta)\|_{\CC^r}^2\,d\theta.
        \label{eq:standard-parseval}
\end{align}
In particular,
\[
        \|f\|^2
        =\sum_{j=1}^N|\langle f,\varphi_j\rangle|^2
         +\frac1{2\pi}\int_0^\pi
          \|\widehat f_0(\theta)\|_{\CC^r}^2\,d\theta.
\]
The integral in \eqref{eq:standard-expansion} is initially a weak integral;
its extension to arbitrary \(L^2\)-vectors is stated in Theorem
\ref{thm:unitary-scattering-transform} below.
\end{theorem}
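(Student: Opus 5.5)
We compute the resolvent kernel explicitly from the outgoing generalized eigenfunctions, then take the difference of boundary values across the band.

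\textbf{Step 1: the Green function for $|\zeta|>1$.} By Lemma~\ref{lem:exact-schur-complement} the resolvent $R(x(\zeta))$, $|\zeta|>1$, has core--core block $F(\zeta)^{-1}$. The remaining blocks follow from the block inverse formula:
\[
 \text{tail--core block}=-(J_0^{(r)}-x)^{-1}\Gamma_1V^*F^{-1},
 \qquad
 \text{tail--tail block}=(J_0^{(r)}-x)^{-1}+(J_0^{(r)}-x)^{-1}\Gamma_1V^*F^{-1}V\Gamma_1^*(J_0^{(r)}-x)^{-1}.
\]
For the free half-line with Dirichlet condition at $0$, the kernel is
\[
 (J_0-x)^{-1}(n,m)=\frac{\zeta^{-\max(n,m)}\bigl(\zeta^{\min(n,m)}-\zeta^{-\min(n,m)}\bigr)}{a(\zeta^{-1}-\zeta)}.
\]
In particular $(J_0-x)^{-1}\delta_1(n)=-\zeta^{-n}/a$, consistent with $m=-\zeta^{-1}/a$. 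Hence every entry of $R(x(\zeta))$ between finitely supported vectors is an explicit rational function of $\zeta$. It is regular at $\zeta=e^{\pm i\theta}$ for $\theta\notin\Theta_F$, because $\det F(e^{-i\theta})=\overline{\det F(e^{i\theta})}$ by $F(\zeta)^*=F(\zeta^{-1})$ on the circle.

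\textbf{Step 2: the jump.} As $x\to 2a\cos\theta\pm i0$, the branch $|\zeta|>1$ tends to $\zeta=e^{\mp i\theta}$. Fix $\operatorname{Im}x>0$: then $x=a(\zeta+\zeta^{-1})$ with $|\zeta|>1$ forces $\operatorname{Im}\zeta<0$, so $\zeta\to e^{-i\theta}$. Thus
\[
 R(x+i0)-R(x-i0)=\mathcal G(e^{-i\theta})-\mathcal G(e^{i\theta}),
\]
where $\mathcal G$ denotes the rational kernel of Step 1. To identify this with $\frac{i}{2a\sin\theta}\Psi_\theta\Psi_\theta^*$, we avoid grinding through all blocks with a structural argument. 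For fixed $y$, the function $D(\cdot,y)=\mathcal G(e^{-i\theta})(\cdot,y)-\mathcal G(e^{i\theta})(\cdot,y)$ solves $(J-x)D=0$ everywhere, since the delta sources cancel. Its tail values have the form $e^{in\theta}\alpha+e^{-in\theta}\beta$. Uniqueness in Theorem~\ref{thm:exact-scattering} (for $\theta\notin\Theta_F$) then gives $D(\cdot,y)=\Psi_\theta\alpha(y)$, where $\alpha(y)$ is the incoming amplitude.

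It remains to compute $\alpha(y)$ from the columns with $y$ far out in tail $c$. There $\mathcal G(\zeta)(n,y)$ for $n<y$ is a solution $\propto\zeta^{-y}$ times the regular solution, so only the $\zeta^{-y}$ factor contributes to the incoming coefficient. The incoming coefficient of $e^{in\theta}$ comes solely from $\mathcal G(e^{-i\theta})$, whose outgoing mode is $e^{in\theta}$, namely
\[
 \alpha(y)=\frac{-e^{i y\theta}\ \cdot\ (\text{normalizing constant})}{a(e^{i\theta}-e^{-i\theta})}.
\]
Matching with the outgoing coefficient of $\mathcal G(e^{-i\theta})(\cdot,y)$ identifies it with $\overline{\Psi_\theta(y)e_c}$ up to the constant $i/(2a\sin\theta)$. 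Hermitian symmetry $R(\bar z)=R(z)^*$ then extends the identification to all $y$, including core vertices, since both sides are sesquilinear kernels solving the eigen-equation in each variable. This normalization-matching step is the main obstacle. If the symmetry argument proves awkward, the fallback is to verify the core--core block directly: there the claim reads
\[
 F(e^{-i\theta})^{-1}-F(e^{i\theta})^{-1}=\frac{i}{2a\sin\theta}(4\sin^2\theta)\,F(e^{i\theta})^{-1}VV^*F(e^{i\theta})^{-*},
\]
which follows from the resolvent identity since $F(e^{i\theta})-F(e^{-i\theta})=-2i\sin\theta\,VV^*/a$.

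\textbf{Step 3: integration.} By Proposition~\ref{prop:spectral-type}, there is no singular continuous spectrum, and the point spectrum is finite with no atoms inside the band except at finitely many points. Stone's formula gives, for finitely supported $f$,
\[
 \langle P_{\mathrm{ac}}f,f\rangle=\frac1{2\pi i}\int_{-2a}^{2a}\langle (R(x+i0)-R(x-i0))f,f\rangle\,dx.
\]
Eigenvalues embedded in the band or at thresholds carry no ac mass and are removed by the limiting procedure because the boundary values are rational with integrable behaviour. Substituting $dx=2a\sin\theta\,d\theta$ cancels the prefactor and yields $\frac1{2\pi}\int_0^\pi\|\Psi_\theta^*f\|^2d\theta$. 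This gives \eqref{eq:standard-parseval}, and polarization gives \eqref{eq:standard-plancherel} and \eqref{eq:standard-expansion}. Adding the spectral projection onto the finite-dimensional point subspace gives \eqref{eq:complete-standard-resolution} and the final norm identity.

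A residual point to check is integrability of $\Psi_\theta^*f$ near $\Theta_F$ and the thresholds. Since $\Theta_F$ consists of zeros of $\det F$ on the circle, where the jump formula is known to be bounded as a boundary value of a Herglotz-type quantity on $L^2$ data, the poles must be removable in $\Psi_\theta^*f$ for the ac part. I would justify this via positivity: the left side is a nonnegative finite measure, so the rational density must be locally integrable.
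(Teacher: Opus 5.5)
Your route is a workable alternative to the paper's stationary factorization through $J_D=H_C\oplus J_0^{(r)}$: take boundary values of the explicit Schur-complement Green function, use uniqueness from Theorem~\ref{thm:exact-scattering} to force the jump into the form $\Psi_\theta M\Psi_\theta^*$, and fix $M$ from tail asymptotics. But Step~2 as written takes the wrong branch, and this reverses the sign of the identity.

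Write $\zeta=\rho e^{i\phi}$ with $\rho>1$. Then $\operatorname{Im}(\zeta+\zeta^{-1})=(\rho-\rho^{-1})\sin\phi$, so $\operatorname{Im}x>0$ forces $\operatorname{Im}\zeta>0$. Hence $R(x+i0)$ is the value at $\zeta=e^{i\theta}$, not $e^{-i\theta}$. Equivalently, the Herglotz function $m=-\zeta^{-1}/a$ must have positive imaginary part at $x+i0$. Your $D=\mathcal G(e^{-i\theta})-\mathcal G(e^{i\theta})$ is therefore minus the true jump. Matching it would give $-\frac{i}{2a\sin\theta}\Psi_\theta\Psi_\theta^*$, which is a negative density in Stone's formula.

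Your fallback exposes the inconsistency. From $F(e^{i\theta})-F(e^{-i\theta})=-2i\sin\theta\,VV^*/a$, the resolvent identity gives
\[
F(e^{-i\theta})^{-1}-F(e^{i\theta})^{-1}
=-\frac{2i\sin\theta}{a}F(e^{i\theta})^{-1}VV^*F(e^{i\theta})^{-*},
\]
which is the negative of what you say follows. Even with the sign fixed, the core--core block only detects $VMV^*$. That does not determine $M$ when $V$ is not injective, as happens for the two eastern cusps of the elliptic example.

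The normalization step is also never carried out, and the constant it leaves open is exactly the content of the theorem. You present $\alpha(y)$ as $e^{iy\theta}$ times an unspecified constant. In fact, for $y=(c,m)$ one must have $\alpha_d(y)=\frac{i}{2a\sin\theta}\overline{(\Psi_\theta e_d)(y)}$, which contains both $e^{-im\theta}$ and $e^{im\theta}$ terms.

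The repair is short.
\begin{enumerate}
\item With $\zeta=e^{i\theta}$, put $D=\mathcal G(\zeta)-\mathcal G(\zeta^{-1})$. Your uniqueness argument, together with $D^*=-D$ (from $R(\bar z)=R(z)^*$), gives $D=\Psi_\theta M\Psi_\theta^*$ for a constant $M\in M_r(\mathbb C)$.
\item At $((d,n),(c,m))$ with $n\ge m$, the coefficient of $\zeta^{n-m}$ in $\Psi_\theta M\Psi_\theta^*$ is $M_{dc}$.
\item In $D$, the block-inverse correction $a^{-2}\bigl(\zeta^{-n-m}V^*F(\zeta)^{-1}V-\zeta^{n+m}V^*F(\zeta^{-1})^{-1}V\bigr)$ contributes nothing to that coefficient. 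The free Dirichlet part is $\delta_{dc}(\zeta^m-\zeta^{-m})(\zeta^{-n}-\zeta^n)/\bigl(a(\zeta^{-1}-\zeta)\bigr)$.
\item Since $\zeta^2\ne1$, the four exponentials $\zeta^{\pm n\pm m}$ are independent. Hence $M=\bigl(a(\zeta^{-1}-\zeta)\bigr)^{-1}I_r=\frac{i}{2a\sin\theta}I_r$.
\end{enumerate}
With this correction, Step~3 goes through: Stone's formula, $|dx|=2a\sin\theta\,d\theta$, and integrability near $\Theta_F$ from positivity of the ac density.

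The paper inherits the prefactor from the free Dirichlet jump via $R_+-R_-=(I+R_{D,+}Q)^{-1}(R_{D,+}-R_{D,-})(I+QR_{D,-})^{-1}$. Your corrected version instead reads the constant off the free part of the kernel. It never meets the auxiliary poles coming from eigenvalues of the decoupled $H_C$, at the cost of the skew-symmetry step needed to pass from single columns to the full kernel.
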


\begin{proof}
Let \(J_D=H_C\oplus J_0^{(r)}\), let \(Q=J-J_D\), and write
\(R_D(z)=(J_D-z)^{-1}\), \(R_\pm=R(x\pm i0)\), and
\(R_{D,\pm}=R_D(x\pm i0)\).  For the Dirichlet half-line, the unit-incoming generalized eigenfunction is
\[
        e^{in\theta}-e^{-in\theta}=2i\sin(n\theta),
\]
and let \(\Psi_\theta^0\) be the direct sum of these \(r\) free states, extended by zero on the core.  Away from the eigenvalues of the decoupled finite core,
\[
 R_{D,+}-R_{D,-}
 =\frac{i}{2a\sin\theta}\Psi_\theta^0(\Psi_\theta^0)^*.
\]
The second resolvent identity gives the exact stationary factorization
\[
 R_+-R_-
 =(I+R_{D,+}Q)^{-1}(R_{D,+}-R_{D,-})
  (I+QR_{D,-})^{-1}.
\]
All boundary products are interpreted as limits of sesquilinear forms on finitely supported vectors.  Since \(Q\) has finite rank, the required inverses reduce to meromorphic finite matrices on \(\operatorname{ran}Q\).
Since \(R_{D,+}^*=R_{D,-}\), the two exterior factors are adjoints.  The
generalized-eigenfunction map
\[
        (I+R_{D,+}Q)^{-1}\Psi_\theta^0
\]
solves the coupled eigenvalue equation and has unit incoming amplitudes; uniqueness in Theorem~\ref{thm:exact-scattering} identifies it with \(\Psi_\theta\).  This proves \eqref{eq:resolvent-jump} for every block at once.

For comparison, the core--core block can be checked directly.  On the branch
\(|\zeta|>1\), the free tail Weyl function is \(-\zeta^{-1}/a\), so the two
core boundary resolvents are \(F(\zeta)^{-1}\) and
\(F(\zeta^{-1})^{-1}\), and
\[
\begin{aligned}
 F(\zeta)^{-1}-F(\zeta^{-1})^{-1}
 &=F(\zeta)^{-1}\{F(\zeta^{-1})-F(\zeta)\}F(\zeta^{-1})^{-1}\\
 &=\frac{\zeta-\zeta^{-1}}a
   F(\zeta)^{-1}VV^*F(\zeta^{-1})^{-1}.
\end{aligned}
\]
On \(\zeta=e^{i\theta}\), this agrees with the core block of the factorization above by \eqref{eq:core-poisson}.

Both sides of \eqref{eq:resolvent-jump} are meromorphic in \(\zeta\).
Consequently the identity extends across the auxiliary poles arising from
eigenvalues of the decoupled finite core whenever the coupled boundary
resolvent is regular.

Stone's formula for \(R(z)=(J-z)^{-1}\) is
\[
        dP_{\mathrm{ac}}(x)
        =\frac{1}{2\pi i}\{R(x+i0)-R(x-i0)\}\,dx.
\]
Since \(dx=-2a\sin\theta\,d\theta\) when the band is traversed from \(2a\) to \(-2a\), integrating the jump and adding the finitely many atoms from Proposition~\ref{prop:spectral-type} gives \eqref{eq:standard-plancherel}--\eqref{eq:complete-standard-resolution}.  The expansion and Parseval identities follow first for finitely supported vectors and then by density.
\end{proof}

\begin{theorem}[Unitary scattering transform]
\label{thm:unitary-scattering-transform}
Let \(\mathscr H_{\mathrm{ac}}=\operatorname{ran}P_{\mathrm{ac}}\).  The map
on finitely supported vectors given by
\[
 f\longmapsto \widehat f_0(\theta)=\Psi_\theta^*f
\]
depends only on \(P_{\mathrm{ac}}f\) as an almost-everywhere equivalence
class and extends uniquely to a unitary operator
\begin{equation}\label{eq:unitary-scattering-transform}
 \mathcal F_{\mathrm{ac}}:\mathscr H_{\mathrm{ac}}
 \longrightarrow
 L^2\!\left((0,\pi),\mathbb C^r;\frac{d\theta}{2\pi}\right).
\end{equation}
It diagonalizes \(J\):
\begin{equation}\label{eq:unitary-intertwining}
 (\mathcal F_{\mathrm{ac}}Jf)(\theta)
 =2a\cos\theta\,(\mathcal F_{\mathrm{ac}}f)(\theta)
\end{equation}
for \(f\in\mathscr H_{\mathrm{ac}}\).  For arbitrary
\(f\in\mathscr H\), the notation
\(\widehat f=\mathcal F_{\mathrm{ac}}P_{\mathrm{ac}}f\) denotes this
\(L^2\)-equivalence class, and
\begin{equation}\label{eq:unitary-scattering-inverse}
 P_{\mathrm{ac}}f
 =\frac1{2\pi}\int_0^\pi\Psi_\theta\widehat f(\theta)\,d\theta
\end{equation}
in the weak, equivalently \(L^2\), sense.
\end{theorem}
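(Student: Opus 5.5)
The proof builds the isometry from the Parseval identity \eqref{eq:standard-parseval} of Theorem~\ref{thm:resolvent-jump}, then establishes surjectivity.

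\emph{Well-definedness and isometry.} Let \(\mathscr D\) be the finitely supported vectors and define \(T_0f=\widehat f_0\). By \eqref{eq:standard-parseval},
\[
\|T_0f\|^2_{L^2(d\theta/2\pi)}=\|P_{\mathrm{ac}}f\|^2 .
\]
Applying this to \(f-g\) shows that \(T_0f\) depends only on \(P_{\mathrm{ac}}f\) almost everywhere. Hence \(P_{\mathrm{ac}}f\mapsto T_0f\) is a well-defined isometry on the dense subspace \(P_{\mathrm{ac}}\mathscr D\subset\mathscr H_{\mathrm{ac}}\). It therefore extends uniquely to an isometry \(\mathcal F_{\mathrm{ac}}\). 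Setting \(\widehat f=\mathcal F_{\mathrm{ac}}P_{\mathrm{ac}}f\) is then consistent.

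\emph{Intertwining.} For \(f\in\mathscr D\), the vector \(Jf\) is again finitely supported. Each \(\Psi_\theta e_c\) solves the eigenvalue equation pointwise, and \(J\) is a real symmetric band matrix. A finite summation by parts therefore gives
\[
\Psi_\theta^*Jf=2a\cos\theta\,\Psi_\theta^*f .
\]
Since \(J\) commutes with \(P_{\mathrm{ac}}\), \eqref{eq:unitary-intertwining} holds on the dense set \(P_{\mathrm{ac}}\mathscr D\). The operator \(J\) and multiplication by \(2a\cos\theta\) are both bounded, so the identity extends by continuity.

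\emph{Inverse formula.} Consider the weak integral \(\langle \frac1{2\pi}\int\Psi_\theta\widehat f\,d\theta, g\rangle\) for \(g\in\mathscr D\). It equals
\[
\frac1{2\pi}\int\langle\widehat f(\theta),\widehat g_0(\theta)\rangle\,d\theta=\langle\mathcal F_{\mathrm{ac}}P_{\mathrm{ac}}f,\mathcal F_{\mathrm{ac}}P_{\mathrm{ac}}g\rangle .
\]
By the isometry this is \(\langle P_{\mathrm{ac}}f,g\rangle\). This is \eqref{eq:unitary-scattering-inverse} in the weak sense, and it agrees with \eqref{eq:standard-expansion} on \(\mathscr D\). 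The adjoint \(\mathcal F_{\mathrm{ac}}^*\) is thus given by \(h\mapsto\frac1{2\pi}\int\Psi_\theta h\,d\theta\), interpreted weakly on \(\mathscr D\).

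\emph{Surjectivity (main obstacle).} We must show \(\ker\mathcal F_{\mathrm{ac}}^*=0\). Suppose \(h\in L^2\) satisfies \(\int\langle h(\theta),\Psi_\theta^*g\rangle\,d\theta=0\) for all \(g\in\mathscr D\).

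First, the range is invariant under multiplication by every polynomial in \(\cos\theta\), by the intertwining. Polynomials in \(\cos\theta\) are dense in \(C[0,\pi]\). Hence \(h\perp \phi\,\widehat g_0\) for all bounded measurable \(\phi\), and so \(\langle h(\theta),\Psi_\theta^*g\rangle=0\) for a.e.\ \(\theta\), for each \(g\in\mathscr D\).

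Second, take \(g\) to be the delta at the first and second vertices of tail \(c\), for each \(c\); this is a countable family, so the a.e.\ statements hold simultaneously. These give \(h(\theta)\perp\bigl(e^{in\theta}I_r+e^{-in\theta}S_\theta\bigr)^*e_c\) for \(n=1,2\). Hence
\[
(e^{in\theta}+e^{-in\theta}S_\theta^*)h=0,\qquad n=1,2 .
\]
Subtracting suitable multiples eliminates \(S_\theta^*h\) and leaves \((e^{i\theta}-e^{3i\theta})h=0\), up to the invertible factor. Since \(e^{2i\theta}\ne1\) on \((0,\pi)\), this forces \(h(\theta)=0\) a.e., so \(h=0\).

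Thus \(\mathcal F_{\mathrm{ac}}\) is unitary. The delicate points are the measurability and countability bookkeeping in the second step, and using the fact that \(\Theta_F\) is null.
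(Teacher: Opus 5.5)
Your proof is correct. The isometry, intertwining and inverse-formula steps are the same as the paper's; the real difference is in surjectivity.

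\textbf{The paper's route.} The range of $\mathcal F_{\mathrm{ac}}$ is a closed reducing subspace for multiplication by $2a\cos\theta$, so it is given by a measurable field of subspaces of $\mathbb C^r$. A proper range would then have fiber dimension less than $r$ on a set of positive measure. That contradicts the multiplicity-$r$ statement of Proposition~\ref{prop:spectral-type}, which itself rests on Kato--Rosenblum.

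\textbf{Your route.} You show directly that $(\operatorname{ran}\mathcal F_{\mathrm{ac}})^\perp=0$ in two steps:
\begin{enumerate}
\item $J$-invariance of $\mathscr D$ together with Weierstrass density of polynomials in $\cos\theta$ turns orthogonality into the pointwise a.e.\ statement $\langle\widehat g_0(\theta),h(\theta)\rangle=0$.
\item The $2r$ explicit vectors $\widehat{(\delta_{(c,n)})}_0(\theta)=G_\theta(n)^*e_c$, $n=1,2$, already span $\mathbb C^r$ at every $\theta\in(0,\pi)\setminus\Theta_F$, because $e^{2i\theta}\ne1$.
\end{enumerate}

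\textbf{What each buys.} Your argument is self-contained and more elementary. It needs neither trace-class perturbation theory nor the classification of reducing subspaces of multiplication operators, and it re-derives the a.e.\ multiplicity $r$ as a consequence, together with completeness of the generalized eigenfunctions. The paper's argument is shorter once Proposition~\ref{prop:spectral-type} is available, but it imports that heavier input.

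\textbf{Two harmless slips.}
\begin{itemize}
\item In the general model, $H_C$ and $V$ may be complex. So $J$ is Hermitian rather than real symmetric. Hermitian symmetry with finitely many nonzero entries per row is all the summation by parts uses.
\item Orthogonality of $h(\theta)$ to $G_\theta(n)^*e_c$ for every $c$ gives $G_\theta(n)h=(e^{in\theta}I_r+e^{-in\theta}S_\theta)h=0$, with $S_\theta$ rather than $S_\theta^*$. The elimination between $n=1$ and $n=2$ goes through unchanged.
\end{itemize}
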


\begin{proof}
Equation \eqref{eq:standard-parseval} makes the transform an isometry on the
image under \(P_{\mathrm{ac}}\) of the finitely supported vectors, which is
dense in \(\mathscr H_{\mathrm{ac}}\), and proves that it vanishes on the pure
point component.  Hence it has a unique isometric extension.  The formal
eigenfunction equation and finite
summation give \eqref{eq:unitary-intertwining} first on a dense subspace and
then on all of \(\mathscr H_{\mathrm{ac}}\), since \(J\) is bounded.

The range is therefore a closed reducing subspace for multiplication by
\(2a\cos\theta\).  By Proposition~\ref{prop:spectral-type}, its spectral
multiplicity is \(r\) almost everywhere on the band.  Because
\(\theta\mapsto2a\cos\theta\) is one-to-one on \((0,\pi)\), every reducing
subspace of the target is described by a measurable family of subspaces of
\(\mathbb C^r\).  If the range were proper, the fiber dimension would be less
than \(r\) on a set of positive measure, contradicting the multiplicity
statement.  Thus the range is the whole target.  Formula
\eqref{eq:unitary-scattering-inverse} is the adjoint of this unitary map and
agrees with \eqref{eq:standard-expansion} on the dense subspace of finitely
supported vectors.
\end{proof}

\subsection{Arithmetic components and incoming normalization}
\label{subsec:automorphic-normalization}

\begin{setup}[Fixed-level components and cusp data]
\label{setup:global-automorphic}
For the statements involving classical Eisenstein series, let \(k\) be a
global function field with constant field \(\mathbb F_q\), let \(\infty\) be
a degree-one place, and identify \(k_\infty\) with \(K\).  Write
\(\mathbf G=\mathrm{PGL}_2\), and put
\[
 G_\infty=\mathbf G(k_\infty),
 \qquad G_f=\mathbf G(\mathbb A_{k,f}),
\]
where \(\mathbb A_k\) is the adele ring of \(k\) and
\(\mathbb A_{k,f}\) denotes its finite-adele factor.  Fix a compact open
subgroup \(K_f<G_f\).  Choose representatives
\(g_1,\ldots,g_h\in G_f\) for the finite class set
\(\mathbf G(k)\backslash G_f/K_f\), whose finiteness follows from
function-field reduction theory \cite{Harder,HarderAutomorphic}, and put
\begin{equation}\label{eq:adelic-component-lattices}
 \Gamma_j
 =\mathbf G(k)\cap g_jK_fg_j^{-1}
 <G_\infty.
\end{equation}
Here the intersection is formed using the diagonal copy of
\(\mathbf G(k)\), and the resulting group is viewed through its
\(\infty\)-component.  There are canonical decompositions
\begin{align}
 \mathbf G(k)\backslash\mathbf G(\mathbb A_k)/K_f
 &\cong\bigsqcup_{j=1}^h\Gamma_j\backslash G_\infty,
 \label{eq:adelic-component-decomposition}\\
 \mathbf G(k)\backslash\mathbf G(\mathbb A_k)/(K_f\mathcal K)
 &\cong\bigsqcup_{j=1}^h
 \Gamma_j\backslash G_\infty/\mathcal K.
 \label{eq:adelic-vertex-decomposition}
\end{align}
Normalize Haar measure on \(G_f\) by
\(\operatorname{vol}(K_f)=1\), and use the quotient measure induced from
this measure and the local Haar measure on \(G_\infty\) fixed in
Setup~\ref{setup:quotient-adjacency}.  With these conventions, the
decompositions above are measure preserving on every component; in
particular, no component-dependent scalar is suppressed.

Let \(\varpi=t^{-1}\) under the chosen identification
\(k_\infty\cong\mathbb F_q(\!(t^{-1})\!)\).  We use the
\emph{unnormalized spherical Hecke operator at \(\infty\)}
\begin{equation}\label{eq:infinity-hecke-operator}
 (\mathsf T_\infty F)(g)
 =\int_{\mathcal K\diag(\varpi,1)\mathcal K}F(gh)\,dh,
 \qquad \operatorname{vol}(\mathcal K)=1.
\end{equation}
The double coset is the disjoint union of \(q+1\) right
\(\mathcal K\)-cosets.  Hence, on right \(\mathcal K\)-invariant functions,
\(\mathsf T_\infty\) is exactly the \((q+1)\)-neighbor adjacency operator
\eqref{eq:quotient-adjacency}, rather than its probability-normalized version.

Fix one component \(j_0\), set \(\Gamma=\Gamma_{j_0}\), and let
\(Y=\Gamma\backslash\mathcal T\).  Write \(r\) for the number of cusps of
this component.  All identities below are componentwise.
The measure on this component is induced from the local Haar measure on
\(G_\infty\) normalized by \(\operatorname{vol}(\mathcal K)=1\), as in
Setup~\ref{setup:quotient-adjacency}; no unmentioned Tamagawa or finite-level
scalar is absorbed into \(\lambda(v)=|\Gamma_v|^{-1}\).

For every cusp \(c\) of \(Y\), choose its endpoint
\(\xi_c\in\mathbb P^1(k)\), let
\(P_c=\operatorname{Stab}_{\mathbf G}(\xi_c)\) be the corresponding
\(k\)-parabolic, and let \(N_c\) be its unipotent radical.  Put
\[
 \Gamma_c=\Gamma\cap P_c(k),
 \qquad \Gamma_{N,c}=\Gamma\cap N_c(k).
\]
Let \(\alpha_c:P_c\to\mathbb G_m\) be the root character: after a
\(k\)-rational conjugation carrying \(\xi_c\) to \([1:0]\), it sends
the class of \(\left(\begin{smallmatrix}a&*\\0&d\end{smallmatrix}\right)\)
to \(a/d\).  Normalize the valuation by
\(v_\infty(t^{-1})=1\), and choose the integer-valued Busemann height \(h_c\),
increasing towards \(\xi_c\), so that
\begin{equation}\label{eq:busemann-character-root}
 h_c(pg\mathcal K)-h_c(g\mathcal K)
 =\nu_c(p)=-v_\infty\!\left(\alpha_c(p)\right)
 \qquad(p\in P_c(K)).
\end{equation}

By rank-one reduction theory, the cusp may be cut so that its remaining
vertices have representatives \(g_{c,n}\mathcal K\), \(n\ge1\), with
\(h_c(g_{c,n}\mathcal K)=n\).  Put
\(\mathscr H_c=\{x\in V(\mathcal T):h_c(x)\ge1\}\); its level sets
\(h_c^{-1}(n)\) are the corresponding horospheres.  We choose the cut
sufficiently deep that distinct \(\Gamma\)-translates of the retained
horoballs \(\mathscr H_c\) are disjoint, the stabilizer of \(\mathscr H_c\) is
\(\Gamma_c\), and
\begin{equation}\label{eq:one-vertex-per-horosphere}
 \Gamma_c\backslash h_c^{-1}(n)
 \quad\text{maps to the single quotient vertex }(c,n)
 \quad(n\ge1).
\end{equation}
These are the only
reduction-theoretic properties used below
\cite{Harder,HarderAutomorphic,Serre}.
\end{setup}

\begin{lemma}[Arithmetic cusp stabilizers preserve height]
\label{lem:arithmetic-cusp-height}
For every \(\gamma_c\in\Gamma_c\), one has
\(\nu_c(\gamma_c)=0\).  Consequently \(h_c\) is
\(\Gamma_c\)-invariant and
\begin{equation}\label{eq:explicit-spherical-flat-section}
 f_{c,s}(g)=q^{s h_c(g\mathcal K)}
\end{equation}
is a well-defined left-\(\Gamma_c\)-invariant, right-\(\mathcal K\)-invariant
spherical flat section.  More explicitly,
\begin{equation}\label{eq:spherical-flat-section-equivariance}
 f_{c,s}(pgk)=q^{s\nu_c(p)}f_{c,s}(g)
 \qquad(p\in P_c(K),\ k\in\mathcal K).
\end{equation}
\end{lemma}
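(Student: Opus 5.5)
We must show that every \(\gamma_c\in\Gamma_c=\Gamma\cap P_c(k)\) satisfies \(\nu_c(\gamma_c)=0\). Two routes are available: a discreteness argument (\(\Gamma\) is discrete while \(\mathcal K\) is compact open) and a direct arithmetic argument. We give the arithmetic one and mention the geometric cross-check at the end.

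After the \(k\)-rational conjugation carrying \(\xi_c\) to \([1:0]\), write \(\gamma_c=\left(\begin{smallmatrix}a&b\\0&d\end{smallmatrix}\right)\) with \(a,b,d\in k\), modulo scalars. We have \(\alpha_c(\gamma_c)=a/d\in k^\times\). Since \(\Gamma\) is commensurable, after this conjugation, with a subgroup of \(\mathbf G(k)\cap g K_f g^{-1}\), the elements \(\gamma_c\) are integral at every finite place, up to the scalar ambiguity. We normalize the scalar locally at each finite place \(v\) so that the entries are \(v\)-integral and the determinant \(ad\) is a \(v\)-unit. Then \(a\) and \(d\) are both \(v\)-units, so \(v(a/d)=0\) at every finite place.

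The product formula for \(k\) then gives
\[
\deg(\infty)\,v_\infty(a/d)=-\sum_{v\ne\infty}\deg(v)\,v(a/d)=0 .
\]
Hence \(v_\infty(\alpha_c(\gamma_c))=0\), and by \eqref{eq:busemann-character-root} this is \(\nu_c(\gamma_c)=0\).

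The main obstacle is making the local scalar normalization rigorous for an arbitrary compact open \(K_f\) and for the conjugated group. The issue is that conjugation by an element of \(\mathbf G(k)\) only changes the relevant compact open subgroup into another one. The resulting subgroup is still contained, up to finite index, in a maximal compact subgroup at each place. Its elements therefore have \(v\)-unit ``\(\det\) modulo squares of scalars,'' and \(v(a/d)\) stays bounded on the whole group. Since \(\gamma\mapsto a/d\) is a homomorphism on the Borel subgroup, its valuation is a homomorphism to \(\mathbb Z\) with bounded image, so it is identically \(0\). This boundedness-of-homomorphism trick avoids choosing scalars carefully, and I would use it at each finite place.

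As a geometric cross-check, reduction theory also gives the result. By \eqref{eq:one-vertex-per-horosphere}, \(\Gamma_c\) preserves \(\mathscr H_c\), and a nonzero \(\nu_c\) would shift horospheres and map \(h_c^{-1}(1)\) into \(h_c^{-1}(1+\nu_c)\). That would identify the distinct quotient vertices \((c,1)\) and \((c,1+\nu_c)\), contradicting the statement that they are different vertices of the ray.

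With \(\nu_c\equiv0\) on \(\Gamma_c\), the rest is formal. Invariance of \(h_c\) follows from \eqref{eq:busemann-character-root} with \(p=\gamma_c\). Right \(\mathcal K\)-invariance of \(f_{c,s}\) holds because \(h_c\) is a function on vertices \(g\mathcal K\). The equivariance \eqref{eq:spherical-flat-section-equivariance} is \(q^{s h_c(pg\mathcal K)}=q^{s\nu_c(p)}q^{s h_c(g\mathcal K)}\), which is again \eqref{eq:busemann-character-root}.
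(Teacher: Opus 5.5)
Your proof is correct and is essentially the paper's argument. The paper notes that the finite-adelic component of \(\gamma_c\) lies in the compact group \(g_{j_0}K_fg_{j_0}^{-1}\) and in \(P_c(\mathbb A_{k,f})\), so \(\alpha_c(\gamma_c)\) lies in a compact subgroup of \(k_v^\times\) and has valuation zero at every finite place; your bounded-homomorphism-to-\(\mathbb Z\) step is exactly this, and the product formula and the Busemann relation then finish as you say. No commensurability or local scalar normalization is needed, since here \(\Gamma=\mathbf G(k)\cap g_{j_0}K_fg_{j_0}^{-1}\) exactly and \(\alpha_c\) is already invariant under scalars.
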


\begin{proof}
The finite component of \(\gamma_c\) lies in the compact group
\(g_{j_0}K_fg_{j_0}^{-1}\) and in \(P_c(\mathbb A_{k,f})\).  Hence
\(\alpha_c(\gamma_c)\) lies, at every finite place \(v\), in a compact
subgroup of \(k_v^\times\); its valuation is therefore zero.  Since
\(\alpha_c(\gamma_c)\in k^\times\), the product formula and
\(\deg(\infty)=1\) give
\[
 v_\infty\!\left(\alpha_c(\gamma_c)\right)
 =-\sum_{v\ne\infty}\deg(v)
   v\!\left(\alpha_c(\gamma_c)\right)=0.
\]
Equation~\eqref{eq:busemann-character-root} proves the claim and the stated
invariance properties.
\end{proof}

Normalize Haar measure \(du_c\) on \(N_c(K)\) by
\[
 \operatorname{vol}\bigl(\Gamma_{N,c}\backslash N_c(K),du_c\bigr)=1,
\]
and, for a left-\(\Gamma\)-invariant function \(\varphi\), define its
componentwise unipotent constant term by
\begin{equation}\label{eq:component-unipotent-constant-term}
 \operatorname{CT}_c\varphi(g)
 =\int_{\Gamma_{N,c}\backslash N_c(K)}\varphi(ug)\,du_c.
\end{equation}
The quotient in this integral is compact for the arithmetic cusp under
consideration.

\begin{lemma}[A deep ray is its normalized constant term]
\label{lem:ray-equals-constant-term}
If \(\varphi\) is left \(\Gamma\)-invariant and right
\(\mathcal K\)-invariant, then, for every retained cusp vertex,
\begin{equation}\label{eq:ray-constant-term-identification}
 \operatorname{CT}_c\varphi(g_{c,n})=\varphi(g_{c,n}),
 \qquad n\ge1.
\end{equation}
Thus no horospherical volume is hidden in the passage from automorphic
constant terms to functions on the quotient ray.
\end{lemma}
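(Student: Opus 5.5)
The plan is to show that the integrand \(u\mapsto\varphi(ug_{c,n})\) is constant on \(N_c(K)\). The identity then follows from the normalization \(\operatorname{vol}(\Gamma_{N,c}\backslash N_c(K),du_c)=1\). Since \(\varphi\) is right \(\mathcal K\)-invariant, \(\varphi(ug_{c,n})\) depends only on the vertex \(ug_{c,n}\mathcal K\) of \(\mathcal T\). So it suffices to show that, for every \(u\in N_c(K)\), this vertex lies in the \(\Gamma\)-orbit of \(g_{c,n}\mathcal K\).

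First I compute the height. The unipotent radical lies in the kernel of the root character, so \(\nu_c(u)=-v_\infty(\alpha_c(u))=0\). Then \eqref{eq:busemann-character-root} gives
\[
 h_c(ug_{c,n}\mathcal K)=h_c(g_{c,n}\mathcal K)=n\ge1 .
\]
Hence \(ug_{c,n}\mathcal K\) lies on the horosphere \(h_c^{-1}(n)\) inside the retained horoball \(\mathscr H_c\). Property \eqref{eq:one-vertex-per-horosphere} says that \(\Gamma_c\backslash h_c^{-1}(n)\) maps to the single quotient vertex \((c,n)\). Consequently \(ug_{c,n}\mathcal K=\gamma g_{c,n}\mathcal K\) for some \(\gamma\in\Gamma_c\subset\Gamma\). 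Left \(\Gamma\)-invariance of \(\varphi\) then gives \(\varphi(ug_{c,n})=\varphi(\gamma g_{c,n}k)=\varphi(g_{c,n})\) for a suitable \(k\in\mathcal K\).

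The function \(u\mapsto\varphi(ug_{c,n})\) is left \(\Gamma_{N,c}\)-invariant, so it descends to the compact quotient \(\Gamma_{N,c}\backslash N_c(K)\). There it is the constant \(\varphi(g_{c,n})\). Integrating against \(du_c\), which has total mass one, yields \eqref{eq:ray-constant-term-identification}.

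The only delicate point is confirming that the representative vertex \(g_{c,n}\mathcal K\) itself lies on the horosphere \(h_c^{-1}(n)\) whose quotient is the single vertex \((c,n)\). This is exactly how the \(g_{c,n}\) were chosen in Setup~\ref{setup:global-automorphic}. I would also check that \(u\) acts on the left of \(g_{c,n}\), which is the side where \eqref{eq:busemann-character-root} applies. With these two points settled, the argument needs no measure-theoretic estimate beyond the normalization of \(du_c\).
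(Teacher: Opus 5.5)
Your proposal is correct and follows the paper's argument. \(N_c(K)\) preserves the horosphere \(h_c^{-1}(n)\), the choice of cut sends that horosphere to the single quotient vertex \((c,n)\), so the integrand is constant, and the volume-one normalization of \(du_c\) finishes the proof. The only difference is that you justify horosphere preservation explicitly, via \(\alpha_c|_{N_c}\equiv1\) and \eqref{eq:busemann-character-root}, where the paper simply asserts it; your refinement \(\gamma\in\Gamma_c\) is unnecessary, since left \(\Gamma\)-invariance of \(\varphi\) already suffices.
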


\begin{proof}
The group \(N_c(K)\) preserves each horosphere based at \(\xi_c\).  By the
choice of the cut, every vertex of the horosphere through
\(g_{c,n}\mathcal K\) maps to the single quotient vertex \((c,n)\).
Therefore \(\varphi(ug_{c,n})=\varphi(g_{c,n})\) for every \(u\in N_c(K)\).
The normalization of \(du_c\) in \eqref{eq:component-unipotent-constant-term}
now gives \eqref{eq:ray-constant-term-identification}.
\end{proof}

For \(\Re s>1\), define the \emph{raw} lattice Poincar\'e series
\begin{equation}\label{eq:classical-component-Eisenstein-sum}
 E_c^0(g,s)
 =\sum_{\gamma\in\Gamma_c\backslash\Gamma}
   f_{c,s}(\gamma g),
 \qquad g\in G_\infty.
\end{equation}
The series converges locally uniformly and absolutely in this half-plane.  By
rank-one Eisenstein theory over function fields, it and its constant terms
continue meromorphically in \(s\); in the notation of
\cite{LiEisenstein}, convergence, the constant-term formula, and the
functional equation are given respectively by Theorem~2.3, Theorem~3.1, and
Theorem~5.2.
For a cusp \(d\), write
\(E_c^0(d,n;s)=E_c^0(g_{d,n},s)\).

\begin{lemma}[Incoming normalization of the raw Poincar\'e series]
\label{lem:raw-eisenstein-normalization}
For \(\Re s>1\), the series \(E_c^0(\,\cdot\,,s)\) is an adjacency
eigenfunction with
eigenvalue \(q^s+q^{1-s}\).  For the cusp representatives and height origins
fixed in Setup~\ref{setup:global-automorphic}, its deep-cusp expansion is
\begin{equation}\label{eq:classical-component-constant-term}
 E_c^0(d,n;s)
 =\delta_{dc}q^{ns}+M_{dc}(s)q^{n(1-s)}.
\end{equation}
Here \(\delta_{dc}\) is the Kronecker delta, and the coefficients are assembled
into the classical constant-term matrix
\begin{equation}\label{eq:classical-constant-term-matrix-defined}
 M(s)=\bigl(M_{dc}(s)\bigr)_{1\le d,c\le r}.
\end{equation}
Both the eigenvalue equation and the expansion continue meromorphically in
\(s\), with coincident radial modes interpreted by continuation.
In particular, the incoming matrix of the raw column family
\(E_s^0=(E_1^0,\ldots,E_r^0)\) is exactly \(I_r\).  Consequently no
additional scalar is present in the classical constant-term normalization
used in this paper, and we set
\begin{equation}\label{eq:classical-family-defined}
 E_s^{\mathrm{cl}}=E_s^0.
\end{equation}
\end{lemma}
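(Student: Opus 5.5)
The proof has three parts: the Hecke eigenvalue, the deep-cusp expansion with its incoming coefficient, and meromorphic continuation. I will use only Lemma~\ref{lem:arithmetic-cusp-height}, Lemma~\ref{lem:ray-equals-constant-term}, and the cut properties in Setup~\ref{setup:global-automorphic}.

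\emph{Eigenvalue.} For \(\Re s>1\), I first show that \(f_{c,s}\) is an adjacency eigenfunction on the tree. Among the \(q+1\) neighbours of a vertex \(x\), exactly one has height \(h_c(x)+1\), namely the next vertex on the ray toward \(\xi_c\). The other \(q\) neighbours have height \(h_c(x)-1\). Therefore
\[
 \sum_{y\sim x}q^{s h_c(y)}=(q^s+q\cdot q^{-s})q^{s h_c(x)}=(q^s+q^{1-s})q^{sh_c(x)}.
\]
The Hecke operator \(\mathsf T_\infty\) is right convolution, so it commutes with the left translations \(g\mapsto\gamma g\). Absolute and locally uniform convergence of \eqref{eq:classical-component-Eisenstein-sum} then lets me apply \(\mathsf T_\infty\) termwise, and this gives the eigenvalue equation for \(E_c^0\).

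\emph{Deep-cusp expansion.} Fix a cusp \(d\). By Lemma~\ref{lem:ray-equals-constant-term}, \(E_c^0(d,n;s)=\operatorname{CT}_d E_c^0(g_{d,n},s)\). I then split the sum over \(\Gamma_c\backslash\Gamma\).

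First take the identity coset when \(d=c\). By Lemma~\ref{lem:arithmetic-cusp-height}, \(f_{c,s}\) is well defined on \(\Gamma_c\)-cosets. Since \(h_c(g_{c,n}\mathcal K)=n\), this term contributes exactly \(q^{ns}\), with coefficient \(1\).

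Next take any other coset. The disjointness of the retained horoballs (the \(\Gamma\)-translates of \(\mathscr H_c\)) implies that \(\gamma g_{d,n}\) lies outside \(\mathscr H_c\) unless \(\gamma\in\Gamma_c\) and \(d=c\). This is the step that isolates the incoming term. For the remaining terms I use rank-one constant-term theory, as in \cite[Thm.~3.1]{LiEisenstein}: the remaining part of the constant term at \(d\) lies in the span of the two radial solutions of the eigenvalue recurrence along the ray. That recurrence is the homogeneous ray recurrence of Proposition~\ref{prop:unitary-normal-form}, and its solutions are \(q^{ns}\) and \(q^{n(1-s)}\).

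It remains to see that the non-identity terms contribute no \(q^{ns}\) component. For \(\Re s>1\) these terms grow at most like \(q^{n(1-s)}\) up to the contribution from the identity coset. I would verify this by bounding \(\sum_{\gamma\ne\Gamma_c}f_{c,s}(\gamma g_{d,n})\) using the Bruhat decomposition of \(\Gamma_c\backslash\Gamma/\Gamma_d\). The resulting \(n\)-dependence is \(q^{n(1-s)}\) times an \(n\)-independent Dirichlet-type series, and that series defines \(M_{dc}(s)\).

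\emph{Main obstacle.} I expect the hardest step to be excluding a \(q^{ns}\) contribution from the non-identity double cosets. The Bruhat computation should show that the \(N_d\)-integral of each big-cell term is a multiple of the opposite character. This is exactly Li's constant-term formula, restricted to the cut where the two descriptions agree by Lemma~\ref{lem:ray-equals-constant-term}.

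\emph{Continuation.} The eigenvalue equation is a finite linear identity among meromorphic functions, so it continues meromorphically. The expansion continues by identity of meromorphic functions, using \cite[Thm.~5.2]{LiEisenstein}. At coincident modes (\(s=1/2\)) the identities are read by continuation. Since the incoming coefficient is identically \(\delta_{dc}\), the incoming matrix of the raw family is \(I_r\), which justifies setting \(E_s^{\mathrm{cl}}=E_s^0\) in \eqref{eq:classical-family-defined}.
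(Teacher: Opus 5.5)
Your proposal is correct and follows essentially the same route as the paper. It uses the neighbour count for \(f_{c,s}\) with termwise application of the adjacency operator, Lemma~\ref{lem:ray-equals-constant-term} to replace the ray value by the \(N_d\)-constant term, the identity cell contributing exactly \(\delta_{dc}q^{ns}\), Li's constant-term formula (the big-cell intertwining integral turning \(s\)-equivariance into \((1-s)\)-equivariance) to show the other cells give only \(M_{dc}(s)q^{n(1-s)}\), and continuation via Li. The one difference is cosmetic: you isolate the identity coset through disjointness of the retained horoballs, whereas the paper uses the criterion \(\gamma\xi_d=\xi_c\) on the rational Bruhat cells. That disjointness only bounds each non-identity term individually, so, as you acknowledge, it is the unfolded \(N_d\)-integral, not the horoball argument, that actually excludes a second \(q^{ns}\) term.
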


\begin{proof}
At every vertex, one neighbor raises \(h_c\) by one and \(q\) neighbors lower
it by one.  Hence
\[
 Af_{c,s}=(q^s+q^{1-s})f_{c,s}.
\]
Absolute convergence for \(\Re s>1\) permits termwise application of \(A\),
which proves the eigenvalue equation there.

We compute the value on a retained \(d\)-ray through its constant term.  By
Lemma~\ref{lem:ray-equals-constant-term} and absolute convergence,
\begin{equation}\label{eq:raw-unfolding-start}
 E_c^0(d,n;s)
 =\int_{\Gamma_{N,d}\backslash N_d(K)}
   \sum_{\gamma\in\Gamma_c\backslash\Gamma}
   f_{c,s}(\gamma u g_{d,n})\,du_d,
\end{equation}
and the sum and integral may be interchanged: the unipotent quotient is
compact and the series converges locally uniformly there.  Fix a rational
minimal parabolic \(P<\mathbf G\).  Choose
\(\sigma_c,\sigma_d\in\mathbf G(k)\) such that
\[
 P_c=\sigma_cP\sigma_c^{-1},
 \qquad P_d=\sigma_dP\sigma_d^{-1},
\]
and let \(w\in\mathbf G(k)\) represent the nontrivial Weyl-group element.
The relative position of the summand indexed by \(\gamma\) is the double
coset of \(\sigma_c^{-1}\gamma\sigma_d\), and the rational Bruhat
decomposition is
\[
 P(k)\backslash\mathbf G(k)/P(k)
 =\{P(k),P(k)wP(k)\}.
\]
The identity cell condition
\(\sigma_c^{-1}\gamma\sigma_d\in P(k)\) is equivalent to
\(\gamma\xi_d=\xi_c\).  Since \(\gamma\in\Gamma\), this occurs precisely
when \(c=d\).  Taking \(\sigma_c=\sigma_d\) in that case, the elements in
the identity cell are exactly \(\Gamma\cap P_c(k)=\Gamma_c\), so they give
one element of \(\Gamma_c\backslash\Gamma\).  The normalization
\(\operatorname{vol}(\Gamma_{N,c}\backslash N_c(K))=1\) gives
\[
 \int_{\Gamma_{N,c}\backslash N_c(K)}
 f_{c,s}(u g_{c,n})\,du_c
 =f_{c,s}(g_{c,n})=q^{ns}.
\]
For the nonidentity cell, the unfolded sum and integral are the rank-one
standard intertwining term---that is, the open-Bruhat-cell unipotent
integral---applied to the \(c\)-cusp section.  This operator carries the
\(s\)-equivariance in
\eqref{eq:spherical-flat-section-equivariance} to \((1-s)\)-equivariance.
Since \(h_d(g_{d,n}\mathcal K)=n\), its value therefore has the form
\[
 M_{dc}(s)q^{n(1-s)},
\]
where \(M_{dc}(s)\) is independent of \(n\).  This is the nontrivial-Weyl-cell
term in the standard rank-one constant-term formula
\cite[Thm.~3.1]{LiEisenstein}; the disjoint Bruhat decomposition shows that it
cannot contribute a second \(q^{ns}\) term.  As a check, the eigenvalue
equation on the homogeneous \(d\)-ray has characteristic solutions
\(q^{ns}\) and \(q^{n(1-s)}\), so these are the only possible radial
dependences when they are distinct.  We obtain
\eqref{eq:classical-component-constant-term} for \(\Re s>1\).  The
meromorphic continuation recalled above proves the stated identities in
general; at parameters satisfying \(q^{2s-1}=1\), the coincident-root formula
is understood as this meromorphic continuation.  This is the cusp-coordinate
form of the usual constant-term calculation; compare
\cite[Chaps.~6--7]{LanglandsEisenstein}.
\end{proof}

\begin{remark}[Conversion from another incoming normalization]
\label{rem:incoming-matrix-normalization}
If a differently normalized classical or adelic column family
\(\widetilde E_s\) has incoming and outgoing matrices
\(A_{\mathrm{in}}(s)\) and \(B_{\mathrm{out}}(s)\), then ordinary right
multiplication of the column family gives, wherever
\(A_{\mathrm{in}}(s)\) is invertible,
\begin{equation}\label{eq:incoming-matrix-normalization}
 E_s^{\mathrm{cl}}=\widetilde E_sA_{\mathrm{in}}(s)^{-1},
 \qquad
 M(s)=B_{\mathrm{out}}(s)A_{\mathrm{in}}(s)^{-1}.
\end{equation}
Both identities extend meromorphically.  Thus
\eqref{eq:classical-family-defined} is the special case
\(A_{\mathrm{in}}(s)=I_r\), rather than an implicit convention.
\end{remark}

\subsection{Fixed-level adelic assembly}
\label{subsec:adelic-assembly}

Fix a rational minimal parabolic \(P<\mathbf G\), its fixed endpoint
\(\xi_P\in\mathbb P^1(k)\), its root character
\(\alpha:P\to\mathbb G_m\), and an integer-valued Busemann function
\(h_P:G_\infty/\mathcal K\to\mathbb Z\) satisfying
\begin{equation}\label{eq:standard-parabolic-height}
 h_P(p g\mathcal K)-h_P(g\mathcal K)
 =-v_\infty(\alpha(p))
 \qquad(p\in P(k),\ g\in G_\infty).
\end{equation}
For every component \(j\), repeat the preceding choices of cusp heights and
write
\[
 \mathscr C_j=\Gamma_j\backslash\mathbf G(k)/P(k),
 \qquad r_j=|\mathscr C_j|.
\]
For each \(c\in\mathscr C_j\), choose \(\gamma_{j,c}\in\mathbf G(k)\) with
\[
 c=\Gamma_j\gamma_{j,c}P(k),
 \qquad
 \xi_{j,c}=\gamma_{j,c}\xi_P,
 \qquad
 P_{j,c}=\gamma_{j,c}P\gamma_{j,c}^{-1}.
\]
Denote the resulting height, flat section, and raw Poincar\'e series by
\(h_{j,c}\), \(f_{j,c,s}\), and \(E_{j,c}^0\), respectively, and put
\begin{equation}\label{eq:component-column-family}
 E_{j,s}^{\mathrm{cl}}
 =(E_{j,c}^0)_{c\in\mathscr C_j}.
\end{equation}
Reduction theory makes these sets finite.  The complete fixed-level cusp set
is
\begin{equation}\label{eq:adelic-cusp-set}
 \mathscr C(K_f)=P(k)\backslash G_f/K_f.
\end{equation}
The finite component decomposition shows that every
\(\mathfrak c\in\mathscr C(K_f)\) is represented by at least one pair
\((j,c)\) through the map in
\eqref{eq:adelic-cusp-parametrization}; Proposition
\ref{prop:adelic-component-compatibility} proves that this pair is unique.
Choose that pair, set \(\gamma_{\mathfrak c}=\gamma_{j,c}\), and put
\[
 x_{\mathfrak c}=\gamma_{\mathfrak c}^{-1}g_j\in G_f.
\]
Set
\[
 \Gamma_{j,c}
 =\Gamma_j\cap
  \gamma_{\mathfrak c}P(k)\gamma_{\mathfrak c}^{-1},
\]
the stabilizer in \(\Gamma_j\) of the selected cusp.
Write \(h_c=h_{j,c}\) for the selected height at this cusp.  There is an integer
\(\kappa_{\mathfrak c}\) such that
\begin{equation}\label{eq:component-global-height-shift}
 h_c(g\mathcal K)
 =h_P(\gamma_{\mathfrak c}^{-1}g\mathcal K)
  +\kappa_{\mathfrak c}.
\end{equation}
Indeed, both sides before addition of the constant are integer-valued
Busemann functions based at \(\xi_{j,c}\), so their difference is constant;
this also fixes precisely the dependence on the chosen height origin.
Define a right \(K_f\mathcal K\)-invariant, left \(P(k)\)-invariant
cusp-coordinate flat section, for
\(g=(g_\infty,g_f)\in\mathbf G(\mathbb A_k)=G_\infty\times G_f\), by
\begin{equation}\label{eq:adelic-cusp-coordinate-section}
 \phi_{\mathfrak c,s}(g_\infty,g_f)
 =
 \begin{cases}
 q^{s\{h_P(p^{-1}g_\infty\mathcal K)+\kappa_{\mathfrak c}\}},
   &g_f=p x_{\mathfrak c}k_f,
     \quad p\in P(k),\ k_f\in K_f,\\[2mm]
 0,&g_f\notin P(k)x_{\mathfrak c}K_f.
 \end{cases}
\end{equation}
In this formula the same symbol \(p\) denotes the finite and infinite local
components of its diagonal adelic image.  To check independence of the
factorization, suppose
\(g_f=p x_{\mathfrak c}k_f=p'x_{\mathfrak c}k_f'\).  Then
\[
 a=p'^{-1}p\in
 P(k)\cap x_{\mathfrak c}K_fx_{\mathfrak c}^{-1}.
\]
The element \(\alpha(a)\in k^\times\) has valuation zero at every finite
place and hence, by the product formula and \(\deg(\infty)=1\), also at
\(\infty\).  Equation~\eqref{eq:standard-parabolic-height} therefore gives
\[
 h_P(p^{-1}g_\infty\mathcal K)
 =h_P(p'^{-1}g_\infty\mathcal K).
\]
Replacing \(g\) by \(rg\), with \(r\in P(k)\), replaces \(p\) by \(rp\)
and leaves \((rp)^{-1}rg_\infty=p^{-1}g_\infty\); right
\(K_f\mathcal K\)-invariance is immediate.  Thus the claimed invariance and
well-definedness hold.  The sections in
\eqref{eq:adelic-cusp-coordinate-section} have disjoint finite supports and
form a basis of the finite-dimensional cusp-coordinate inducing space
\begin{equation}\label{eq:adelic-inducing-space}
 \mathcal I_s(K_f)
 =\operatorname{span}\{\phi_{\mathfrak c,s}:
     \mathfrak c\in\mathscr C(K_f)\},
 \qquad
 \dim\mathcal I_s(K_f)=\sum_{j=1}^h r_j.
\end{equation}
For \(\Re s>1\), define
\begin{equation}\label{eq:adelic-cusp-eisenstein-series}
 E_{\mathfrak c}^{\mathrm{ad}}(g,s)
 =\sum_{\delta\in P(k)\backslash\mathbf G(k)}
   \phi_{\mathfrak c,s}(\delta g),
 \qquad g\in\mathbf G(\mathbb A_k),
 \qquad
 E_s^{\mathrm{ad}}
 =(E_{\mathfrak c}^{\mathrm{ad}})_{
      \mathfrak c\in\mathscr C(K_f)}.
\end{equation}
For now these series are defined in the absolute-convergence half-plane;
Proposition~\ref{prop:adelic-component-compatibility} identifies all their
component restrictions and thereby supplies their meromorphic continuation.

\begin{proposition}[Compatibility with the fixed-level adelic family]
\label{prop:adelic-component-compatibility}
The map
\begin{equation}\label{eq:adelic-cusp-parametrization}
 \bigsqcup_{j=1}^h
 \Gamma_j\backslash\mathbf G(k)/P(k)
 \longrightarrow P(k)\backslash G_f/K_f,
 \qquad
 \Gamma_j\gamma P(k)\longmapsto
 P(k)\gamma^{-1}g_jK_f,
\end{equation}
is a bijection.  If \(\mathfrak c\) corresponds to
\(c=\Gamma_j\gamma_{\mathfrak c}P(k)\), then, for \(\Re s>1\) and
\(1\le\ell\le h\),
\begin{equation}\label{eq:adelic-column-component-restriction}
 E_{\mathfrak c}^{\mathrm{ad}}(g_\infty,g_\ell;s)
 =
 \begin{cases}
 E_{j,c}^0(g_\infty,s),&\ell=j,\\
 0,&\ell\ne j,
 \end{cases}
\end{equation}
where \(E_{j,c}^0\) is the raw Poincar\'e series
\eqref{eq:classical-component-Eisenstein-sum} for the \(c\)-th cusp of
\(\Gamma_j\).  Consequently, under the measure-preserving decomposition
\eqref{eq:adelic-component-decomposition},
\begin{equation}\label{eq:adelic-component-eisenstein-compatibility}
 E_s^{\mathrm{ad}}
 \longleftrightarrow
 \bigoplus_{j=1}^h E_{j,s}^{\mathrm{cl}}
\end{equation}
as meromorphic column families.  In the componentwise volume-one
constant-term convention of
\eqref{eq:component-unipotent-constant-term}, their full incoming matrix is
\(\bigoplus_{j=1}^h I_{r_j}=I_{\sum_jr_j}\), and \(\mathsf T_\infty\)
corresponds exactly to the direct sum of the component adjacency operators.
\end{proposition}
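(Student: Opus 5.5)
We prove the three assertions in order: the bijection of cusp sets, the componentwise restriction formula \eqref{eq:adelic-column-component-restriction}, and the transfer of normalizations.

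\emph{Bijection.} The map \eqref{eq:adelic-cusp-parametrization} is well defined: replacing $\gamma$ by $\delta\gamma p$ with $\delta\in\Gamma_j$ and $p\in P(k)$ gives $p^{-1}\gamma^{-1}\delta^{-1}g_jK_f$. Since $\delta\in g_jK_fg_j^{-1}$, we have $\delta^{-1}g_jK_f=g_jK_f$, so the class is unchanged. For surjectivity, take $x\in G_f$. By the class decomposition, $x^{-1}=\gamma^{-1}g_jk$ for some $j$, some $\gamma\in\mathbf G(k)$ (inverting $\mathbf G(k)\backslash G_f/K_f$ appropriately) and $k\in K_f$. Hence $P(k)xK_f=P(k)k^{-1}g_j^{-1}\gamma K_f$; we recast this carefully using $P(k)\backslash G_f/K_f$ and the chosen representatives $g_j$. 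For injectivity, suppose $P(k)\gamma^{-1}g_jK_f=P(k)\gamma'^{-1}g_\ell K_f$. Projecting to $\mathbf G(k)\backslash G_f/K_f$ forces $\ell=j$. Then $\gamma'^{-1}g_j=p\gamma^{-1}g_jk$, so $\gamma'p\gamma^{-1}\in\mathbf G(k)\cap g_jK_fg_j^{-1}=\Gamma_j$. It follows that $\Gamma_j\gamma P(k)=\Gamma_j\gamma' P(k)$, after the usual inversion bookkeeping.

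\emph{Restriction formula.} Fix $\ell$ and evaluate at $g=(g_\infty,g_\ell)$. The summand $\phi_{\mathfrak c,s}(\delta g)$ is nonzero only if $\delta g_\ell\in P(k)x_{\mathfrak c}K_f=P(k)\gamma_{\mathfrak c}^{-1}g_jK_f$. This is equivalent to $\gamma_{\mathfrak c}\,p\,\delta\in g_jK_fg_\ell^{-1}\cap\mathbf G(k)$ for some $p\in P(k)$. That set is empty unless $\ell=j$, which gives the vanishing case. When $\ell=j$, the condition says $\gamma_{\mathfrak c}p\delta\in\Gamma_j$. Hence the nonzero $\delta$'s are exactly $P(k)\gamma_{\mathfrak c}^{-1}\Gamma_j$. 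Therefore $\delta\mapsto\gamma=\gamma_{\mathfrak c}p\delta$ gives a bijection
\[P(k)\gamma_{\mathfrak c}^{-1}\Gamma_j/P(k)\ \longleftrightarrow\ \Gamma_{j,c}\backslash\Gamma_j,\]
where the left side is taken modulo $P(k)$ on the left. This holds because $\gamma_{\mathfrak c}^{-1}\gamma$ and $\gamma_{\mathfrak c}^{-1}\gamma'$ define the same $P(k)$-coset exactly when $\gamma'\gamma^{-1}\in\gamma_{\mathfrak c}P(k)\gamma_{\mathfrak c}^{-1}$. For $\delta=\gamma_{\mathfrak c}^{-1}\gamma$, the factorization $\delta g_j=\gamma_{\mathfrak c}^{-1}g_j\cdot(g_j^{-1}\gamma g_j)$ with $g_j^{-1}\gamma g_j\in K_f$ lets us take $p=1$. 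Then \eqref{eq:adelic-cusp-coordinate-section} and \eqref{eq:component-global-height-shift} give
\[\phi_{\mathfrak c,s}(\delta g)=q^{s\{h_P(\gamma_{\mathfrak c}^{-1}\gamma g_\infty\mathcal K)+\kappa_{\mathfrak c}\}}=q^{sh_c(\gamma g_\infty\mathcal K)}=f_{j,c,s}(\gamma g_\infty).\]
Summing over $\Gamma_{j,c}\backslash\Gamma_j$ reproduces $E^0_{j,c}$ exactly, with no scalar, since both series converge absolutely for $\Re s>1$. The main obstacle is this coset bookkeeping: we must show that the $P(k)$-ambiguity in $p$ is absorbed by the well-definedness check already carried out after \eqref{eq:adelic-cusp-coordinate-section}, which relies on the product-formula argument. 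We must also show that $\Gamma_{j,c}$ matches the stabilizer $\Gamma_c$ used in \eqref{eq:classical-component-Eisenstein-sum}.

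\emph{Consequences.} The identity \eqref{eq:adelic-column-component-restriction} transfers meromorphic continuation from the componentwise theory recalled before Lemma~\ref{lem:raw-eisenstein-normalization}. It yields \eqref{eq:adelic-component-eisenstein-compatibility} under the measure-preserving decomposition \eqref{eq:adelic-component-decomposition}. Lemma~\ref{lem:raw-eisenstein-normalization} applied to each component gives incoming matrix $I_{r_j}$, and off-component blocks vanish by the $\ell\ne j$ case, so the full incoming matrix is $I_{\sum_j r_j}$. Finally, $\mathsf T_\infty$ acts only through right translation at $\infty$, so it preserves each component of \eqref{eq:adelic-vertex-decomposition}. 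There it is the $(q+1)$-neighbour adjacency operator, as noted after \eqref{eq:infinity-hecke-operator}.
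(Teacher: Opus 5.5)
Your proposal follows the paper's proof essentially step for step: the same well-definedness check, the same injectivity argument via $\eta=\gamma'p\gamma^{-1}\in\mathbf G(k)\cap g_jK_fg_j^{-1}=\Gamma_j$, the same support condition forcing $\ell=j$, the same identification of the surviving cosets with $\Gamma_{j,c}\backslash\Gamma_j$, and the same evaluation of each summand using $\gamma g_j\in g_jK_f$ together with \eqref{eq:component-global-height-shift}.

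The one step that does not work as written is \emph{surjectivity}. Decomposing $x^{-1}$ gives
\[
P(k)xK_f=P(k)k^{-1}g_j^{-1}\gamma K_f.
\]
This is not of the form $P(k)\gamma'^{-1}g_{j'}K_f$, and ``recast this carefully'' does not supply the missing argument. Apply the class decomposition to $x$ itself instead. Write $x=\delta g_jk_f$ with $\delta\in\mathbf G(k)$ and $k_f\in K_f$. Then $P(k)xK_f=P(k)\delta g_jK_f$, which is the image of $\Gamma_j\delta^{-1}P(k)$.

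The two points you flag as the main obstacles are already settled by the setup.
\begin{itemize}
\item Left $P(k)$-invariance of $\phi_{\mathfrak c,s}$ was verified immediately after \eqref{eq:adelic-cusp-coordinate-section}, using the product formula. You may therefore evaluate each surviving summand at the representative $\delta=\gamma_{\mathfrak c}^{-1}\gamma$ with $p=1$, as you do.
\item The equality $\Gamma_{j,c}=\Gamma_j\cap\gamma_{\mathfrak c}P(k)\gamma_{\mathfrak c}^{-1}=\Gamma_j\cap P_{j,c}(k)$ holds by definition, since $\gamma_{\mathfrak c}=\gamma_{j,c}$ and $P_{j,c}=\gamma_{j,c}P\gamma_{j,c}^{-1}$. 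So it is exactly the cusp stabilizer used in the raw Poincar\'e series.
\end{itemize}
Finally, the coset space in your bijection should be written $P(k)\backslash P(k)\gamma_{\mathfrak c}^{-1}\Gamma_j$ rather than with ``$/P(k)$''.
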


\begin{proof}
We first verify the asserted bijection.  If
\(\gamma'=\eta\gamma p\), with \(\eta\in\Gamma_j\) and \(p\in P(k)\), then
\(\eta^{-1}g_j=g_jk\) for some \(k\in K_f\), and hence
\[
 P(k)\gamma'^{-1}g_jK_f
 =P(k)p^{-1}\gamma^{-1}\eta^{-1}g_jK_f
 =P(k)\gamma^{-1}g_jK_f.
\]
Thus the map is well defined.  Given \(x\in G_f\), the finite component
decomposition supplies \(j\), \(\delta\in\mathbf G(k)\), and \(k\in K_f\)
with \(x=\delta g_jk\); the class
\(\Gamma_j\delta^{-1}P(k)\) maps to \(P(k)xK_f\), proving surjectivity.
Finally, suppose the images of \(\Gamma_j\gamma P(k)\) and
\(\Gamma_\ell\gamma'P(k)\) agree.  Then
\[
 \gamma^{-1}g_j=p\gamma'^{-1}g_\ell k
 \qquad(p\in P(k),\ k\in K_f).
\]
The defining double-coset representatives first give \(j=\ell\).  With
\(\eta=\gamma p\gamma'^{-1}\), the displayed equality becomes
\(g_j=\eta g_jk\), so \(\eta\in\Gamma_j\), and
\(\gamma=\eta\gamma'p^{-1}\).  The two source classes are therefore equal,
which proves injectivity.

We next prove the component formula.  Fix
\(x_{\mathfrak c}=\gamma_{\mathfrak c}^{-1}g_j\).  A summand in
\(E_{\mathfrak c}^{\mathrm{ad}}(g_\infty,g_\ell;s)\) is nonzero precisely
when
\[
 P(k)\delta g_\ell K_f
 =P(k)\gamma_{\mathfrak c}^{-1}g_jK_f.
\]
This is impossible for \(\ell\ne j\), by the choice of the distinct
\(\mathbf G(k)\backslash G_f/K_f\)-representatives.  For \(\ell=j\), the
surviving cosets are
\[
 P(k)\gamma_{\mathfrak c}^{-1}\eta,
 \qquad
 \eta\in
 (\Gamma_j\cap\gamma_{\mathfrak c}P(k)
       \gamma_{\mathfrak c}^{-1})\backslash\Gamma_j
 =\Gamma_{j,c}\backslash\Gamma_j.
\]
Since \(\eta g_j=g_jk_\eta\) for some \(k_\eta\in K_f\), equations
\eqref{eq:adelic-cusp-coordinate-section} and
\eqref{eq:component-global-height-shift} give
\[
 \phi_{\mathfrak c,s}
 \bigl(\gamma_{\mathfrak c}^{-1}\eta
       (g_\infty,g_j)\bigr)
 =q^{s h_c(\eta g_\infty\mathcal K)}
 =f_{j,c,s}(\eta g_\infty).
\]
Summing over \(\Gamma_{j,c}\backslash\Gamma_j\) proves
\eqref{eq:adelic-column-component-restriction}.  This is a pointwise
identity, so no finite Haar factor occurs.  Lemma
\ref{lem:raw-eisenstein-normalization} gives the identity incoming matrix,
and the measure normalization in Setup~\ref{setup:global-automorphic} makes
the Hilbert-space direct sum orthogonal.  Formula
\eqref{eq:infinity-hecke-operator} proves the Hecke--adjacency statement.
Finally, every \(E_{j,c}^0\) has the meromorphic continuation recalled before
Lemma~\ref{lem:raw-eisenstein-normalization}.  Formula
\eqref{eq:adelic-column-component-restriction} therefore defines the
meromorphic continuation of \(E_{\mathfrak c}^{\mathrm{ad}}\) on every
component, and the proved equality persists after continuation.
\end{proof}

\begin{remark}[Cusp and character coordinates]
Here a \emph{factorizable section} means a restricted tensor product of local
inducing sections.  The basis \(\{\phi_{\mathfrak c,s}\}\) is localized at
cusps and is generally not itself factorizable.  If \(\mathscr C(K_f)\) is
identified with a finite abelian
class group, as in the four-cusp elliptic example, the basis obtained from
unramified class-group characters is its finite Fourier transform.  Thus one
factorizable character section generally produces a linear combination of
cusp columns, whereas the complete character family and the complete cusp
family span the same fixed-level inducing space.  This is why
Proposition~\ref{prop:adelic-component-compatibility} concerns the full
family, rather than an individual factorizable column.
\end{remark}

\subsection{Automorphic normalization and the unitary Eisenstein transform}
\label{subsec:automorphic-unitary-transform}

We now translate \eqref{eq:S-from-F} and \eqref{eq:standard-plancherel} back
to the stabilizer-weighted quotient, denoted by \(Y\).  In this subsection
\(P_{\mathrm{ac}}\) refers to the absolutely continuous projection of \(A\)
on \(L^2(Y,\lambda)\); it is unitarily conjugate to the projection for \(J\)
in \eqref{eq:standard-plancherel}.  On the \(c\)-th cusp assume
\[
        \lambda(c,n)=w_c(q+1)q^{-n},
        \qquad
        W=\diag(w_1,\ldots,w_r).
\]
Set \(\zeta=q^{s-1/2}\), so the adjacency eigenvalue is
\(q^s+q^{1-s}=x(\zeta)\).  For an incoming coefficient vector
\(b\in\CC^r\), use Efrat's holomorphically normalized cusp modes
\begin{align}
        \psi_s^{\mathrm{in}}(n)
        &=q^{ns}(q^{s-1}-q^{1-s}),\\
        \psi_s^{\mathrm{out}}(n)
        &=q^{n(1-s)}(q^s-q^{-s}),
\end{align}
and denote by \(E_s^{\mathrm{hol}}(b)\) the generalized eigenfunction with cusp expansion
\begin{equation}\label{eq:automorphic-asymptotic}
        E_s^{\mathrm{hol}}(b)(c,n)
        =b_c\psi_s^{\mathrm{in}}(n)
        +(\Phi(s)b)_c\psi_s^{\mathrm{out}}(n).
\end{equation}
Away from the discrete singular set of \(F\), existence and uniqueness follow
from Theorem~\ref{thm:exact-scattering} after applying \(U^{-1}\); elsewhere
this notation refers to the meromorphic continuation.
Thus \(\Phi(s)\) is the coefficient matrix in the
\(\psi^{\mathrm{in/out}}\)-basis.  It is a holomorphically rescaled version of
the classical constant-term matrix in
\eqref{eq:classical-component-constant-term}.
Define
\begin{equation}\label{eq:cin-cout}
        c_{\mathrm{in}}(\zeta)=\frac{\zeta}{a}-a\zeta^{-1},
        \qquad
        c_{\mathrm{out}}(\zeta)=a\zeta-\frac{\zeta^{-1}}a.
\end{equation}

\begin{theorem}[Normalization dictionary]\label{thm:normalization-dictionary}
Under \(Uf=\lambda^{1/2}f\), the automorphic incoming and outgoing amplitudes in \eqref{eq:automorphic-asymptotic} become
\[
        K_{\mathrm{in}}(\zeta)b,
        \qquad
        K_{\mathrm{out}}(\zeta)\Phi(s)b,
\]
where
\[
        K_{\mathrm{in}}(\zeta)
        =\sqrt{q+1}\,c_{\mathrm{in}}(\zeta)W^{1/2},
        \qquad
        K_{\mathrm{out}}(\zeta)
        =\sqrt{q+1}\,c_{\mathrm{out}}(\zeta)W^{1/2}.
\]
Consequently
\begin{equation}\label{eq:Phi-S-dictionary}
        \Phi(s)
        =\frac{c_{\mathrm{in}}(\zeta)}{c_{\mathrm{out}}(\zeta)}
          W^{-1/2}S(\zeta)W^{1/2}.
\end{equation}
Away from the zeros of \(c_{\mathrm{in}}\), define the
constant-term-normalized Eisenstein family
\begin{equation}\label{eq:constant-term-Eisenstein-family}
        \mathcal E_s=E_s^{\mathrm{hol}}\,c_{\mathrm{in}}(\zeta)^{-1}.
\end{equation}
Its cusp expansion is
\begin{equation}\label{eq:normalized-constant-term}
        \mathcal E_s(b)(c,n)
        =q^{ns}b_c
         +q^{n(1-s)}\bigl(\mathcal S_{\mathrm{aut}}(s)b\bigr)_c,
\end{equation}
where the classical scattering matrix defined by Eisenstein constant terms is
\begin{equation}\label{eq:automorphic-scattering-dictionary}
        \mathcal S_{\mathrm{aut}}(s)
        =\frac{c_{\mathrm{out}}(\zeta)}{c_{\mathrm{in}}(\zeta)}\Phi(s)
        =W^{-1/2}S(\zeta)W^{1/2}.
\end{equation}
Equations \eqref{eq:constant-term-Eisenstein-family}--
\eqref{eq:automorphic-scattering-dictionary} continue meromorphically across
the singular parameters.  At every point of the open unitary axis
\(s=1/2+i\theta/\log q\), \(0<\theta<\pi\), at which the continued matrices
are holomorphic, both \(\Phi(s)\) and
\(\mathcal S_{\mathrm{aut}}(s)\) are unitary for the cusp-width inner product
\[
        \langle b_1,b_2\rangle_W
        =\sum_{c=1}^r w_c(b_1)_c\overline{(b_2)_c}
\]
and satisfy
\[
        \Phi(1-s)\Phi(s)=I_r,
        \qquad
        \mathcal S_{\mathrm{aut}}(1-s)\mathcal S_{\mathrm{aut}}(s)=I_r.
\]
At a removable singularity, the same assertions hold after taking the
holomorphic continuation.
\end{theorem}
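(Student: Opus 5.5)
The proof is a direct computation on the homogeneous cusp rays, followed by transport of the results of Theorem~\ref{thm:exact-scattering} and Corollary~\ref{cor:S-functional-unitary} through the conjugation \(U\).

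\textbf{Step 1: transform the cusp modes.} On the \(c\)-th ray we have \(\lambda(c,n)^{1/2}=\sqrt{q+1}\,w_c^{1/2}q^{-n/2}\). With \(\zeta=q^{s-1/2}\) this gives
\[
 \lambda(c,n)^{1/2}q^{ns}=\sqrt{q+1}\,w_c^{1/2}\zeta^n,
 \qquad
 \lambda(c,n)^{1/2}q^{n(1-s)}=\sqrt{q+1}\,w_c^{1/2}\zeta^{-n}.
\]
Next we rewrite the prefactors in terms of \(\zeta\). Since \(q^{s}=a\zeta\), we get \(q^{s-1}-q^{1-s}=\zeta/a-a\zeta^{-1}=c_{\mathrm{in}}(\zeta)\) and \(q^s-q^{-s}=a\zeta-\zeta^{-1}/a=c_{\mathrm{out}}(\zeta)\). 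Hence \(U E_s^{\mathrm{hol}}(b)\) has tail values
\[
 \zeta^n K_{\mathrm{in}}(\zeta)b+\zeta^{-n}K_{\mathrm{out}}(\zeta)\Phi(s)b,
\]
which is the first assertion.

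\textbf{Step 2: identify with \(\Psi(\zeta)\) and read off \(\Phi\).} The function \(UE_s^{\mathrm{hol}}(b)\) is a generalized eigenfunction of \(J\) at \(x(\zeta)\) with incoming amplitude \(\alpha=K_{\mathrm{in}}b\). Away from the singular set of \(F\), and away from \(\zeta=\pm1\) and zeros of \(c_{\mathrm{in}}\), uniqueness in Theorem~\ref{thm:exact-scattering} gives
\[
 UE_s^{\mathrm{hol}}(b)=\Psi(\zeta)K_{\mathrm{in}}b.
\]
Therefore \(K_{\mathrm{out}}\Phi b=S(\zeta)K_{\mathrm{in}}b\). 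Since the scalars \(c_{\mathrm{in}},c_{\mathrm{out}}\) commute with \(W^{1/2}\), this is \eqref{eq:Phi-S-dictionary}.

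\textbf{Step 3: the normalized family.} Dividing by \(c_{\mathrm{in}}\) turns the incoming coefficient \(b_c\psi^{\mathrm{in}}_s(n)\) into \(q^{ns}b_c\). The outgoing term becomes \(q^{n(1-s)}(c_{\mathrm{out}}/c_{\mathrm{in}})\Phi b\), which gives \eqref{eq:normalized-constant-term} and \eqref{eq:automorphic-scattering-dictionary}. All matrices involved are rational in \(\zeta\), so the identities hold as meromorphic identities and extend across singular parameters.

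\textbf{Step 4: unitarity and the functional equations.} Since \(S\) is unitary on \(|\zeta|=1\) (Corollary~\ref{cor:S-functional-unitary}), \(W^{-1/2}SW^{1/2}\) is unitary for \(\langle\cdot,\cdot\rangle_W\); indeed \(\|W^{1/2}Tb\|=\|W^{1/2}b\|\) for \(T=W^{-1/2}SW^{1/2}\). For \(\Phi\), note that on \(\zeta=e^{i\theta}\),
\[
 \overline{c_{\mathrm{out}}(\zeta)}=a\zeta^{-1}-\zeta/a=-c_{\mathrm{in}}(\zeta).
\]
So \(c_{\mathrm{in}}/c_{\mathrm{out}}\) has modulus one, and \(\Phi\) is a unimodular multiple of a \(W\)-unitary matrix, hence \(W\)-unitary itself. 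The substitution \(s\mapsto1-s\) corresponds to \(\zeta\mapsto\zeta^{-1}\), so \(\mathcal S_{\mathrm{aut}}(1-s)\mathcal S_{\mathrm{aut}}(s)=W^{-1/2}S(\zeta^{-1})S(\zeta)W^{1/2}=I_r\) by \eqref{eq:S-functional}. For \(\Phi\) we need \(c_{\mathrm{in}}(\zeta^{-1})c_{\mathrm{in}}(\zeta)=c_{\mathrm{out}}(\zeta^{-1})c_{\mathrm{out}}(\zeta)\). This holds because \(c_{\mathrm{in}}(\zeta^{-1})=-c_{\mathrm{out}}(\zeta)\) and \(c_{\mathrm{out}}(\zeta^{-1})=-c_{\mathrm{in}}(\zeta)\), so both products equal \(-c_{\mathrm{in}}(\zeta)c_{\mathrm{out}}(\zeta)\).

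\textbf{Main obstacle.} The only delicate point is the bookkeeping at exceptional parameters. These are the zeros of \(c_{\mathrm{in}}\) (\(\zeta^2=q\), off the unit circle), the threshold points \(\zeta=\pm1\), and \(\Theta_F\). At these points uniqueness is unavailable, and I would argue by meromorphic continuation of rational identities. Removable singularities on the unitary axis inherit unitarity by continuity, since the set of unitary matrices is closed.
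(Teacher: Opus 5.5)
Your proposal is correct and follows the paper's proof essentially step for step: you transform the two Efrat modes under $U$ to obtain $K_{\mathrm{in}}$ and $K_{\mathrm{out}}$, read off $K_{\mathrm{out}}\Phi=SK_{\mathrm{in}}$ from the uniqueness in Theorem~\ref{thm:exact-scattering}, and divide by $c_{\mathrm{in}}$. You then deduce $W$-unitarity and both functional equations from Corollary~\ref{cor:S-functional-unitary}, using $|c_{\mathrm{in}}|=|c_{\mathrm{out}}|$ on the circle together with $c_{\mathrm{in}}(\zeta^{-1})=-c_{\mathrm{out}}(\zeta)$ and $c_{\mathrm{out}}(\zeta^{-1})=-c_{\mathrm{in}}(\zeta)$. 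One small addition to your bookkeeping: solving for $\Phi$ also divides by $c_{\mathrm{out}}$. Its zeros $\zeta^2=q^{-1}$ likewise lie off the unit circle and are covered by the same meromorphic continuation.
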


\begin{proof}
Since \(q^{ns}=q^{n/2}\zeta^n\) and \(q^{n(1-s)}=q^{n/2}\zeta^{-n}\),
\[
\begin{aligned}
 U\psi_s^{\mathrm{in}}(c,n)
 &=\sqrt{(q+1)w_c}\,c_{\mathrm{in}}(\zeta)\zeta^n,\\
 U\psi_s^{\mathrm{out}}(c,n)
 &=\sqrt{(q+1)w_c}\,c_{\mathrm{out}}(\zeta)\zeta^{-n}.
\end{aligned}
\]
The standard outgoing-amplitude relation is therefore
\[
        K_{\mathrm{out}}(\zeta)\Phi(s)
        =S(\zeta)K_{\mathrm{in}}(\zeta),
\]
which is \eqref{eq:Phi-S-dictionary}.  On \(|\zeta|=1\),
\[
        |c_{\mathrm{in}}(\zeta)|
        =|c_{\mathrm{out}}(\zeta)|.
\]
The constant-term expansion and
\eqref{eq:automorphic-scattering-dictionary} follow by dividing the incoming
coefficient by \(c_{\mathrm{in}}\).  Thus \eqref{eq:S-unitary} is equivalent
to \(W\)-unitarity of \(\mathcal S_{\mathrm{aut}}\); since
\(|c_{\mathrm{in}}|=|c_{\mathrm{out}}|\), it is also equivalent to
\(W\)-unitarity of \(\Phi\).  Finally,
\[
        c_{\mathrm{in}}(\zeta^{-1})=-c_{\mathrm{out}}(\zeta),
        \qquad
        c_{\mathrm{out}}(\zeta^{-1})=-c_{\mathrm{in}}(\zeta),
\]
and \eqref{eq:S-functional} gives both automorphic functional equations.
\end{proof}

\begin{theorem}[Automorphic matrix Maass--Selberg relation]
\label{thm:automorphic-maass-selberg}
Let \(\chi_N=U^{-1}P_NU\), equivalently the coordinate projection retaining
the finite core and the first \(N\) vertices of every homogeneous cusp, where
\(N\ge1\).  Put
\[
 s_\theta=\frac12+\frac{i\theta}{\log q},
 \qquad
 \Sigma_\theta=\mathcal S_{\mathrm{aut}}(s_\theta),
 \qquad 0<\theta<\pi,
\]
away from \(\Theta_F\).  Then the constant-term-normalized spherical
Eisenstein family satisfies
\begin{equation}\label{eq:automorphic-maass-selberg}
\begin{aligned}
 \mathcal E_{s_\theta}^*\chi_N\mathcal E_{s_\theta}
 =(q+1)\Bigg[&(2N+1)W+i\Sigma_\theta^*W\dot\Sigma_\theta\\
 &+\frac{i}{2\sin\theta}
 \left(e^{-i(2N+1)\theta}W\Sigma_\theta
       -e^{i(2N+1)\theta}\Sigma_\theta^*W\right)\Bigg],
\end{aligned}
\end{equation}
where \(\dot\Sigma_\theta=\partial_\theta\Sigma_\theta\).
The left-hand side is the \(r\times r\) Gram matrix with entries
\[
 \bigl(\mathcal E_{s_\theta}^*\chi_N
 \mathcal E_{s_\theta}\bigr)_{cd}
 =\bigl\langle\chi_N\mathcal E_{s_\theta}(e_d),
   \mathcal E_{s_\theta}(e_c)\bigr\rangle;
\]
every such pairing is a finite sum.
In particular,
\(i\Sigma_\theta^*W\dot\Sigma_\theta\) is Hermitian.  After conjugation by
\(W^{-1/2}\), this middle term is exactly the channel matrix
\(\mathcal M(\theta)=iS_\theta^*\dot S_\theta\) from
\eqref{eq:channel-mass-matrix}.
\end{theorem}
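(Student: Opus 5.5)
The plan is to transport the standard-$\ell^2$ identity \eqref{eq:matrix-maass-selberg} of Theorem~\ref{thm:matrix-maass-selberg} through $U$ and the normalization dictionary of Theorem~\ref{thm:normalization-dictionary}. No new Green-identity computation is needed.

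The first step identifies $U\mathcal E_{s_\theta}$ with $\Psi_\theta$ times an explicit matrix. By Theorem~\ref{thm:normalization-dictionary}, $U E^{\mathrm{hol}}_s(b)$ has Jacobi incoming amplitude $K_{\mathrm{in}}(\zeta)b=\sqrt{q+1}\,c_{\mathrm{in}}(\zeta)W^{1/2}b$. Uniqueness in Theorem~\ref{thm:exact-scattering} then gives $UE^{\mathrm{hol}}_s=\Psi(\zeta)K_{\mathrm{in}}(\zeta)$. Dividing by $c_{\mathrm{in}}$ as in \eqref{eq:constant-term-Eisenstein-family} yields
\[
 U\mathcal E_{s_\theta}=\sqrt{q+1}\,\Psi_\theta W^{1/2},\qquad \zeta=e^{i\theta}.
\]
This can also be checked directly on the cusp rays, since $Uq^{ns}=\sqrt{(q+1)w_c}\,\zeta^n$ and $Uq^{n(1-s)}=\sqrt{(q+1)w_c}\,\zeta^{-n}$. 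Hence the incoming coefficient is $\sqrt{q+1}W^{1/2}b$ and the outgoing coefficient is $\sqrt{q+1}W^{1/2}\Sigma_\theta b=\sqrt{q+1}\,S_\theta W^{1/2}b$, using \eqref{eq:automorphic-scattering-dictionary}.

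The second step uses that $U$ is unitary from $L^2(Y,\lambda)$ to $\mathscr H$ and that $\chi_N=U^{-1}P_NU$. Together these give
\[
 \mathcal E_{s_\theta}^*\chi_N\mathcal E_{s_\theta}=(q+1)W^{1/2}\Psi_\theta^*P_N\Psi_\theta W^{1/2}.
\]
All pairings are finite sums because $\chi_N$ has finite support. Substituting \eqref{eq:matrix-maass-selberg}, we rewrite each term using $S_\theta=W^{1/2}\Sigma_\theta W^{-1/2}$, with $W$ a constant positive diagonal matrix. The term $(2N+1)I_r$ becomes $(2N+1)W$. The term $W^{1/2}S_\theta W^{1/2}$ equals $W\Sigma_\theta$. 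The term $W^{1/2}S_\theta^*W^{1/2}$ equals $\Sigma_\theta^*W$. Finally, since $\dot S_\theta=W^{1/2}\dot\Sigma_\theta W^{-1/2}$, we get $W^{1/2}S_\theta^*\dot S_\theta W^{1/2}=\Sigma_\theta^*W\dot\Sigma_\theta$. This yields \eqref{eq:automorphic-maass-selberg}.

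The last step handles the Hermitian claim. We have $i\Sigma_\theta^*W\dot\Sigma_\theta=W^{1/2}\mathcal M(\theta)W^{1/2}$, which is a congruence of the Hermitian matrix $\mathcal M(\theta)$ from Corollary~\ref{cor:maass-selberg-traceless}, so it is Hermitian. Conjugating by $W^{-1/2}$ returns exactly $\mathcal M(\theta)$.

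The main point requiring care is the first step: fixing the exact constant $\sqrt{q+1}\,W^{1/2}$, checking that $c_{\mathrm{in}}$ cancels exactly with no residual phase, and confirming that the Gram-matrix convention (entries $\langle\chi_N\mathcal E(e_d),\mathcal E(e_c)\rangle$, linear in the first slot) matches $\Psi_\theta^*P_N\Psi_\theta$. A mismatch there would transpose or conjugate the formula. The restriction to $\theta\notin\Theta_F$ ensures $\Psi_\theta$ is defined by Theorem~\ref{thm:exact-scattering}. The parameters at which $c_{\mathrm{in}}(e^{i\theta})=0$ need no separate treatment, because $|c_{\mathrm{in}}|=|c_{\mathrm{out}}|\neq0$ on the open arc whenever $a\neq a^{-1}$.
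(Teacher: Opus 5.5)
Your proposal is correct and follows the paper's proof essentially verbatim: the paper also obtains \(U\mathcal E_{s_\theta}=\sqrt{q+1}\,\Psi_\theta W^{1/2}\) from the constant-term expansion and the definition of \(U\). It then conjugates \eqref{eq:matrix-maass-selberg} by \(\sqrt{q+1}\,W^{1/2}\) and substitutes \(S_\theta=W^{1/2}\Sigma_\theta W^{-1/2}\). For the Hermitian claim, the paper either reads it off the displayed identity or differentiates \(\Sigma_\theta^*W\Sigma_\theta=W\); your congruence \(i\Sigma_\theta^*W\dot\Sigma_\theta=W^{1/2}\mathcal M(\theta)W^{1/2}\) is an equally valid one-line alternative.
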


\begin{proof}
Equation \eqref{eq:normalized-constant-term} and the definition of \(U\)
give the exact relation
\[
 U\mathcal E_{s_\theta}
 =\Psi_\theta\sqrt{q+1}\,W^{1/2}.
\]
Conjugating \eqref{eq:matrix-maass-selberg} by
\(\sqrt{q+1}\,W^{1/2}\), and using
\[
 S_\theta=W^{1/2}\Sigma_\theta W^{-1/2},
\]
gives \eqref{eq:automorphic-maass-selberg}.  The Hermitian assertion follows
either from the displayed identity or by differentiating
\(\Sigma_\theta^*W\Sigma_\theta=W\).
\end{proof}

\begin{corollary}[Automorphic recovery beyond the scattering determinant]
\label{cor:automorphic-matrix-consequence}
Let \(s_\theta=1/2+i\theta/\log q\) be a point of the open unitary axis at
which \(\mathcal E_s\) and the continued scattering matrices are
holomorphic, and set
\begin{equation}\label{eq:automorphic-renormalized-gram}
 \mathfrak G_N(\theta)
 =\frac1{q+1}W^{-1/2}
   \mathcal E_{s_\theta}^*\chi_N\mathcal E_{s_\theta}W^{-1/2}
   -(2N+1)I_r.
\end{equation}
Then
\begin{equation}\label{eq:automorphic-cesaro-channel-matrix}
 \lim_{L\to\infty}\frac1L\sum_{N=1}^L\mathfrak G_N(\theta)
 =iS(e^{i\theta})^*\partial_\theta S(e^{i\theta})
 =:\mathcal M(\theta)
\end{equation}
in \(M_r(\mathbb C)\).  Moreover,
\begin{equation}\label{eq:automorphic-determinant-trace}
\operatorname{tr}\mathcal M(\theta)
=i\,\partial_\theta\log\det S(e^{i\theta})
=i\,\partial_\theta\log\det\mathcal S_{\mathrm{aut}}(s_\theta),
\end{equation}
where each logarithm is chosen locally.  The traceless part of
\(\mathcal M(\theta)\) is recovered from the
matrix-valued truncated automorphic Gram data and is not, in general,
determined by either determinant.
\end{corollary}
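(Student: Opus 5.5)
The plan is to reduce everything to the standard-\(\ell^2\) statements already proved: Theorem~\ref{thm:matrix-maass-selberg} and Corollary~\ref{cor:maass-selberg-traceless}.

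\textbf{Step 1: identify \(\mathfrak G_N\).} The proof of Theorem~\ref{thm:automorphic-maass-selberg} gives the exact relation \(U\mathcal E_{s_\theta}=\Psi_\theta\sqrt{q+1}\,W^{1/2}\). Since \(U\) is unitary and \(\chi_N=U^{-1}P_NU\),
\[
 \mathcal E_{s_\theta}^*\chi_N\mathcal E_{s_\theta}
 =(q+1)W^{1/2}\Psi_\theta^*P_N\Psi_\theta W^{1/2},
\]
using that \(W\) is real diagonal. Substituting this into \eqref{eq:automorphic-renormalized-gram} yields
\[
 \mathfrak G_N(\theta)=\Psi_\theta^*P_N\Psi_\theta-(2N+1)I_r .
\]
The Ces\`aro limit \eqref{eq:automorphic-cesaro-channel-matrix} is then exactly \eqref{eq:renormalized-gram-limit}.

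To prove that limit directly, insert \eqref{eq:matrix-maass-selberg}. Then \(\mathfrak G_N(\theta)\) equals \(\mathcal M(\theta)\) plus a boundary term. Since \(e^{\pm2i\theta}\ne1\) on \((0,\pi)\), the geometric sums \(\sum_{N\le L}e^{\mp i(2N+1)\theta}\) are bounded, so the averaged boundary term is \(O(1/L)\).

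\textbf{Step 2: the trace identity.} For the first equality in \eqref{eq:automorphic-determinant-trace}, apply Jacobi's formula together with \(S_\theta^{-1}=S_\theta^*\) from \eqref{eq:green-flux-unitarity}. For the second, use Theorem~\ref{thm:normalization-dictionary}: \(\mathcal S_{\mathrm{aut}}=W^{-1/2}SW^{1/2}\) is a similarity by a constant matrix, so the two determinants coincide. Hence their local logarithms agree up to an additive constant, which derivatives ignore.

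\textbf{Step 3: the traceless part, the main obstacle.} The claim that the traceless part is not determined by either determinant is the only non-formal point. By Step 1, \(\mathcal M_0\) is recovered from the Gram data. To show it is not a function of \(\det S\) in general, I would invoke the explicit two-cusp \(\Gamma_0(T)\) computation of Corollary~\ref{cor:gamma0-channel-mass}, which the paper cites for a nonzero traceless part. Two further points remain:
\begin{itemize}
\item The phrase ``in general'' is a non-uniqueness claim. One would exhibit, or refer to, a situation in which \(\det S\) is unchanged but \(\mathcal M_0\) differs. The simplest instance is an \(r=2\) example with \(\mathcal M_0\ne0\) compared against a decoupled, channel-diagonal configuration with the same determinant.
\item The determinant is conjugation-invariant, while \(\mathcal M_0\) transforms under the constant unitary \(S\mapsto S\,\mathcal U\). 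Indeed, \(\mathcal U^*\mathcal M\,\mathcal U\) changes the off-diagonal entries, and this already shows that \(\det S\) cannot encode them.
\end{itemize}
I expect this last point to be stated by reference rather than reproved.
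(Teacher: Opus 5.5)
Your proposal is correct and is essentially the paper's proof. Reducing \(\mathfrak G_N(\theta)\) to \(\Psi_\theta^*P_N\Psi_\theta-(2N+1)I_r\) via \(U\mathcal E_{s_\theta}=\Psi_\theta\sqrt{q+1}\,W^{1/2}\) amounts to the paper's conjugation of \eqref{eq:automorphic-maass-selberg}, since that identity was derived from this same relation; the Ces\`aro averaging of the \(e^{\pm i(2N+1)\theta}\) terms, Jacobi's formula with unitarity, the constant similarity by \(W^{1/2}\), and the appeal to Corollary~\ref{cor:maass-selberg-traceless} and the \(\Gamma_0(T)\) computation also match. One fix in your optional second bullet: use conjugation, not right multiplication, because \(S\mapsto S\mathcal U\) generally breaks the functional equation \eqref{eq:S-functional}, whereas \(V\mapsto V\mathcal U\) leaves \(VV^*\), hence \(F\) and \(\det S\), unchanged and gives \(S\mapsto\mathcal U^*S\mathcal U\), \(\mathcal M\mapsto\mathcal U^*\mathcal M\mathcal U\) (for \(\Gamma_0(T)\) with \(\mathcal Ue_1=e_+\), \(\mathcal Ue_2=e_-\) this is exactly your decoupled channel-diagonal model, with the same determinant but a different traceless part).
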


\begin{proof}
Conjugate \eqref{eq:automorphic-maass-selberg} by
\((q+1)^{-1}W^{-1/2}\) on the left and \(W^{-1/2}\) on the right, and use
\[
 S(e^{i\theta})=W^{1/2}\mathcal S_{\mathrm{aut}}(s_\theta)W^{-1/2}.
\]
The two remaining boundary terms are constant matrices times
\(e^{\pm i(2N+1)\theta}\); their Ces\`aro means vanish because
\(0<\theta<\pi\).  This proves
\eqref{eq:automorphic-cesaro-channel-matrix}.  Jacobi's determinant formula
and unitarity give the first equality in
\eqref{eq:automorphic-determinant-trace}; the second follows because the two
scattering matrices are conjugate by the constant matrix \(W^{1/2}\).
The last assertion is Corollary~\ref{cor:maass-selberg-traceless} in
automorphic normalization.
\end{proof}

\begin{remark}[Covariance under a change of cusp origins]\label{rem:height-shift}
The displayed constants use the convention that the first free channel vertex is \(n=1\).  To cut the \(c\)-th cusp \(k_c\ge0\) levels deeper, put \(K_0=\diag(k_1,\ldots,k_r)\) and use the new height \(n'=n-k_c\).  Then
\[
        W'=Wq^{-K_0},
        \qquad
        \mathcal E'_s=\mathcal E_s q^{-K_0s},
\]
where \(q^{-K_0}=\diag(q^{-k_1},\ldots,q^{-k_r})\) and
\(q^{-K_0s}=\diag(q^{-k_1s},\ldots,q^{-k_rs})\).  The classical constant-term
matrix transforms by the explicit rule
\begin{equation}\label{eq:height-shift-automorphic-scattering}
        \mathcal S'_{\mathrm{aut}}(s)
        =q^{K_0(1-s)}\mathcal S_{\mathrm{aut}}(s)q^{-K_0s}.
\end{equation}
On \(\Re s=1/2\), these transformations leave the operator-valued Plancherel
integrand in \eqref{eq:constant-term-plancherel} unchanged.  Thus the height
origin is not a hidden normalization: moving it changes the coefficient matrix
and cusp-width matrix by the displayed compensating factors.
\end{remark}

\begin{proposition}[Identification with classical spherical Eisenstein series]\label{prop:classical-Eisenstein-identification}
Assume Setup~\ref{setup:global-automorphic}, and write
\(E_s^{\mathrm{cl}}=E_s^0=(E_1^0,\ldots,E_r^0)\) for the raw Poincar\'e
family in \eqref{eq:classical-family-defined}, whose incoming
constant-term matrix is \(I_r\) by Lemma
\ref{lem:raw-eisenstein-normalization}.  After passing to
\(\Gamma\backslash G_\infty/\mathcal K\cong V(Y)\), this column family is
precisely \(\mathcal E_s\) in
\eqref{eq:constant-term-Eisenstein-family}.  In particular,
\begin{equation}\label{eq:classical-M-equals-scattering}
        M(s)=\mathcal S_{\mathrm{aut}}(s)
        =W^{-1/2}S(q^{s-1/2})W^{1/2},
\end{equation}
where \(M(s)=(M_{dc}(s))_{d,c}\) is the classical constant-term matrix in
\eqref{eq:classical-component-constant-term}.
\end{proposition}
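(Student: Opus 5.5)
The plan is a uniqueness argument for generalized eigenfunctions with prescribed incoming data. By Lemma~\ref{lem:raw-eisenstein-normalization}, for \(\Re s>1\) each column \(E_c^0\) is, after passing to \(V(Y)\), an eigenfunction of \(A\) with eigenvalue \(q^s+q^{1-s}=x(\zeta)\), where \(\zeta=q^{s-1/2}\). Its expansion on every retained cusp ray is \(\delta_{dc}q^{ns}+M_{dc}(s)q^{n(1-s)}\). Right-\(\mathcal K\)-invariance and left-\(\Gamma\)-invariance make it a function on \(V(Y)\), and Lemma~\ref{lem:ray-equals-constant-term} shows that its ray values coincide with the constant term. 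So no horospherical factor enters, and the quotient function has exactly this cusp expansion.

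Next I would apply \(U=\lambda^{1/2}\). As in the proof of Theorem~\ref{thm:normalization-dictionary}, \(q^{ns}\) and \(q^{n(1-s)}\) become \(\sqrt{(q+1)w_c}\,\zeta^{n}\) and \(\sqrt{(q+1)w_d}\,\zeta^{-n}\) on the respective rays. Hence \(UE_s^0\) is a generalized eigenfunction of \(J\) at \(x(\zeta)\), with incoming amplitude matrix \(\sqrt{q+1}\,W^{1/2}\) and outgoing matrix \(\sqrt{q+1}\,W^{1/2}M(s)\). The same computation gives \(U\mathcal E_s=\Psi(\zeta)\sqrt{q+1}\,W^{1/2}\), with outgoing matrix \(\sqrt{q+1}\,W^{1/2}\mathcal S_{\mathrm{aut}}(s)\).

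The key step is uniqueness. For fixed incoming amplitude, Theorem~\ref{thm:exact-scattering} gives a unique generalized eigenfunction whenever \(F(\zeta)\) is invertible and \(\zeta\ne\pm1\). Its proof only uses the tail form \(\zeta^n\alpha+\zeta^{-n}\beta\), and any eigenfunction on a homogeneous ray with \(\zeta^2\ne1\) has this form. Therefore \(UE_s^0=U\mathcal E_s\) for those \(s\) with \(\Re s>1\) at which \(\det F(q^{s-1/2})\ne0\). Since \(\det F\) is a nonzero rational function, this excludes only a discrete set. Comparing outgoing coefficients then gives \(W^{1/2}M(s)=W^{1/2}\mathcal S_{\mathrm{aut}}(s)\), so \(M(s)=\mathcal S_{\mathrm{aut}}(s)=W^{-1/2}S(\zeta)W^{1/2}\) by \eqref{eq:automorphic-scattering-dictionary}.

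Finally, both sides are meromorphic in \(s\). The left side is meromorphic by the continuation recalled from \cite{LiEisenstein}, and the right side by Theorems~\ref{thm:exact-scattering} and~\ref{thm:normalization-dictionary}. They agree on an open set, so the identity theorem extends the equality of families and of \(M(s)\) everywhere.

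I expect the main obstacle to be the bookkeeping that makes the cusp labels, height origins and widths used in Setup~\ref{setup:global-automorphic} coincide with those of Setup~\ref{setup:quotient-adjacency}. Concretely, one must check that the retained vertices \((c,n)\) with \(h_c=n\) are the same channel vertices, with \(\lambda(c,n)=w_c(q+1)q^{-n}\). I would settle this using \eqref{eq:one-vertex-per-horosphere} and Lemma~\ref{lem:arithmetic-cusp-height}: the \(q\) lower neighbours of a deep vertex and its unique upper neighbour stay inside the ray. This gives the tail recurrence \(qf(n-1)+f(n+1)\) required by Proposition~\ref{prop:unitary-normal-form}, so the two setups match.
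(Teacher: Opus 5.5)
Your proposal is correct and follows the paper's own proof essentially step for step. Both arguments take the cusp expansion from Lemma~\ref{lem:raw-eisenstein-normalization} and Lemma~\ref{lem:ray-equals-constant-term}, conjugate by \(U\) to obtain incoming amplitude \(\sqrt{q+1}\,W^{1/2}e_c\), invoke uniqueness from Theorem~\ref{thm:exact-scattering} wherever \(F(\zeta)\) is invertible to read off \(M(s)=W^{-1/2}S(\zeta)W^{1/2}\), and extend by meromorphic continuation. Your extra check that the horosphere cut of Setup~\ref{setup:global-automorphic} produces the homogeneous rays and widths of Setup~\ref{setup:quotient-adjacency} makes explicit bookkeeping that the paper leaves implicit.
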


\begin{proof}
For \(\Re s>1\), each column of
\(E_s^{\mathrm{cl}}\) is right \(\mathcal K\)-invariant.  Lemma
\ref{lem:raw-eisenstein-normalization} shows that it is an eigenfunction of
the unnormalized spherical Hecke operator \(\mathsf T_\infty\) in
\eqref{eq:infinity-hecke-operator}, with incoming coefficient one at
the selected cusp and zero at the others.  Under
\(G_\infty/\mathcal K\cong V(\mathcal T)\), that Hecke operator is the
unnormalized adjacency operator \eqref{eq:quotient-adjacency}, with eigenvalue
\(q^s+q^{1-s}\).  Lemma~\ref{lem:ray-equals-constant-term} identifies its
value on every retained ray with the normalized unipotent constant term, so
the \(c\)-th column has the exact expansion
\[
        q^{ns}e_c+q^{n(1-s)}\mathbf m_c(s)
\]
with \(\mathbf m_c(s)=(M_{dc}(s))_d\).  Applying \(U\) turns the prescribed
incoming term into
\(\sqrt{q+1}\,W^{1/2}\zeta^n e_c\).  Theorem
\ref{thm:exact-scattering} gives the unique generalized eigenfunction with
that incoming amplitude whenever \(F(\zeta)\) is invertible, so its outgoing
coefficient is \(W^{-1/2}S(\zeta)W^{1/2}e_c\).  This is exactly the \(c\)-th
column of \eqref{eq:normalized-constant-term}.  Hence
\(\mathbf m_c(s)=W^{-1/2}S(\zeta)W^{1/2}e_c\) in a nonempty open set.
Meromorphic continuation of the classical Eisenstein series and of the
finite-core solution proves the identity everywhere as a meromorphic
identity, including across removable points of the discrete singular set.
Proposition~\ref{prop:adelic-component-compatibility} shows that the same
identification agrees with the fixed-level adelic Eisenstein family rather
than merely with an abstract solution on one quotient graph.
\end{proof}

\begin{remark}[Full adelic space versus one component]
Proposition~\ref{prop:classical-Eisenstein-identification} is deliberately
componentwise, while Proposition~\ref{prop:adelic-component-compatibility}
records the precise fixed-level adelic interpretation.  Thus the spherical
Eisenstein transform on the full \(K_f\mathcal K\)-fixed adelic space is the
orthogonal direct sum of the component transforms.  If a global Tamagawa
measure is used, its restriction to the \(j\)-th component is a constant
multiple of the local measure used here; multiplying every cusp width on that
component by the same constant restores the global normalization.  The
formulas in this paper use \(\operatorname{vol}(\mathcal K)=1\), for which
\(\lambda(v)=|\Gamma_v|^{-1}\).
\end{remark}

\begin{theorem}[Eisenstein Plancherel formula]\label{thm:automorphic-plancherel}
Let
\[
        D(\theta)=(q-1)^2+4q\sin^2\theta.
\]
With \(E_s^{\mathrm{hol}}:\CC^r\to\mathbb C^{V(Y)}\) denoting the column
family in \eqref{eq:automorphic-asymptotic},
write
\[
        (E_s^{\mathrm{hol}})^*f
        =\bigl(\langle f,E_s^{\mathrm{hol}}(e_c)\rangle\bigr)_{c=1}^r
\]
for finitely supported \(f\), and use the analogous convention for \(\mathcal E_s^*f\).  Then
\begin{equation}\label{eq:automorphic-plancherel}
        P_{\mathrm{ac}}
        =\int_0^\pi E_s^{\mathrm{hol}}\,P_E(\theta)\,(E_s^{\mathrm{hol}})^*\,d\theta,
        \qquad
        P_E(\theta)
        =\frac{q}{2\pi(q+1)D(\theta)}W^{-1},
        \quad
        s=\frac12+\frac{i\theta}{\log q}.
\end{equation}
For the constant-term-normalized family \(\mathcal E_s\) in
\eqref{eq:constant-term-Eisenstein-family}, the same resolution is
\begin{equation}\label{eq:constant-term-plancherel}
        P_{\mathrm{ac}}
        =\int_0^\pi \mathcal E_s\,
          \frac{1}{2\pi(q+1)}W^{-1}\,
          \mathcal E_s^*\,d\theta.
\end{equation}
Both operator integrals are understood weakly on finitely supported vectors
and extend by density.
\end{theorem}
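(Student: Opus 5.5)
The plan is to obtain both formulas by transporting the standard resolution \eqref{eq:standard-plancherel} of Theorem~\ref{thm:resolvent-jump} back through \(U\), using the normalization dictionary of Theorem~\ref{thm:normalization-dictionary}. No new analysis is needed: only the exact scalar factors have to be tracked.

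\textbf{Step 1: express the automorphic families through \(\Psi_\theta\).} By Theorem~\ref{thm:normalization-dictionary} the incoming amplitude of \(UE_s^{\mathrm{hol}}(b)\) is \(K_{\mathrm{in}}(\zeta)b\). Uniqueness in Theorem~\ref{thm:exact-scattering} then gives
\[
 UE_s^{\mathrm{hol}}=\Psi_\theta K_{\mathrm{in}}(e^{i\theta})=\sqrt{q+1}\,c_{\mathrm{in}}(e^{i\theta})\,\Psi_\theta W^{1/2}
\]
for \(\theta\notin\Theta_F\), with \(\zeta=q^{s-1/2}=e^{i\theta}\). Dividing by \(c_{\mathrm{in}}\) gives \(U\mathcal E_s=\sqrt{q+1}\,\Psi_\theta W^{1/2}\), which is the relation already used in Theorem~\ref{thm:automorphic-maass-selberg}. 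Here \(c_{\mathrm{in}}\) does not vanish on the unit circle, by Step~3 below.

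\textbf{Step 2: adjoints and the projection.} Since \(U\) is unitary from \(L^2(Y,\lambda)\) to standard \(\ell^2\), for finitely supported \(f\) one has \(\langle f,E_s^{\mathrm{hol}}(e_c)\rangle_\lambda=\langle Uf,UE_s^{\mathrm{hol}}(e_c)\rangle\). Hence
\[
 (E_s^{\mathrm{hol}})^*f=K_{\mathrm{in}}^*\Psi_\theta^*Uf .
\]
Likewise \(P_{\mathrm{ac}}(A)=U^{-1}P_{\mathrm{ac}}(J)U\). Substituting, the right side of \eqref{eq:automorphic-plancherel} becomes
\[
 U^{-1}\int_0^\pi\Psi_\theta\,K_{\mathrm{in}}P_E(\theta)K_{\mathrm{in}}^*\,\Psi_\theta^*\,d\theta\;U .
\]
By \eqref{eq:standard-plancherel}, it therefore suffices to verify the matrix identity
\[
 K_{\mathrm{in}}P_EK_{\mathrm{in}}^*=\frac{1}{2\pi}I_r,
\]
that is, \((q+1)|c_{\mathrm{in}}|^2\,W^{1/2}P_EW^{1/2}=\frac{1}{2\pi}I_r\). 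Since \(W\) is diagonal and positive, this forces
\[
 P_E=\frac{1}{2\pi(q+1)|c_{\mathrm{in}}|^2}W^{-1}.
\]

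\textbf{Step 3: the scalar computation, which is the only real check.} For \(\zeta=e^{i\theta}\) and \(a=\sqrt q\),
\[
 |c_{\mathrm{in}}|^2=\Bigl|\frac{\zeta}{a}-a\zeta^{-1}\Bigr|^2=\frac1q+q-2\cos2\theta=\frac{(q-1)^2+4q\sin^2\theta}{q}=\frac{D(\theta)}{q}.
\]
This gives exactly the claimed \(P_E(\theta)=\frac{q}{2\pi(q+1)D(\theta)}W^{-1}\). It also shows \(D>0\), so \(c_{\mathrm{in}}\neq0\) on the closed arc and \(\mathcal E_s\) is well defined there. For \eqref{eq:constant-term-plancherel}, write \(\mathcal E_s=E_s^{\mathrm{hol}}c_{\mathrm{in}}^{-1}\). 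The scalar factor \(|c_{\mathrm{in}}|^{-2}\) cancels the \(D/q\), leaving the constant density \(\frac{1}{2\pi(q+1)}W^{-1}\). Equivalently, one can plug \(U\mathcal E_s=\sqrt{q+1}\,\Psi_\theta W^{1/2}\) directly into \eqref{eq:standard-plancherel}.

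\textbf{Step 4: justification of the weak and density statements.} These come verbatim from Theorems~\ref{thm:resolvent-jump} and~\ref{thm:unitary-scattering-transform}. The finite set \(\Theta_F\) is null, and all pairings with finitely supported vectors are finite sums, so the weak integrals make sense and extend by density. The main obstacle is bookkeeping rather than analysis. One must get the adjoint in the \(\lambda\)-weighted inner product right, so that \(U\) and not \(U^*\lambda\) appears. One must also confirm that \(K_{\mathrm{in}}\), and not \(K_{\mathrm{out}}\), is the factor in Step~1. Since \(|c_{\mathrm{in}}|=|c_{\mathrm{out}}|\) on the circle, either would give the same density, which provides a reassuring consistency check.
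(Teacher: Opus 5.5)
Your proposal is correct and follows essentially the same route as the paper. The paper also writes \(UE_s^{\mathrm{hol}}=\Psi_\theta K_{\mathrm{in}}(\zeta)\), substitutes this into \eqref{eq:standard-plancherel}, and uses \(|c_{\mathrm{in}}(e^{i\theta})|^2=D(\theta)/q\) to identify \(\frac1{2\pi}K_{\mathrm{in}}^{-1}(K_{\mathrm{in}}^{-1})^*\) with \(P_E(\theta)\), then obtains the constant-term version by dividing out \(c_{\mathrm{in}}\); your explicit handling of the \(\lambda\)-weighted adjoint through \(U\) is a more detailed write-up of that same substitution.
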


\begin{proof}
The relation between the two generalized-eigenfunction maps is
\[
        UE_s^{\mathrm{hol}}=\Psi_\theta K_{\mathrm{in}}(\zeta).
\]
Insert
\(\Psi_\theta=UE_s^{\mathrm{hol}}K_{\mathrm{in}}(\zeta)^{-1}\)
into \eqref{eq:standard-plancherel}.  On the unit circle,
\[
        |c_{\mathrm{in}}(e^{i\theta})|^2
        =q+q^{-1}-2\cos(2\theta)=\frac{D(\theta)}q.
\]
Therefore
\[
 \frac1{2\pi}
 K_{\mathrm{in}}^{-1}(K_{\mathrm{in}}^{-1})^*
 =\frac{q}{2\pi(q+1)D(\theta)}W^{-1},
\]
which proves \eqref{eq:automorphic-plancherel}.  Since
\(E_s^{\mathrm{hol}}=\mathcal E_s c_{\mathrm{in}}(\zeta)\) and
\(|c_{\mathrm{in}}(e^{i\theta})|^2=D(\theta)/q\), this is equivalent to
\eqref{eq:constant-term-plancherel}.
\end{proof}

\begin{remark}[Contour parameterization]
On the fundamental half-contour
\(C_{\mathrm u}=\{1/2+i\theta/\log q:0\le\theta\le\pi\}\), the two densities
in Theorem~\ref{thm:automorphic-plancherel} become respectively
\[
 \frac{q\log q}{2\pi i(q+1)D(s)}W^{-1}\,ds,
 \qquad
 \frac{\log q}{2\pi i(q+1)}W^{-1}\,ds,
 \quad
 D(s)=(q-q^{2s-1})(q-q^{1-2s}).
\]
This is only the change of variable \(ds=i\,d\theta/\log q\).
\end{remark}

\begin{theorem}[Unitary Eisenstein transform]
\label{thm:unitary-eisenstein-transform}
Let \(L^2_{\mathrm{Eis}}(Y)=\operatorname{ran}P_{\mathrm{ac}}\).  For
finitely supported \(f\), set
\begin{equation}\label{eq:finite-eisenstein-transform}
 (\mathcal F_{\mathrm{Eis},0}f)(\theta)
 =\frac1{\sqrt{q+1}}W^{-1/2}\mathcal E_{s_\theta}^*f,
 \qquad s_\theta=\frac12+\frac{i\theta}{\log q},
\end{equation}
where each pairing in \(\mathcal E_{s_\theta}^*f\) is a finite sum.  This
map depends only on \(P_{\mathrm{ac}}f\) as an almost-everywhere equivalence
class and extends uniquely to a unitary operator
\begin{equation}\label{eq:unitary-eisenstein-transform}
 \mathcal F_{\mathrm{Eis}}:L^2_{\mathrm{Eis}}(Y)
 \longrightarrow
 L^2\!\left((0,\pi),\mathbb C^r;\frac{d\theta}{2\pi}\right).
\end{equation}
It satisfies
\begin{equation}\label{eq:eisenstein-transform-intertwining}
 (\mathcal F_{\mathrm{Eis}}Af)(\theta)
 =(q^{s_\theta}+q^{1-s_\theta})
   (\mathcal F_{\mathrm{Eis}}f)(\theta)
 =2\sqrt q\cos\theta\,
   (\mathcal F_{\mathrm{Eis}}f)(\theta).
\end{equation}
For \(f\in L^2(Y,\lambda)\), write
\(\widehat f_{\mathrm{Eis}}
=\mathcal F_{\mathrm{Eis}}P_{\mathrm{ac}}f\).  Then
\begin{equation}\label{eq:unitary-eisenstein-inverse}
 P_{\mathrm{ac}}f
 =\frac1{2\pi\sqrt{q+1}}
   \int_0^\pi
   \mathcal E_{s_\theta}W^{-1/2}
   \widehat f_{\mathrm{Eis}}(\theta)\,d\theta
\end{equation}
in the weak, equivalently \(L^2\), sense.
\end{theorem}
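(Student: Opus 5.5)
The plan is to transport Theorem~\ref{thm:unitary-scattering-transform} through the unitary $U$, using the exact relation $U\mathcal E_{s_\theta}=\Psi_\theta\sqrt{q+1}\,W^{1/2}$ from the proof of Theorem~\ref{thm:automorphic-maass-selberg}.

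\textbf{Step 1: identify the two transforms.} Take a finitely supported $f$ on $V(Y)$. The vector $Uf$ is also finitely supported, and $U$ is unitary from $L^2(Y,\lambda)$ onto $\mathscr H$. For every $c$,
\[
 \langle f,\mathcal E_{s_\theta}(e_c)\rangle_{\lambda}
 =\langle Uf,U\mathcal E_{s_\theta}(e_c)\rangle
 =\sqrt{q+1}\,\bigl(W^{1/2}\Psi_\theta^*Uf\bigr)_c ,
\]
because $w_c$ is real and positive. Both pairings are finite sums, so this is a pointwise identity in $\theta$ off $\Theta_F$. It gives
\[
 \mathcal F_{\mathrm{Eis},0}f=\widehat{(Uf)}_0 .
\]
In words, the Eisenstein transform is the standard scattering transform of $Uf$. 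The normalization $(q+1)^{-1/2}W^{-1/2}$ in \eqref{eq:finite-eisenstein-transform} is chosen exactly so that this holds with no extra factor.

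\textbf{Step 2: inherit the properties.} The absolutely continuous projection of $A$ is $U^{-1}P_{\mathrm{ac}}U$, so $U$ maps $L^2_{\mathrm{Eis}}(Y)$ unitarily onto $\mathscr H_{\mathrm{ac}}$. Theorem~\ref{thm:unitary-scattering-transform} then yields three facts:
\begin{itemize}
 \item $\mathcal F_{\mathrm{Eis},0}f$ depends only on $P_{\mathrm{ac}}f$;
 \item it extends uniquely to the unitary map $\mathcal F_{\mathrm{Eis}}=\mathcal F_{\mathrm{ac}}\circ U|_{L^2_{\mathrm{Eis}}(Y)}$;
 \item it is surjective.
\end{itemize}
Since $UAU^{-1}=J$, the intertwining \eqref{eq:unitary-intertwining} becomes \eqref{eq:eisenstein-transform-intertwining}. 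The equality $q^{s_\theta}+q^{1-s_\theta}=2\sqrt q\cos\theta$ follows from $q^{s_\theta}=\sqrt q\,e^{i\theta}$.

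\textbf{Step 3: the inversion formula.} Apply $U^{-1}$ to \eqref{eq:unitary-scattering-inverse}, using $\widehat{Uf}=\widehat f_{\mathrm{Eis}}$ and $U^{-1}\Psi_\theta=(q+1)^{-1/2}\mathcal E_{s_\theta}W^{-1/2}$. This gives \eqref{eq:unitary-eisenstein-inverse}. The weak formulation is preserved because $U$ is unitary and maps finitely supported vectors to finitely supported vectors.

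\textbf{Main obstacle.} The delicate point is justifying the relation $U\mathcal E_{s_\theta}=\sqrt{q+1}\,\Psi_\theta W^{1/2}$ at the level of whole generalized eigenfunctions, not only cusp asymptotics. For $\theta\notin\Theta_F$, both sides are generalized eigenfunctions of $J$ at $2\sqrt q\cos\theta$ with the same incoming amplitude $\sqrt{q+1}\,W^{1/2}e_c$, by Theorem~\ref{thm:normalization-dictionary}. Uniqueness in Theorem~\ref{thm:exact-scattering} therefore forces equality. The finite set $\Theta_F$ is null, so it does not affect the $L^2$ statements; at removable points one uses continuation as in the paper's convention.

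As a consistency check, this also matches Theorem~\ref{thm:automorphic-plancherel}: the density $\frac1{2\pi(q+1)}W^{-1}$ splits as $\bigl((q+1)^{-1/2}W^{-1/2}\bigr)^2/(2\pi)$.
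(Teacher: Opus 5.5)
Your proposal is correct and follows the paper's proof. Both arguments use the relation $U\mathcal E_{s_\theta}=\Psi_\theta\sqrt{q+1}\,W^{1/2}$ to get $\mathcal F_{\mathrm{Eis},0}f=\mathcal F_{\mathrm{ac}}(Uf)$, and then inherit unitarity and the intertwining identity from Theorem~\ref{thm:unitary-scattering-transform}. The differences are cosmetic: you justify that relation explicitly by uniqueness in Theorem~\ref{thm:exact-scattering}, and you get the inversion formula by conjugating \eqref{eq:unitary-scattering-inverse} with $U^{-1}$ rather than by taking the adjoint, which is equivalent.
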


\begin{proof}
The exact change of normalization is
\[
 U\mathcal E_{s_\theta}
 =\Psi_\theta\sqrt{q+1}\,W^{1/2}.
\]
Hence, for finitely supported \(f\),
\[
 \mathcal F_{\mathrm{Eis},0}f
 =\mathcal F_{\mathrm{ac}}(Uf).
\]
Theorem~\ref{thm:unitary-scattering-transform} now proves unitarity and the
intertwining identity.  Taking the adjoint of
\eqref{eq:unitary-eisenstein-transform} gives
\eqref{eq:unitary-eisenstein-inverse}.
\end{proof}

\begin{corollary}[Fixed-level adelic unitary identification]
\label{cor:adelic-unitary-identification}
For every component in Setup~\ref{setup:global-automorphic}, let
\(Y_j=\Gamma_j\backslash\mathcal T\), let \(W_j\) be its cusp-width matrix,
and denote its constant-term-normalized Eisenstein family by
\(\mathcal E_{j,s}\).  Put
\[
 r_{\mathrm{ad}}=\sum_{j=1}^h r_j,
 \qquad
 W_{\mathrm{ad}}=\bigoplus_{j=1}^h W_j,
 \qquad
 L^2_{\mathrm{Eis,ad}}
 =\bigoplus_{j=1}^h L^2_{\mathrm{Eis}}(Y_j).
\]
Order the columns of \(E_s^{\mathrm{ad}}\) by the bijection
\eqref{eq:adelic-cusp-parametrization}.  Then, initially on vectors whose
component restrictions are finitely supported,
\begin{equation}\label{eq:adelic-unitary-transform}
 (\mathcal F_{\mathrm{Eis}}^{\mathrm{ad}}f)(\theta)
 =\frac1{\sqrt{q+1}}W_{\mathrm{ad}}^{-1/2}
   (E_{s_\theta}^{\mathrm{ad}})^*f
\end{equation}
extends uniquely to a unitary operator
\begin{equation}\label{eq:adelic-unitary-target}
 \mathcal F_{\mathrm{Eis}}^{\mathrm{ad}}:
 L^2_{\mathrm{Eis,ad}}
 \longrightarrow
 L^2\!\left((0,\pi),\mathbb C^{r_{\mathrm{ad}}};
               \frac{d\theta}{2\pi}\right).
\end{equation}
It conjugates \(\mathsf T_\infty\) to multiplication by
\(2\sqrt q\cos\theta\).  Thus the fixed-level spherical automorphic
Eisenstein transform and the orthogonal direct sum of the finite-channel
Jacobi transforms are unitarily identified after precisely the stabilizer
and cusp-width conjugations displayed above.
\end{corollary}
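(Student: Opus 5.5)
The plan is to reduce the adelic statement to the componentwise Theorem~\ref{thm:unitary-eisenstein-transform}, using Proposition~\ref{prop:adelic-component-compatibility} as the bridge. First, by the measure-preserving decomposition \eqref{eq:adelic-vertex-decomposition}, the $K_f\mathcal K$-fixed adelic space is the orthogonal direct sum $\bigoplus_j L^2(Y_j,\lambda_j)$. Under this sum, $\mathsf T_\infty$ acts as $\bigoplus_j A_j$, again by Proposition~\ref{prop:adelic-component-compatibility}. Since the direct sum is orthogonal and each $A_j$ is bounded, the absolutely continuous projection of $\mathsf T_\infty$ is $\bigoplus_j P_{\mathrm{ac},j}$. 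Hence its range is exactly $L^2_{\mathrm{Eis,ad}}$.

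Next I would identify the transform \eqref{eq:adelic-unitary-transform} blockwise. Order the columns by the bijection \eqref{eq:adelic-cusp-parametrization}. Then \eqref{eq:adelic-column-component-restriction} shows that the column indexed by $\mathfrak c\leftrightarrow(j,c)$ restricts to $E^0_{j,c}$ on component $j$ and vanishes on every other component. By Proposition~\ref{prop:classical-Eisenstein-identification}, $E^0_{j,c}$ is the $c$-th column of $\mathcal E_{j,s}$, first for $\Re s>1$ and then by meromorphic continuation, in particular on the unitary axis away from the finite singular sets. Thus $E^{\mathrm{ad}}_s=\bigoplus_j\mathcal E_{j,s}$ as a block-diagonal column family.

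Take $f=(f_j)_j$ with each $f_j$ finitely supported. Then $(E^{\mathrm{ad}}_{s_\theta})^*f$ is the stacked vector $(\mathcal E_{j,s_\theta}^*f_j)_j$. Since $W_{\mathrm{ad}}$ is block diagonal, \eqref{eq:adelic-unitary-transform} equals $\bigoplus_j\mathcal F_{\mathrm{Eis},0}^{(j)}f_j$. Here one must check that the adelic pairing agrees with the componentwise weighted pairing. This holds because the quotient measure restricts on each component to the measure $\lambda_j$ of Setup~\ref{setup:quotient-adjacency}, with no extra scalar.

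Finally, Theorem~\ref{thm:unitary-eisenstein-transform} applied to each $Y_j$ gives a unitary $\mathcal F^{(j)}_{\mathrm{Eis}}$ onto $L^2((0,\pi),\mathbb C^{r_j};d\theta/2\pi)$ that intertwines $A_j$ with multiplication by $2\sqrt q\cos\theta$. The direct sum of finitely many unitaries is unitary onto $L^2((0,\pi),\bigoplus_j\mathbb C^{r_j};d\theta/2\pi)=L^2((0,\pi),\mathbb C^{r_{\mathrm{ad}}};d\theta/2\pi)$, and it intertwines $\bigoplus_jA_j=\mathsf T_\infty$ with the same multiplication operator. Uniqueness of the extension holds because the vectors with finitely supported component restrictions, after projection, are dense in $L^2_{\mathrm{Eis,ad}}$.

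The step most likely to need care is the identification of the adelic inner product and the measure with the sum of the $\lambda_j$-weighted spaces. This covers the Haar normalization $\operatorname{vol}(K_f)=1$ and the fact that finite stabilizers in $\Gamma_j$ match the fibers of the adelic double-coset quotient. I would verify it by computing the measure of a single vertex fiber $\mathbf G(k)\backslash\mathbf G(k)(g_\infty,g_j)K_f\mathcal K$ and showing that it equals $|\Gamma_{j,v}|^{-1}$. This is the content of the measure-preservation claim in Setup~\ref{setup:global-automorphic}, which can be invoked directly.
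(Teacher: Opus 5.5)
Your proposal is correct and follows the paper's proof. Proposition~\ref{prop:adelic-component-compatibility} splits the adelic family, the measure, and $\mathsf T_\infty$ into components; Proposition~\ref{prop:classical-Eisenstein-identification} identifies each block with $\mathcal E_{j,s}$; and the finite orthogonal direct sum of the componentwise unitaries from Theorem~\ref{thm:unitary-eisenstein-transform} gives the result. Your additional checks (the block-diagonal pairing, $P_{\mathrm{ac}}$ of the direct sum, and the fiber volume $|\Gamma_{j,v}|^{-1}$) only make explicit what the paper absorbs into the measure-preservation statement of Setup~\ref{setup:global-automorphic}.
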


\begin{proof}
Proposition~\ref{prop:adelic-component-compatibility} identifies the
cusp-coordinate adelic family with
\(\bigoplus_jE_{j,s}^{\mathrm{cl}}\), including its measure and Hecke
normalizations.  Proposition~\ref{prop:classical-Eisenstein-identification}
identifies each component family with \(\mathcal E_{j,s}\), and
Theorem~\ref{thm:unitary-eisenstein-transform} gives the corresponding
unitary transform.  Taking their finite orthogonal direct sum proves
\eqref{eq:adelic-unitary-transform}--\eqref{eq:adelic-unitary-target} and the
intertwining assertion.
\end{proof}

\begin{corollary}[Complete spherical automorphic decomposition]
\label{thm:arithmetic-decomposition}
Under Setups~\ref{setup:quotient-adjacency} and
\ref{setup:global-automorphic}, suppose that
\(Y=\Gamma\backslash\mathcal T\) has \(r\) cusps, and put
\(W=\diag(w_1,\ldots,w_r)\).  Then
\begin{equation}\label{eq:automorphic-direct-sum}
 L^2(Y,\lambda)=L^2_{\mathrm{disc}}(Y)\oplus L^2_{\mathrm{Eis}}(Y),
 \qquad
 \Spec_{\mathrm{ess}}(A)=[-2\sqrt q,2\sqrt q],
\end{equation}
where \(L^2_{\mathrm{disc}}(Y)\) denotes the pure point subspace and is
finite-dimensional,
there is no singular continuous spectrum, and the open-band multiplicity is
\(r\) almost everywhere.  If \(\{\varphi_j\}_{j=1}^N\) is an orthonormal
basis of \(L^2_{\mathrm{disc}}(Y)\), then
\begin{equation}
 I
 =\sum_{j=1}^N\varphi_j\varphi_j^*
   +\int_0^\pi \mathcal E_s\,
     \frac{1}{2\pi(q+1)}W^{-1}\,
     \mathcal E_s^*\,d\theta,
 \label{eq:complete-constant-term-resolution}
\end{equation}
where \(s=\frac12+i\theta/\log q\), as a weak identity first on finitely
supported functions and then by density.
\end{corollary}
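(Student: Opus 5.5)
The plan is to transport everything already proved for the standard Jacobi model back to \(L^2(Y,\lambda)\) through the unitary \(U f=\lambda^{1/2}f\).

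\textbf{Step 1 (normal form).} First I check the hypotheses of Proposition~\ref{prop:unitary-normal-form}. On a homogeneous cusp vertex \((c,n)\), the defining properties of the cut in Setup~\ref{setup:global-automorphic} give that one tree neighbour raises \(h_c\) and \(q\) neighbours lower it. By \eqref{eq:one-vertex-per-horosphere}, these neighbours map to \((c,n+1)\) and \((c,n-1)\), respectively. Hence \eqref{eq:quotient-adjacency} gives \((Af)(c,n)=qf(c,n-1)+f(c,n+1)\). The width relation \(\lambda(c,n)=w_c(q+1)q^{-n}\) is \eqref{eq:cusp-width-definition}. So \(J=UAU^{-1}\) has the block form \eqref{eq:block-J} with \(r\) free tails.

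\textbf{Step 2 (spectral type).} Since \(U\) is unitary, the conclusions of Proposition~\ref{prop:spectral-type} transfer verbatim to \(A\). These give \(\Spec_{\mathrm{ess}}(A)=[-2\sqrt q,2\sqrt q]\), no singular continuous part, and a finite-dimensional pure point subspace \(L^2_{\mathrm{disc}}(Y)=U^{-1}(\text{pure point subspace of }J)\). They also give multiplicity \(r\) almost everywhere on the open band. Setting \(L^2_{\mathrm{Eis}}(Y)=\operatorname{ran}P_{\mathrm{ac}}\), the orthogonal sum \eqref{eq:automorphic-direct-sum} is then just the decomposition into pure point and absolutely continuous parts. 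This uses that the singular continuous part is empty.

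\textbf{Step 3 (resolution).} For the resolution I take \eqref{eq:complete-standard-resolution} and conjugate it by \(U^{-1}\). An orthonormal basis \(\{U\varphi_j\}\) of the pure point subspace of \(J\) contributes \(\sum_j\varphi_j\varphi_j^*\) in \(L^2(Y,\lambda)\). Here the adjoint is taken in the \(\lambda\)-inner product, and \(U\) is unitary, so no weight appears. The continuous term equals \(P_{\mathrm{ac}}\) of \(A\), and Theorem~\ref{thm:automorphic-plancherel} identifies it through \eqref{eq:constant-term-plancherel}. Concretely, one uses \(U\mathcal E_s=\Psi_\theta\sqrt{q+1}\,W^{1/2}\), which holds by Theorem~\ref{thm:normalization-dictionary}. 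With it, \(\frac1{2\pi}\Psi_\theta\Psi_\theta^*\) becomes \(\mathcal E_s\frac{1}{2\pi(q+1)}W^{-1}\mathcal E_s^*\). By Proposition~\ref{prop:classical-Eisenstein-identification}, \(\mathcal E_s\) is the classical raw Eisenstein family, so the formula is stated for genuine Eisenstein series.

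\textbf{Main obstacle.} The only delicate point is interpreting the weak identities. Both sides are first evaluated on finitely supported \(f\), where each pairing \(\mathcal E_s^*f\) is a finite sum. The finite set \(\Theta_F\), together with the zeros of \(c_{\mathrm{in}}\) on the unit circle, is a null set and does not affect the integral. The identity then extends by density, using the boundedness given by the Parseval identity \eqref{eq:standard-parseval}.
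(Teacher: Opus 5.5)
Your proposal is correct and follows the paper's own argument: you put \(Y\) in finite-core-plus-homogeneous-rays form so that Propositions~\ref{prop:unitary-normal-form} and \ref{prop:spectral-type} apply. You then obtain the resolution from Theorem~\ref{thm:automorphic-plancherel}, equivalently by conjugating \eqref{eq:complete-standard-resolution} by \(U\) via \(U\mathcal E_s=\Psi_\theta\sqrt{q+1}\,W^{1/2}\), and add the finite-dimensional point spectrum.

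The only difference is that you derive homogeneity of the rays from the Setup rather than citing reduction theory. That works provided you also take finiteness of the core from reduction theory, and note that at \(n=1\) the \(q\) downward neighbours lie in the core and are recorded by \(V\) rather than by a vertex \((c,0)\).
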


\begin{proof}
Reduction theory gives a finite graph of groups followed by finitely many
homogeneous cuspidal rays \cite{Serre,Harder,BassLubotzky,Bravo}; hence
Propositions~\ref{prop:unitary-normal-form} and \ref{prop:spectral-type}
apply.  The resolution is Theorem~\ref{thm:automorphic-plancherel} together
with the finite-dimensional point spectrum.
\end{proof}

\begin{remark}[Thresholds]\label{rem:thresholds}
At \(\zeta=\pm1\), the two tail modes coalesce and the boundary-flux factor
\(a(\zeta^{-1}-\zeta)\) vanishes.  All scattering and jump identities above
are statements on the open
band, with endpoint values obtained only when the relevant limits exist.
For \(\sigma\in\{1,-1\}\), a nonzero vector \(u\in\ker F(\sigma)\) gives the
threshold solution with tail
\(g(n)=\sigma^n a^{-1}V^*u\).  This solution is square-integrable exactly when
\(V^*u=0\); it is then supported in the core and contributes an atom at
\(x=2a\sigma\).  If \(V^*u\ne0\), the nonzero constant tail is not in
\(\ell^2\).  More general threshold solutions may contain the second,
coalesced solution \(n\sigma^n\), and their classification requires a
separate endpoint analysis not undertaken here.  Since the endpoints have
zero Lebesgue measure, this qualification does not alter the absolutely
continuous integrals.
\end{remark}

\section{Core-supported point spectrum and determinant cancellation}
\label{sec:cusp-invisible}

The finite determinant also records eigenfunctions that vanish identically on
every cusp.  They belong to the point spectrum but do not occur in the
Eisenstein coefficients.  Separating them is necessary when zeros of the
finite determinant are compared with poles of the scattering matrix.
Exceptional discrete modes that vanish on the noncompact branches, and the
resulting cancellations in scattering or \(L\)-function quotients, already
occur in Chekhov's \(p\)-adic graph scattering
\cite{Chekhov,ChekhovSurvey}.  The statement below packages this phenomenon
as the maximal \(H_C\)-invariant subspace orthogonal to the labelled weighted
cusp attachments and keeps its spectral multiplicities explicit.

\begin{definition}[Core subspace orthogonal to the cusp attachments]
\label{def:cusp-invisible-subspace}
Let
\begin{equation}\label{eq:cusp-invisible-subspace}
        \mathscr H_0
        =\text{the largest \(H_C\)-invariant subspace contained in }\ker V^*.
\end{equation}
Because \(H_C\) is Hermitian, \(\mathscr H_0\) is reducing.  Write
\[
        \CC^d=\mathscr H_0\oplus\mathscr H_0^\perp,
        \qquad
        H_C=H_0\oplus H_{\mathrm{coupled}}.
\]
Since \(\operatorname{ran}V\subset\mathscr H_0^\perp\), let
\(V_{\mathrm{coupled}}:\CC^r\to\mathscr H_0^\perp\) denote \(V\) with its
codomain restricted to \(\mathscr H_0^\perp\).
\end{definition}

\begin{theorem}[Core-supported spectrum and factorization]
\label{thm:core-supported-factorization}
The vectors \((u,0)\) with \(u\in\mathscr H_0\) are exactly the linear span
of all eigenvectors of \(J\) supported entirely in the finite core.  Relative
to the above decomposition,
\begin{equation}\label{eq:F-core-supported-factorization}
        F(\zeta)
        =\bigl(H_0-x(\zeta)I\bigr)\oplus F_{\mathrm{coupled}}(\zeta),
\end{equation}
where
\[
        F_{\mathrm{coupled}}(\zeta)
        =H_{\mathrm{coupled}}-x(\zeta)I
         +\frac{\zeta^{-1}}aV_{\mathrm{coupled}}V_{\mathrm{coupled}}^*.
\]
Hence
\begin{equation}\label{eq:detS-coupled-part}
        \det S(\zeta)
        =(-1)^r
          \frac{\det F_{\mathrm{coupled}}(\zeta^{-1})}
               {\det F_{\mathrm{coupled}}(\zeta)}.
\end{equation}
In particular, every core-supported eigenvalue contributes the
reciprocal-invariant factor \(\det(H_0-x(\zeta)I)\) to \(\det F(\zeta)\), and
that factor cancels identically from the scattering determinant.
\end{theorem}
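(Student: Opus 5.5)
The argument splits into three steps: characterize core-supported eigenvectors, get the block decomposition of \(F\), and then take determinants.

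\emph{Core-supported eigenvectors.} Let \((u,0)\) be an eigenvector of \(J\) with eigenvalue \(x\) and zero tail. By \eqref{eq:block-J}, \(J(u,0)=(H_Cu,\Gamma_1V^*u)\). So \((u,0)\) is an eigenvector exactly when \(H_Cu=xu\) and \(V^*u=0\). Let \(\mathscr L\) be the span of all such \(u\). It is spanned by \(H_C\)-eigenvectors lying in \(\ker V^*\), so it is \(H_C\)-invariant and contained in \(\ker V^*\), hence \(\mathscr L\subset\mathscr H_0\).

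Conversely, \(\mathscr H_0\) is invariant under the Hermitian matrix \(H_C\). The restriction of \(H_C\) to \(\mathscr H_0\) is therefore Hermitian and diagonalizable, so \(\mathscr H_0\) has an orthonormal basis of \(H_C\)-eigenvectors. Each of these lies in \(\ker V^*\), so \(\mathscr H_0\subset\mathscr L\). This proves the first assertion. It also justifies the reducing property noted in Definition~\ref{def:cusp-invisible-subspace}: invariance under a Hermitian matrix gives invariance of the orthogonal complement.

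\emph{Block form of \(F\).} Since \(\mathscr H_0\subset\ker V^*\), we have \(\operatorname{ran}V=(\ker V^*)^\perp\subset\mathscr H_0^\perp\). Hence \(VV^*\) vanishes on \(\mathscr H_0\) and maps \(\mathscr H_0^\perp\) into itself, and on \(\mathscr H_0^\perp\) it equals \(V_{\mathrm{coupled}}V_{\mathrm{coupled}}^*\). The terms \(H_C\) and \(x(\zeta)I_d\) are block diagonal because \(\mathscr H_0\) reduces \(H_C\). Substituting into \eqref{eq:F-zeta} gives \eqref{eq:F-core-supported-factorization}, so
\[
\det F(\zeta)=\det\bigl(H_0-x(\zeta)I\bigr)\det F_{\mathrm{coupled}}(\zeta).
\]

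\emph{The scattering determinant.} Since \(x(\zeta^{-1})=x(\zeta)\), the factor \(\det(H_0-x(\zeta)I)\) is invariant under \(\zeta\mapsto\zeta^{-1}\). In the quotient \eqref{eq:determinant-identity} of Theorem~\ref{thm:determinant-identity} it therefore cancels as a nonzero meromorphic function. This leaves \eqref{eq:detS-coupled-part}, first wherever all the determinants involved are nonzero and then as a meromorphic identity.

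As an alternative, one can apply Theorem~\ref{thm:determinant-identity} directly to the reduced model \((H_{\mathrm{coupled}},V_{\mathrm{coupled}})\). By \eqref{eq:S-from-F} and the block form, \(V^*F(\zeta)^{-1}V=V_{\mathrm{coupled}}^*F_{\mathrm{coupled}}(\zeta)^{-1}V_{\mathrm{coupled}}\), so the reduced model has the same \(S\).

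One point needs care: \(\det(H_0-x(\zeta)I)\) must not vanish identically, so that the cancellation is legitimate. This holds because it is a polynomial in \(x(\zeta)\) with leading coefficient \(\pm1\) (or the empty determinant \(1\) when \(\mathscr H_0=0\)). Every other step is direct linear algebra, so I do not expect a real obstacle; the main thing to watch is the matching of the two inclusions between \(\mathscr L\) and \(\mathscr H_0\).
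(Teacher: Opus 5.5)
Your proposal is correct and follows essentially the same route as the paper: you characterize core-supported eigenvectors by $H_Cu=xu$ and $V^*u=0$, use $\operatorname{ran}V\subset\mathscr H_0^\perp$ to block-diagonalize $F(\zeta)$, and cancel the reciprocal-invariant factor $\det(H_0-x(\zeta)I)$ in the determinant identity of Theorem~\ref{thm:determinant-identity}. Two of your points are only implicit in the paper, namely that $\mathscr H_0$ has an eigenbasis of $H_C$ and that the cancelled factor is not identically zero, and your alternative via the reduced model $(H_{\mathrm{coupled}},V_{\mathrm{coupled}})$ is a harmless extra.
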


\begin{proof}
If \(u\in\mathscr H_0\) is an \(H_C\)-eigenvector, then \(V^*u=0\), so
\((u,0)\) is an eigenvector of \(J\).  Conversely, a core-supported
eigenvector satisfies \(H_Cu=\lambda u\) and \(V^*u=0\); the span of all such
vectors is \(H_C\)-invariant and contained in \(\ker V^*\), hence lies in
\(\mathscr H_0\).

Since \(\mathscr H_0\) reduces \(H_C\) and
\(\operatorname{ran}V\subset\mathscr H_0^\perp\), both \(H_C\) and \(VV^*\)
are block diagonal as in \eqref{eq:F-core-supported-factorization}.  The first
factor depends only on \(x(\zeta)=x(\zeta^{-1})\).  Substitution into
\eqref{eq:determinant-identity} cancels this factor and gives
\eqref{eq:detS-coupled-part}.
\end{proof}

\begin{corollary}[Divisor comparison]\label{cor:divisor-comparison}
For \(\eta\in\CC^\times\), let
\[
        m_F(\eta)=\operatorname{ord}_{\eta}\det F,
\]
where the order is zero when \(\det F(\eta)\ne0\).  Then
\begin{equation}\label{eq:divisor-comparison}
        \operatorname{ord}_{\zeta_0}\det S
        =m_F(\zeta_0^{-1})-m_F(\zeta_0).
\end{equation}
Here a positive order denotes a zero and a negative order a pole.  In
particular, if \(\zeta_0\ne\pm1\) is a zero of \(\det F\) of order \(m\) and
\(\det F(\zeta_0^{-1})\ne0\), then \(\det S\) has a pole of order \(m\) at
\(\zeta_0\).  If both reciprocal parameters are zeros, their determinantal
orders cancel to the difference in \eqref{eq:divisor-comparison}.  No
identification of this order with the dimension of a generalized
resonant-state space is asserted here.
\end{corollary}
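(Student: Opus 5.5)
The plan is to read the divisor of \(\det S\) directly off the reciprocal determinant identity of Theorem~\ref{thm:determinant-identity}. Both sides of \eqref{eq:determinant-identity} are meromorphic on \(\CC^\times\). The function \(\det F(\zeta)\) is a Laurent polynomial in \(\zeta\), since every entry of \(F(\zeta)=H_C-a(\zeta+\zeta^{-1})I_d+\zeta^{-1}a^{-1}VV^*\) is a Laurent polynomial.

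The first step is to check that \(\det F\) is not identically zero. This is needed so that the orders \(m_F(\eta)\) are finite integers and the quotient in \eqref{eq:determinant-identity} is a genuine meromorphic function rather than an indeterminate form.
\begin{itemize}
\item I would argue this by taking \(|\zeta|\to\infty\).
\item There \(F(\zeta)=-a\zeta\bigl(I_d+O(\zeta^{-1})\bigr)\).
\item Hence \(\det F(\zeta)\sim(-a\zeta)^d\neq0\) for large \(|\zeta|\).
\end{itemize}
The same asymptotic shows that \(\zeta\mapsto\det F(\zeta^{-1})\) is also a nonzero Laurent polynomial. Therefore \(\det S\) is a nonzero meromorphic function on \(\CC^\times\), and \eqref{eq:determinant-identity} holds identically on \(\CC^\times\) by the identity theorem. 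Only the finitely many points where \(\det F(\zeta)\det F(\zeta^{-1})=0\) need separate attention, and continuation handles them.

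The second step is to take orders at a point \(\zeta_0\in\CC^\times\).
\begin{itemize}
\item Orders of vanishing are additive on products and quotients of nonzero meromorphic functions.
\item The constant \((-1)^r\) has order zero.
\item Hence \(\operatorname{ord}_{\zeta_0}\det S=\operatorname{ord}_{\zeta_0}\bigl[\det F(\zeta^{-1})\bigr]-\operatorname{ord}_{\zeta_0}\det F\).
\end{itemize}
The one point needing a line of justification is \(\operatorname{ord}_{\zeta_0}[g(\zeta^{-1})]=\operatorname{ord}_{\zeta_0^{-1}}g\) for \(g=\det F\). This follows because \(\zeta\mapsto\zeta^{-1}\) is a biholomorphism of \(\CC^\times\) with nonvanishing derivative. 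Concretely, \(\zeta^{-1}-\zeta_0^{-1}=-(\zeta-\zeta_0)/(\zeta\zeta_0)\) is a unit multiple of \(\zeta-\zeta_0\) near \(\zeta_0\). This gives \eqref{eq:divisor-comparison}.

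The special cases then follow immediately from \eqref{eq:divisor-comparison}.
\begin{itemize}
\item If \(m_F(\zeta_0)=m\) and \(m_F(\zeta_0^{-1})=0\), the order of \(\det S\) at \(\zeta_0\) is \(-m\), which is a pole of order \(m\).
\item If both \(\zeta_0\) and \(\zeta_0^{-1}\) are zeros of \(\det F\), the order is the difference \(m_F(\zeta_0^{-1})-m_F(\zeta_0)\).
\end{itemize}
The restriction \(\zeta_0\neq\pm1\) in the statement is interpretive only, since these are the threshold points of Remark~\ref{rem:thresholds}. The formula itself holds at \(\pm1\) too, where the right side vanishes because \(\zeta_0=\zeta_0^{-1}\).

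I expect no real obstacle. The only care point is the nonvanishing of \(\det F\), which the large-\(\zeta\) asymptotic settles. The final disclaimer about resonant-state dimensions makes no claim and requires nothing.
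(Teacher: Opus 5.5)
Your proposal is correct and follows the paper's proof: apply \(\operatorname{ord}_{\zeta_0}\) to the reciprocal determinant identity \eqref{eq:determinant-identity} and use that \(\zeta\mapsto\zeta^{-1}\) is biholomorphic on \(\CC^\times\), so \(\operatorname{ord}_{\zeta_0}\det F(\zeta^{-1})=m_F(\zeta_0^{-1})\). Your additional check that \(\det F\not\equiv0\), from \(\det F(\zeta)\sim(-a\zeta)^d\) as \(|\zeta|\to\infty\), fills in a detail the paper leaves implicit, and it ensures that all the orders involved are finite integers.
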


\begin{proof}
Apply \(\operatorname{ord}_{\zeta_0}\) to
\eqref{eq:determinant-identity}.  The inversion map is biholomorphic on
\(\CC^\times\), so
\[
        \operatorname{ord}_{\zeta_0}\det F(\zeta^{-1})
        =\operatorname{ord}_{\zeta_0^{-1}}\det F.
\]
This gives \eqref{eq:divisor-comparison} and its consequences.
\end{proof}

\begin{remark}[Embedded eigenvalues]\label{rem:embedded-core-spectrum}
If an eigenvalue of \(H_0\) lies in \([-2a,2a]\), it is an embedded eigenvalue
of \(J\).  It is nevertheless orthogonal to every cusp and does not change
\(S(\zeta)\).  This explains algebraically why the finite matrix may be
singular on the unit circle while the scattering matrix remains regular after
the core-supported factor is removed.  The condition agrees with
\cite[Prop.~6.12 and Sec.~7.9]{ArendsPetersonWeich}: an embedded square-integrable
eigenfunction vanishes on all cusps, equivalently at every cusp-attachment
vertex.
\end{remark}

\section{Comparison with a finite resonance matrix}
\label{sec:resonance-comparison}

Arends, Peterson, and Weich introduced a finite matrix whose determinant and
kernel describe resonances and outgoing resonant states on geometrically finite
graphs of groups \cite[Sec.~7.1]{ArendsPetersonWeich}.  Their theory includes
both cuspidal and noncuspidal regular ends.  We use only the cusp case arising
from non-uniform arithmetic lattices in \(\mathrm{PGL}_2\).  The purpose of
this section is not to rederive their resonance set, but to identify the exact
normalization by which their finite matrix yields Eisenstein constant terms,
the unitary transform, and the Maass--Selberg relation proved above.

Let \(\mathcal L\) be their compact core and let \(A_{\mathcal L}\) be its weighted adjacency matrix.  For a cusp-only graph of groups they define
\begin{equation}\label{eq:finite-resonance-matrix}
        H_{\mathrm{res}}(\mu)
        =\frac1{2a}\bigl(A_{\mathcal L}+B(\mu)\bigr)
         -z(\mu)I,
        \qquad
        z(\mu)=\frac{\mu+\mu^{-1}}2,
\end{equation}
where \(B(\mu)\) is the diagonal matrix with
\[
        B(\mu)_{vv}=c_v\frac{a}{\mu}.
\]
Here \(c_v\) is the total cusp attachment multiplicity at \(v\).  Let \(U_C\) be the restriction of the stabilizer-weight conjugation to the core and put
\[
        \widetilde H_{\mathrm{res}}(\mu)
        =U_CH_{\mathrm{res}}(\mu)U_C^{-1}.
\]
Label the \(r\) geometric cusp channels.  If \(\varepsilon_c\) is the first
directed cusp edge, attached to the core at \(v(c)\), use the stabilizers from
Setup~\ref{setup:quotient-adjacency} and set
\[
        m_c=\frac{|\Gamma_{v(c)}|}{|\Gamma_{\varepsilon_c}|}.
\]
This is the directed cusp-attachment index, so
\(c_v=\sum_{c:v(c)=v}m_c\).  If \(e_c\) and \(e_v\) are the standard coordinate vectors of \(\CC^r\) and \(\CC^d\), respectively, define
\begin{equation}\label{eq:attachment-C}
        C e_c=\sqrt{m_c}\,e_{v(c)},
        \qquad
        CC^*=\diag(c_v),
        \qquad
        V=aC.
\end{equation}

\begin{theorem}[Exact normalization and recovery of Eisenstein coefficients]
\label{thm:finite-resonance-comparison}
With the common parameter
\[
        \zeta=\mu=q^{s-1/2},
\]
the finite matrices satisfy the exact identity
\begin{equation}\label{eq:F-equals-resonance-matrix}
        F(\mu)=2a\,\widetilde H_{\mathrm{res}}(\mu).
\end{equation}
Consequently, whenever \(\det H_{\mathrm{res}}(\mu)\ne0\), the labelled
Jacobi scattering matrix is
\begin{equation}\label{eq:S-from-resonance-matrix}
        S(\mu)
        =-I_r+\frac{\mu^{-1}-\mu}{2}
          C^*\widetilde H_{\mathrm{res}}(\mu)^{-1}C,
\end{equation}
and its determinant is
\begin{equation}\label{eq:detS-from-resonance-matrix}
        \det S(\mu)
        =(-1)^r
          \frac{\det H_{\mathrm{res}}(\mu^{-1})}
               {\det H_{\mathrm{res}}(\mu)}.
\end{equation}
The classical constant-term matrix is therefore
\begin{equation}\label{eq:automorphic-scattering-from-resonance-matrix}
        \mathcal S_{\mathrm{aut}}(s)
        =W^{-1/2}
          \left[-I_r+\frac{\mu^{-1}-\mu}{2}
          C^*\widetilde H_{\mathrm{res}}(\mu)^{-1}C\right]
          W^{1/2}.
\end{equation}
Thus the finite resonance matrix, together with the labelled attachment
operator \(C\), determines all Eisenstein constant-term coefficients.  Its
determinant alone determines only the scattering determinant through
\eqref{eq:detS-from-resonance-matrix}.
If
\[
        m_{\mathrm{res}}(\eta)
        =\operatorname{ord}_{\mu=\eta}\det H_{\mathrm{res}}(\mu),
\]
then its determinantal resonance multiplicities determine the scattering-determinant divisor by
\begin{equation}\label{eq:resonance-divisor-comparison}
        \operatorname{ord}_{\mu_0}\det S
        =m_{\mathrm{res}}(\mu_0^{-1})
         -m_{\mathrm{res}}(\mu_0).
\end{equation}
All displayed identities extend meromorphically in \(\mu\in\CC^\times\), with the thresholds \(\mu=\pm1\) interpreted by limits when they exist.
\end{theorem}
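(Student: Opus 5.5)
The heart of the argument is the single matrix identity \eqref{eq:F-equals-resonance-matrix}. Every other display then follows by substitution into results already proved: Theorem~\ref{thm:exact-scattering}, Theorem~\ref{thm:determinant-identity}, Theorem~\ref{thm:normalization-dictionary} and Corollary~\ref{cor:divisor-comparison}.

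To prove \eqref{eq:F-equals-resonance-matrix}, I would compute the two summands of $F(\mu)=H_C-x(\mu)I_d+\mu^{-1}a^{-1}VV^*$ separately and compare them with $2a\widetilde H_{\mathrm{res}}(\mu)=U_C A_{\mathcal L}U_C^{-1}+U_CB(\mu)U_C^{-1}-a(\mu+\mu^{-1})I$.

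First, by Proposition~\ref{prop:unitary-normal-form}, $H_C$ is the core compression of $UAU^{-1}$. Since $U$ is diagonal, this compression is $U_CA_{\mathcal L}U_C^{-1}$, provided $A_{\mathcal L}$ is the compression of the weighted adjacency \eqref{eq:quotient-adjacency} to the compact core $\mathcal L$. This requires fixing the cut so that $\mathcal L$ coincides with the finite core of the standard model. I take that as the convention: the first vertex of each cusp channel is the first homogeneous vertex, and all earlier vertices are absorbed into $\mathcal L$.

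Second, $x(\mu)I=a(\mu+\mu^{-1})I$ matches the scalar term exactly, since $2a\,z(\mu)=a(\mu+\mu^{-1})$.

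Third, $B(\mu)$ is diagonal, so it commutes with $U_C$, and $U_CB(\mu)U_C^{-1}=a\mu^{-1}\diag(c_v)$. On the other side, $\mu^{-1}a^{-1}VV^*=\mu^{-1}a^{-1}a^2CC^*=a\mu^{-1}\diag(c_v)$. The two agree provided $V=aC$.

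So the real content is verifying that the attachment block in \eqref{eq:block-J} is $V=aC$, i.e.\ $Ve_c=a\sqrt{m_c}\,e_{v(c)}$.

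I would check this from the conjugated adjacency. By \eqref{eq:quotient-adjacency}, $(Af)(c,1)$ contains the term $[\Gamma_{(c,1)}:\Gamma_{\bar\varepsilon_c}]f(v(c))$, and the core row contains $m_c f(c,1)$. After conjugation, the $(v(c),(c,1))$ entry is $\lambda(v(c))^{1/2}m_c\lambda(c,1)^{-1/2}$. The orbit--stabilizer identity from Setup~\ref{setup:quotient-adjacency} gives $\lambda(v(c))m_c=\lambda(c,1)[\Gamma_{(c,1)}:\Gamma_{\bar\varepsilon_c}]$, so the entry equals $\sqrt{m_c\,[\Gamma_{(c,1)}:\Gamma_{\bar\varepsilon_c}]}$.

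The step I expect to be the main obstacle is showing that this entry equals $a\sqrt{m_c}$, i.e.\ that the reverse index $[\Gamma_{(c,1)}:\Gamma_{\bar\varepsilon_c}]$ equals $q$. This should follow from homogeneity of the cut. The vertex $(c,1)$ has exactly $q$ downward directed neighbours counted with multiplicity, and by \eqref{eq:one-vertex-per-horosphere} they all point to the single vertex below it, namely the core vertex $v(c)$. This matches the ray recurrence $(Af)(c,n)=qf(c,n-1)+f(c,n+1)$ at $n=1$.

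A subtle point is that the labelled attachments must be distinct channels. Several cusps may attach to the same $v$, and $c_v=\sum_{c:v(c)=v}m_c$ handles exactly this. So $CC^*=\diag(c_v)$ holds because the columns $Ce_c$ supported at the same vertex add in $CC^*$ without cross terms.

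With \eqref{eq:F-equals-resonance-matrix} in hand, the remaining displays are substitutions:

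1. Inserting $F=2a\widetilde H_{\mathrm{res}}$ and $V=aC$ into \eqref{eq:S-from-F} gives \eqref{eq:S-from-resonance-matrix}, since $\frac{\mu^{-1}-\mu}{a}\cdot\frac{a^2}{2a}=\frac{\mu^{-1}-\mu}{2}$.
2. In Theorem~\ref{thm:determinant-identity}, the factor $(2a)^d$ cancels, and conjugation by $U_C$ does not change determinants; this gives \eqref{eq:detS-from-resonance-matrix}.
3. Substituting into \eqref{eq:automorphic-scattering-dictionary} gives \eqref{eq:automorphic-scattering-from-resonance-matrix}.
4. Since $m_{\mathrm{res}}=m_F$, Corollary~\ref{cor:divisor-comparison} gives \eqref{eq:resonance-divisor-comparison}.

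Meromorphic extension in $\mu\in\CC^\times$ follows because every entry involved is rational in $\mu$.
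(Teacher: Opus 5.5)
Your proposal is correct and follows the paper's proof. Conjugation by \(U_C\) turns \(A_{\mathcal L}\) into \(H_C\), the diagonal term \(B(\mu)\) matches \(\mu^{-1}a^{-1}VV^*\) through \(V=aC\), and the remaining displays are the same substitutions into Theorems~\ref{thm:exact-scattering}, \ref{thm:determinant-identity}, \ref{thm:normalization-dictionary} and Corollary~\ref{cor:divisor-comparison}. The one difference is that the paper builds \(V=aC\) directly into \eqref{eq:attachment-C}, whereas you derive it; the reverse index \(q\) you need at \((c,1)\) comes from the homogeneity hypothesis of Proposition~\ref{prop:unitary-normal-form} at \(n=1\) (with \((c,0)=v(c)\)), not from \eqref{eq:one-vertex-per-horosphere}, which only concerns heights \(n\ge1\).
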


\begin{proof}
Conjugation by \(U_C\) turns \(A_{\mathcal L}\) into \(H_C\), while the diagonal matrix \(B(\mu)\) is unchanged.  By \eqref{eq:attachment-C},
\[
\begin{aligned}
 2a\,\widetilde H_{\mathrm{res}}(\mu)
 &=H_C-a(\mu+\mu^{-1})I+a\mu^{-1}CC^*\\
 &=H_C-x(\mu)I+\frac{\mu^{-1}}aVV^*
 =F(\mu).
\end{aligned}
\]
Substitution into \eqref{eq:S-from-F} gives
\eqref{eq:S-from-resonance-matrix}.  Formula
\eqref{eq:detS-from-resonance-matrix} follows from Theorem
\ref{thm:determinant-identity}, because similarity and the scalar factor
\(2a\) cancel in the determinant quotient.  Equation
\eqref{eq:automorphic-scattering-from-resonance-matrix} is
\eqref{eq:automorphic-scattering-dictionary} applied to
\eqref{eq:S-from-resonance-matrix}, and
\eqref{eq:resonance-divisor-comparison} is Corollary
\ref{cor:divisor-comparison} under
\eqref{eq:F-equals-resonance-matrix}.
\end{proof}

\begin{remark}[Prior input and present output]\label{rem:prior-input}
The meromorphic continuation of the resolvent, the characterization of
outgoing resonant states, the finite resonance matrix, and the explicit
resonance computations belong to \cite{ArendsPetersonWeich}; finite-tail
scattering and reciprocal determinant formulas are also classical
\cite{VarbanovBrun,Chekhov}.  Here these inputs are converted, with exact
normalization, into the labelled automorphic coefficient matrix, the unitary
multi-cusp transform, the matrix Maass--Selberg term, and the separation of
core-supported factors.  No independent priority is claimed for the
reciprocal quotient itself.
\end{remark}

\begin{remark}[Automorphic interpretation]\label{rem:automorphic-interpretation}
The \(r\) columns of \(\mathcal E_s\) are normalized by their incoming
constant terms, while \(E_s^{\mathrm{hol}}\) uses Efrat's holomorphic local
factors.  Equations \eqref{eq:automorphic-scattering-dictionary} and
\eqref{eq:Phi-S-dictionary} are the exact changes of basis from the standard
Jacobi waves to these two automorphic conventions.  In the quotient-graph
Hilbert space, eigenfunctions supported in the finite core vanish on every
cusp.  Their spherical constant terms satisfy the second-order adjacency
recurrence in the height variable.  Since those constant terms vanish for all
sufficiently large heights, the recurrence forces them to vanish identically;
hence, in the spherical automorphic realization, these eigenfunctions belong
to the cuspidal subspace, namely the subspace with zero constant term at every
cusp.  Residual \(L^2\)-eigenfunctions, obtained from residues at poles, may
instead arise from the meromorphically continued Eisenstein family.
\end{remark}

\section{Arithmetic applications}\label{sec:examples}

\subsection{The Nagao quotient}\label{subsec:nagao-calibration}

For \(\Gamma=\mathrm{PGL}_2(\Fq[t])\), the quotient is a ray.  It is useful
to distinguish the convenient ray normalization
\[
 \lambda_{\mathrm{ray}}(0)=1,
 \qquad
 \lambda_{\mathrm{ray}}(n)=(q+1)q^{-n}\quad(n\ge1)
\]
from the Haar normalization fixed in Setup~\ref{setup:quotient-adjacency}.
Since the initial projective stabilizer has order \(q(q^2-1)\),
\begin{equation}\label{eq:nagao-haar-ray-scaling}
 \lambda_{\mathrm{Haar}}
 =\frac1{q(q^2-1)}\lambda_{\mathrm{ray}},
 \qquad
 W_{\mathrm{ray}}=(1),
 \qquad
 W_{\mathrm{Haar}}=\left(\frac1{q(q^2-1)}\right).
\end{equation}
In the standard Jacobi model one has
\[
        d=r=1,\qquad H_C=0,\qquad V=a\sqrt{q+1}.
\]
Then
\begin{equation}\label{eq:Nagao-F}
        F(\zeta)=a(q\zeta^{-1}-\zeta),
\end{equation}
and Theorem \ref{thm:exact-scattering} gives
\begin{equation}\label{eq:Nagao-S}
        S(\zeta)
        =-\frac{q\zeta-\zeta^{-1}}{q\zeta^{-1}-\zeta}.
\end{equation}
Using \eqref{eq:Phi-S-dictionary} gives
\[
        \Phi(s)=1.
\]
Thus the nontrivial reflection coefficient in the standard Jacobi basis,
\eqref{eq:Nagao-S}, becomes the scalar coefficient \(1\) in Efrat's
holomorphic ray basis.  In the constant-term-normalized basis, however,
\[
        \mathcal S_{\mathrm{aut}}(s)
        =S(\zeta)
        =\frac{c_{\mathrm{out}}(\zeta)}{c_{\mathrm{in}}(\zeta)}.
\]
This distinction is exactly the local normalization factor separated in Theorem \ref{thm:normalization-dictionary}.

The zeros \(\zeta=\pm\sqrt q\) of \eqref{eq:Nagao-F} correspond to the two
simple eigenvalues \(\pm(q+1)\).  Put
\[
 D(\theta)=(q-1)^2+4q\sin^2\theta.
\]
Theorem~\ref{thm:automorphic-plancherel} gives, in the ray normalization, the
Efrat-holomorphic density
\[
        \frac{q}{2\pi(q+1)D(\theta)}\,d\theta,
\]
whereas the constant-term-normalized density is
\([2\pi(q+1)]^{-1}d\theta\).  In the actual quotient Haar measure, the two
densities are respectively
\begin{equation}\label{eq:nagao-two-haar-densities}
 \frac{q^2(q-1)}{2\pi D(\theta)}\,d\theta,
 \qquad
 \frac{q(q-1)}{2\pi}\,d\theta,
\end{equation}
respectively.  Thus the ray and Haar versions differ only by the global
Hilbert-space scale \(q(q^2-1)\), while the spectral parameter and the two
coefficient normalizations are unchanged.

We finally record the convention-sensitive prefactor in
\cite[Thm.~5.3, Eq.~(7)]{EfratAutomorphicSpectra}.  Let \(h_\theta\) be the
ray generalized eigenfunction normalized by
\(h_\theta(0)=i(q+1)\sin\theta\).  In Efrat's measure
\(\mu_{\mathrm E}(0)=1/(q+1)\), the \((0,0)\)-entry of the identity kernel is
\(q+1\).  The two exceptional eigenprojections contribute
\((q^2-1)/q\), and
\[
 \int_0^\pi\frac{\sin^2\theta}{D(\theta)}\,d\theta
 =\frac{\pi}{2q(q+1)},
 \qquad
 |h_\theta(0)|^2=(q+1)^2\sin^2\theta.
\]
The missing continuous contribution is therefore obtained with coefficient
\(2/\pi\), not \(2\pi\); the factor \(D(\theta)\) remains in the denominator.
This single kernel entry suffices to fix the prefactor, while the full
decomposition follows from the general unitary transform above.

\subsection{The two-cusp Hecke quotient \(\Gamma_0(T)\)}\label{subsec:gamma0T}

Let \(A_0=\Fq[T]\), let
\(K_\infty=\mathbb F_q(\!(T^{-1})\!)\), and let
\[
 \widetilde\Gamma_0(T)=
 \left\{
 \begin{pmatrix}\alpha&\beta\\\gamma&\delta\end{pmatrix}
 \in\mathrm{GL}_2(A_0):\gamma\in TA_0\right\},
 \qquad
 \Gamma_0(T)=\widetilde\Gamma_0(T)/\Fq^\times
 \subset\mathrm{PGL}_2(K_\infty).
\]
Passing to the projective quotient does not change the tree action, since scalar matrices act trivially.
Bravo proves that the underlying quotient graph
\(\Gamma_0(T)\backslash\mathcal T\) is a double ray
\cite[Prop.~8.5]{Bravo}; its two ends are the cusps represented by \(0\) and
\(\infty\).  The following direct calculation supplies the stabilizer data
that the unweighted double ray does not record.  Put
\(\mathcal O_\infty=\Fq[\![T^{-1}]\!]\),
\(\mathcal K_\infty=\mathrm{PGL}_2(\mathcal O_\infty)\), and
\[
 \omega_T=\begin{pmatrix}0&-1\\T&0\end{pmatrix}.
\]

\begin{lemma}[Central and first-cusp stabilizers for \(\Gamma_0(T)\)]
\label{lem:gamma0-stabilizers}
Let \(v_0=\mathcal K_\infty\) and
\(v_1=\omega_T\mathcal K_\infty\).  These are adjacent lifts of the two
central quotient vertices.  Their projective stabilizers are opposite Borel
subgroups \(B_+,B_-<\mathrm{PGL}_2(\Fq)\), and the stabilizer of the edge
between them is their common diagonal torus \(\mathbb T\).  Consequently,
both directed indices across the central quotient edge are
\[
 [B_\pm:\mathbb T]=q.
\]
The other edge orbit at either central vertex is the outward cusp edge and
has directed index one from the central vertex.  The stabilizer of the first
vertex beyond that edge has order \(q^2(q-1)\) in \(\Gamma_0(T)\); the reverse
directed index is \(q\).
\end{lemma}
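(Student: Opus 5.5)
The proof will be a direct matrix computation inside \(\mathrm{PGL}_2(K_\infty)\), using that \(\mathcal K_\infty\) is the stabilizer of \(v_0\) in \(\mathrm{PGL}_2(K_\infty)\) and \(\omega_T\mathcal K_\infty\omega_T^{-1}\) is the stabilizer of \(v_1\).

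\textbf{Stabilizers of \(v_0\) and \(v_1\).} For \(v_0\), the stabilizer is \(\Gamma_0(T)\cap\mathcal K_\infty\). A matrix in \(\mathrm{GL}_2(\Fq[T])\) with entries in \(\mathcal O_\infty\), up to scalars, has constant entries, so this intersection is the projective image of \(\mathrm{GL}_2(\Fq)\) with \(\gamma\in T\Fq[T]\cap\Fq=\{0\}\). That is the upper Borel \(B_+\). For \(v_1\), I compute
\[
\omega_T^{-1}\begin{pmatrix}\alpha&\beta\\ \gamma&\delta\end{pmatrix}\omega_T
=\begin{pmatrix}\delta&-\gamma/T\\ -T\beta&\alpha\end{pmatrix}.
\]
With \(\gamma=T\gamma'\), integrality at \(\infty\) (after the scalar normalization permitted by a unit determinant in \(\Fq^\times\)) forces \(\alpha,\delta,\gamma'\in\Fq\) and \(\beta=0\). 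This gives the lower Borel \(B_-\). Their intersection is the diagonal torus \(\mathbb T\) of order \(q-1\). Adjacency holds because \(\omega_T\mathcal K_\infty=\diag(T,1)\cdot w\mathcal K_\infty=\diag(T,1)\mathcal K_\infty\), which is at distance one from \(v_0\). The edge stabilizer is \(B_+\cap B_-=\mathbb T\), so both directed indices equal \(q(q-1)/(q-1)=q\).

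\textbf{Outward edge and first cusp vertex.} The \(q+1\) neighbours of \(v_0\) correspond to \(\mathbb P^1(\Fq)\), and \(B_+\) acts on them with two orbits: a fixed point and an orbit of size \(q\). The size-\(q\) orbit contains \(v_1\), consistent with index \(q\). The fixed point is the outward cusp edge, with index \(1\), toward the cusp \(\infty\), namely the vertex \(\diag(T^{-1},1)\mathcal K_\infty\) or its mirror. The symmetric statement at \(v_1\) follows by conjugating with the Atkin--Lehner element \(\omega_T\), which normalizes \(\Gamma_0(T)\) and swaps \(v_0\) and \(v_1\). For the first cusp vertex \(v_2\), I will compute the stabilizer directly: it consists of \(\begin{pmatrix}\alpha&\beta\\0&\delta\end{pmatrix}\) with \(\deg\beta\le1\) and \(\alpha,\delta\in\Fq^\times\), modulo scalars. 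Its order is \(q^2(q^2-1)/\ldots\); more precisely, \((q-1)^2q^2/(q-1)=q^2(q-1)\). The lower-left entry must vanish because it lies in \(T\Fq[T]\) but must have degree \(\le -1\) relative to the scaling. The reverse index is then \(|\Gamma_{v_2}|/|\Gamma_{\text{edge}}|=q^2(q-1)/(q(q-1))=q\), since the edge stabilizer is \(B_+\) itself.

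\textbf{Main obstacle.} The main difficulty is the bookkeeping of which neighbour is \(\diag(T^{\pm1},1)\mathcal K_\infty\) under the \(T^{-1}\)-adic convention, and checking the degree bound on \(\beta\) and the vanishing of \(\gamma\) at \(v_2\). I will verify these by conjugating by \(\diag(T,1)\) and imposing \(\mathcal O_\infty\)-integrality entrywise, taking care with the projective scalar freedom.
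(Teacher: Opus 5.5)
Your plan follows the paper's argument. You get \(\Gamma_0(T)\cap\mathcal K_\infty\) by integrality at \(\infty\), use \(\omega_T^{-1}\gamma\,\omega_T\) for \(v_1\) and for the second cusp, and use integrality of a \(\diag(T,1)\)-conjugate for the first cusp vertex. Your count of \(B_+\)-orbits on the link \(\mathbb P^1(\Fq)\) (one fixed point, one orbit of size \(q\)) is a small improvement. It shows directly that there are exactly two edge orbits at \(v_0\), with indices \(q\) and \(1\), which the paper leaves to the degree count. However, the coset bookkeeping you flagged as the main obstacle is wrong as written, and followed literally it makes the last step fail.

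The claim \(\omega_T\mathcal K_\infty=\diag(T,1)\mathcal K_\infty\) is false. The matrix \(\diag(T,1)^{-1}\omega_T=\begin{pmatrix}0&-T^{-1}\\T&0\end{pmatrix}\) has unit determinant but the non-integral entry \(T\), so it is not in \(\mathcal K_\infty\). The correct factorizations are \(\omega_T=w\,\diag(T,1)=\diag(1,T)\,w\) with \(w=\begin{pmatrix}0&-1\\1&0\end{pmatrix}\). Hence \(v_1=\diag(1,T)\mathcal K_\infty=\diag(T^{-1},1)\mathcal K_\infty\) in \(\mathrm{PGL}_2\); adjacency to \(v_0\) still holds.

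Consequently, the vertex you name as the outward one, \(\diag(T^{-1},1)\mathcal K_\infty\), is \(v_1\) itself. For \(\gamma=\begin{pmatrix}\alpha&\beta\\Tc&\delta\end{pmatrix}\) one has \(\diag(T,1)\,\gamma\,\diag(T,1)^{-1}=\begin{pmatrix}\alpha&T\beta\\c&\delta\end{pmatrix}\). Integrality forces \(\beta=0\) and returns your \(B_-\), of order \(q(q-1)\), not \(q^2(q-1)\).

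The \(B_+\)-fixed neighbour of \(v_0\) is \(\diag(T,1)\mathcal K_\infty\). Indeed, \(\diag(T,1)^{-1}b\,\diag(T,1)=\begin{pmatrix}\alpha&\beta/T\\0&\delta\end{pmatrix}\in\mathcal K_\infty\) for every \(b=\begin{pmatrix}\alpha&\beta\\0&\delta\end{pmatrix}\) with constant entries. This is the first vertex toward the cusp \(\infty\). For it, integrality of \(\diag(T,1)^{-1}\gamma\,\diag(T,1)=\begin{pmatrix}\alpha&\beta/T\\T^2c&\delta\end{pmatrix}\) gives \(c=0\), \(\alpha,\delta\in\Fq^\times\), and \(\deg\beta\le1\). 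That is exactly the stabilizer you wrote down and the paper's computation. Once \(v_1\) and the outward vertex are labelled correctly, your argument is complete and agrees with the paper's.
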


\begin{proof}
An element of \(\widetilde\Gamma_0(T)\) has the form
\[
 \gamma=\begin{pmatrix}\alpha&\beta\\Tc&\delta\end{pmatrix},
 \qquad \alpha,\beta,c,\delta\in\Fq[T],
 \qquad \det\gamma\in\Fq^\times.
\]
Membership in the stabilizer of \(v_0\) forces every entry to be integral at
infinity.  Hence \(\alpha,\beta,\delta\) are constant and \(c=0\), which gives
\(B_+\) after quotienting by scalar matrices.  Directly,
\[
 \omega_T^{-1}\gamma\omega_T
 =\begin{pmatrix}\delta&-c\\-T\beta&\alpha\end{pmatrix},
\]
so \(\omega_T\) normalizes \(\Gamma_0(T)\) projectively and sends \(v_0\) to
\(v_1\), giving the opposite Borel \(B_-\).  The intersection stabilizing the
common edge is the diagonal torus, of order \(q-1\); since
\(|B_\pm|=q(q-1)\), the central indices are \(q\).

For the outward vertex \(v_\infty=\operatorname{diag}(T,1)\mathcal K_\infty\),
the condition
\[
 \operatorname{diag}(T^{-1},1)\gamma
 \operatorname{diag}(T,1)
 =\begin{pmatrix}\alpha&\beta/T\\T^2c&\delta\end{pmatrix}
 \in\mathrm{PGL}_2(\mathcal O_\infty)
\]
is equivalent to \(c=0\), \(\alpha,\delta\in\Fq^\times\), and
\(\deg\beta\le1\).  There are therefore \(q^2(q-1)\) projective stabilizer
elements.  The intervening edge stabilizer is \(B_+\), of order \(q(q-1)\),
which gives the two stated directed indices.  Applying \(\omega_T\) gives the
same computation at the other cusp.
\end{proof}

Figure~\ref{fig:gamma0-double-ray} displays both the directed quotient data
and the symmetric Jacobi couplings obtained after stabilizer conjugation.

\begin{figure}[htbp]
\centering
\begin{tikzpicture}[x=1.45cm,y=1cm,>=Latex,
 vertex/.style={circle,draw,fill=white,minimum size=6mm,inner sep=0pt},
 edge/.style={semithick},
 dottededge/.style={densely dotted,semithick},
 dir/.style={-{Latex[length=1.8mm]},thin},
 lab/.style={font=\scriptsize,fill=white,inner sep=.8pt}
]
\node[font=\small,anchor=east,align=right] at (-.75,.65)
 {weighted\\quotient};
\node[vertex] (cinf) at (0,.65) {};
\node[vertex] (v0) at (1.35,.65) {};
\node[vertex] (v1) at (2.70,.65) {};
\node[vertex] (c0) at (4.05,.65) {};
\node[below=4pt of cinf,font=\scriptsize] {\((\infty,1)\)};
\node[below=4pt of v0,font=\scriptsize] {\(v_0\)};
\node[below=4pt of v1,font=\scriptsize] {\(v_1\)};
\node[below=4pt of c0,font=\scriptsize] {\((0,1)\)};
\draw[dottededge] (cinf) -- +(-.72,0);
\draw[edge] (cinf)--(v0)--(v1)--(c0);
\draw[dottededge] (c0) -- +(.72,0);

\draw[dir] ($(cinf)+(0,.17)$) to[bend left=27]
 node[lab,above] {\(q\)} ($(v0)+(0,.17)$);
\draw[dir] ($(v0)+(0,-.17)$) to[bend left=27]
 node[lab,below] {\(1\)} ($(cinf)+(0,-.17)$);
\draw[dir] ($(v0)+(0,.17)$) to[bend left=27]
 node[lab,above] {\(q\)} ($(v1)+(0,.17)$);
\draw[dir] ($(v1)+(0,-.17)$) to[bend left=27]
 node[lab,below] {\(q\)} ($(v0)+(0,-.17)$);
\draw[dir] ($(v1)+(0,.17)$) to[bend left=27]
 node[lab,above] {\(1\)} ($(c0)+(0,.17)$);
\draw[dir] ($(c0)+(0,-.17)$) to[bend left=27]
 node[lab,below] {\(q\)} ($(v1)+(0,-.17)$);

\draw[{Latex[length=1.8mm]}-{Latex[length=1.8mm]},thin]
 ($(cinf)+(0,1.0)$) -- node[above,font=\scriptsize]
 {Atkin--Lehner involution \(\omega_T\)} ($(c0)+(0,1.0)$);

\node[font=\small,anchor=east,align=right] at (-.75,-1.85)
 {after\\\(U{\,=\,}\lambda^{1/2}\)};
\node[vertex] (jcinf) at (0,-1.85) {};
\node[vertex] (jv0) at (1.35,-1.85) {};
\node[vertex] (jv1) at (2.70,-1.85) {};
\node[vertex] (jc0) at (4.05,-1.85) {};
\node[below=4pt of jcinf,font=\scriptsize] {\((\infty,1)\)};
\node[below=4pt of jv0,font=\scriptsize] {\(v_0\)};
\node[below=4pt of jv1,font=\scriptsize] {\(v_1\)};
\node[below=4pt of jc0,font=\scriptsize] {\((0,1)\)};
\draw[dottededge] (jcinf) -- +(-.72,0);
\draw[edge] (jcinf) -- node[lab,above] {\(\sqrt q\)} (jv0);
\draw[edge] (jv0) -- node[lab,above] {\(q\)} (jv1);
\draw[edge] (jv1) -- node[lab,above] {\(\sqrt q\)} (jc0);
\draw[dottededge] (jc0) -- +(.72,0);
\node[draw,dashed,rounded corners=2pt,fit=(jv0)(jv1),inner sep=5pt,
 label={[font=\scriptsize]below:finite core}] {};

\node[font=\small] at (2.03,-3.0)
 {\(w_\infty=w_0=\dfrac{1}{q(q^2-1)}\)};
\end{tikzpicture}
\caption{The two-cusp quotient for \(\Gamma_0(T)\).  The upper row shows
the two directed indices on each edge adjacent to the central vertices.  The
lower row shows the standard-\(\ell^2\) Jacobi realization: the central
coupling is \(q\), both cusp attachments are \(\sqrt q\), and the two cusp
widths agree.}
\label{fig:gamma0-double-ray}
\end{figure}

The central directed indices and the fixed outward edges give local degree
\(q+1\) at both central vertices.  Absorb \(v_0,v_1\) into the finite core and
start each free channel at the next vertex.  In the standard \(\ell^2\)
normalization, Lemma~\ref{lem:gamma0-stabilizers} gives the following core and
labelled attachment data:
\begin{equation}\label{eq:gamma0-core}
 H_C=\begin{pmatrix}0&q\\q&0\end{pmatrix},
 \qquad C_{\mathrm{att}}=I_2,
 \qquad V=\sqrt q\,I_2.
\end{equation}
For the stabilizer-volume measure \(\lambda(v)=|\Gamma_{0}(T)_v|^{-1}\), the two free tails have the same width,
\begin{equation}\label{eq:gamma0-width}
        W=wI_2,
        \qquad w=\frac{1}{q(q^2-1)}.
\end{equation}
Indeed, Lemma~\ref{lem:gamma0-stabilizers} gives projective stabilizer order
\(q^2(q-1)\) at the first free vertex on either side, and
\(w(q+1)q^{-1}=1/[q^2(q-1)]\).

\begin{proposition}[Exact \(\Gamma_0(T)\) scattering matrix]\label{prop:gamma0-scattering}
Put \(a=\sqrt q\) and use \(\mu=\zeta=q^{s-1/2}\).  Then
\begin{align}
 F_{\Gamma_0(T)}(\mu)
 &=\begin{pmatrix}-a\mu&q\\q&-a\mu\end{pmatrix},
 &\det F_{\Gamma_0(T)}(\mu)&=q(\mu^2-q),
 \label{eq:gamma0-F}\\
 S_{\Gamma_0(T)}(\mu)
 &=\frac1{\mu^2-q}
 \begin{pmatrix}
 q-1&\displaystyle\frac{a(\mu^2-1)}{\mu}\\[4pt]
 \displaystyle\frac{a(\mu^2-1)}{\mu}&q-1
 \end{pmatrix},
 \label{eq:gamma0-S}\\
 \det S_{\Gamma_0(T)}(\mu)
 &=\frac{1-q\mu^2}{\mu^2(\mu^2-q)}.
 \label{eq:gamma0-detS}
\end{align}
Since \(W\) is scalar, this is simultaneously the Jacobi scattering matrix and
the classical automorphic constant-term matrix.
\end{proposition}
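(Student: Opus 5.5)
The plan is a direct substitution of the data \eqref{eq:gamma0-core} into the general finite-core formulas of Section~\ref{sec:finite-core}. The core matrix \(H_C\), the attachment \(V=\sqrt q\,I_2\) and the width \eqref{eq:gamma0-width} have already been extracted from Lemma~\ref{lem:gamma0-stabilizers} via Proposition~\ref{prop:unitary-normal-form}. The symmetric couplings are the geometric means of the directed indices: \(\sqrt{q\cdot q}=q\) across the central edge and \(\sqrt{1\cdot q}=\sqrt q\) on each cusp edge. I would therefore take these data as given and only compute.

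First I compute \(F\) from \eqref{eq:F-zeta}. Since \(VV^*=qI_2\), the attachment term is \(\frac{\mu^{-1}}{a}qI_2=a\mu^{-1}I_2\). This cancels the \(-a\mu^{-1}\) part of \(-x(\mu)=-a(\mu+\mu^{-1})\), leaving diagonal entries \(-a\mu\) and off-diagonal entries \(q\). Then \(\det F=a^2\mu^2-q^2=q(\mu^2-q)\), which is \eqref{eq:gamma0-F}. The \(2\times2\) inverse is
\[
F^{-1}=\frac{1}{q(\mu^2-q)}\begin{pmatrix}-a\mu&-q\\-q&-a\mu\end{pmatrix}.
\]

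Next I apply \eqref{eq:S-from-F}. Using \(V^*F^{-1}V=qF^{-1}\), it becomes \(S=-I_2+a(\mu^{-1}-\mu)F^{-1}\). The diagonal entry is
\[
-1+\frac{\mu^2-1}{\mu^2-q}=\frac{q-1}{\mu^2-q},
\]
and the off-diagonal entry is \(a(\mu^2-1)/(\mu(\mu^2-q))\), giving \eqref{eq:gamma0-S}. For the determinant \eqref{eq:gamma0-detS}, I would not expand the \(2\times2\) determinant. Instead I use Theorem~\ref{thm:determinant-identity} with \(r=2\):
\[
\det S=\frac{q(\mu^{-2}-q)}{q(\mu^2-q)}=\frac{1-q\mu^2}{\mu^2(\mu^2-q)}.
\]
A direct expansion of the matrix determinant would serve as a consistency check.

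Finally, since \(W=wI_2\) is scalar, \(W^{-1/2}SW^{1/2}=S\). Proposition~\ref{prop:classical-Eisenstein-identification}, through \eqref{eq:classical-M-equals-scattering}, then identifies \(S\) with the classical constant-term matrix \(M(s)=\mathcal S_{\mathrm{aut}}(s)\). All identities are first valid where \(\det F(\mu)\ne0\) and then hold as rational identities in \(\mu\). The only real obstacle is upstream: making sure the labelled data \eqref{eq:gamma0-core} correctly encode the stabilizer indices, including that both free channels start at the vertices of order \(q^2(q-1)\). Once that is granted, the computation is routine bookkeeping.
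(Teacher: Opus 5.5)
Your proposal is correct and follows the paper's own proof: substitute \(H_C\) and \(V=\sqrt q\,I_2\) into \(F\), observe the cancellation of the \(a\mu^{-1}\) terms, and invert the \(2\times2\) matrix in \eqref{eq:S-from-F}. The one difference is that you obtain \(\det S\) from the reciprocal identity \eqref{eq:determinant-identity} and keep direct expansion as a check, while the paper computes it from the matrix and cites the reciprocal quotient as the check; both computations agree.
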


\begin{proof}
The term \(\mu^{-1}VV^*/a\) in \(F\) cancels the
\(-a\mu^{-1}I_2\) part of \(-x(\mu)I_2\), so
\eqref{eq:gamma0-core} gives \eqref{eq:gamma0-F}.  Inverting this
\(2\times2\) matrix in \eqref{eq:S-from-F} gives \eqref{eq:gamma0-S}; its
determinant is \eqref{eq:gamma0-detS}, in agreement with the reciprocal
quotient \eqref{eq:determinant-identity}.
\end{proof}

The Atkin--Lehner element \(\omega_T\) interchanges the two cusps.
Accordingly, the symmetric and antisymmetric channel vectors diagonalize
\eqref{eq:gamma0-S}, with eigenvalues
\begin{equation}\label{eq:gamma0-AL-channels}
 \sigma_+(\mu)=\frac{a\mu-1}{\mu(\mu-a)},
 \qquad
 \sigma_-(\mu)=-\frac{a\mu+1}{\mu(\mu+a)}.
\end{equation}
The poles \(\mu=\pm\sqrt q\) give the two exceptional \(L^2\)-eigenvalues \(\pm(q+1)\), while the reciprocal points \(\mu=\pm q^{-1/2}\) are zeros of the scattering determinant.  On \(|\mu|=1\), the reflection and transmission amplitudes are
\[
 \mathcal R(\mu)=\frac{q-1}{\mu^2-q},
 \qquad
 \tau(\mu)=\frac{\sqrt q(\mu^2-1)}{\mu(\mu^2-q)},
\]
and Corollary~\ref{cor:S-functional-unitary} gives the two-channel unitarity
relations.  The nonzero transmission coefficient is forced by the central
Borel--torus indices, so this arithmetic quotient realizes genuine
two-channel scattering with absolutely continuous multiplicity two.

\begin{corollary}[Channel-resolved Maass--Selberg term for \(\Gamma_0(T)\)]
\label{cor:gamma0-channel-mass}
Let
\[
 e_+=\frac{(1,1)^t}{\sqrt2},\qquad
 e_-=\frac{(1,-1)^t}{\sqrt2},
\]
and put
\(\mathcal M_{\Gamma_0}(\theta)
=iS_{\Gamma_0(T)}(e^{i\theta})^*
\partial_\theta S_{\Gamma_0(T)}(e^{i\theta})\).
Then
\begin{equation}\label{eq:gamma0-channel-mass-eigenvalues}
 \mathcal M_{\Gamma_0}(\theta)e_\pm=m_\pm(\theta)e_\pm,
\end{equation}
where
\begin{equation}\label{eq:gamma0-channel-mass-scalars}
 m_+(\theta)=\frac{2(1-\sqrt q\cos\theta)}
                    {q+1-2\sqrt q\cos\theta},
 \qquad
 m_-(\theta)=\frac{2(1+\sqrt q\cos\theta)}
                    {q+1+2\sqrt q\cos\theta}.
\end{equation}
In particular,
\begin{equation}\label{eq:gamma0-traceless-mass}
 m_+(\theta)-m_-(\theta)
 =-\frac{4\sqrt q\,(q-1)\cos\theta}
          {(q+1)^2-4q\cos^2\theta}.
\end{equation}
The logarithmic derivative of \(\det S_{\Gamma_0(T)}\) supplies
\(m_++m_-\), whereas the scalar trace identity alone does not supply the
channel difference \eqref{eq:gamma0-traceless-mass}.
\end{corollary}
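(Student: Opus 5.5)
The plan is to diagonalize \(S_{\Gamma_0(T)}\) simultaneously in the Atkin--Lehner basis and then reduce \(\mathcal M_{\Gamma_0}\) to two scalar phase derivatives.

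\emph{Step 1 (simultaneous diagonalization).} By \eqref{eq:gamma0-S}, \(S_{\Gamma_0(T)}(\mu)\) is a symmetric \(2\times2\) matrix with equal diagonal entries. Hence it commutes with the swap \(\left(\begin{smallmatrix}0&1\\1&0\end{smallmatrix}\right)\) for every \(\mu\). Therefore \(e_\pm\) are eigenvectors of \(S_{\Gamma_0(T)}(\mu)\) for all \(\mu\), with the eigenvalues \(\sigma_\pm(\mu)\) of \eqref{eq:gamma0-AL-channels}. Because \(e_\pm\) do not depend on \(\theta\), the same vectors are eigenvectors of \(S_\theta^*\) and of \(\dot S_\theta\). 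This gives
\[
 \mathcal M_{\Gamma_0}(\theta)e_\pm=i\,\overline{\sigma_\pm}\,\dot\sigma_\pm\,e_\pm,
 \qquad \sigma_\pm=\sigma_\pm(e^{i\theta}).
\]
By Corollary~\ref{cor:S-functional-unitary}, \(|\sigma_\pm|=1\) on the open arc. Writing \(\sigma_\pm=e^{i\varphi_\pm}\) locally, we get \(m_\pm=i\overline{\sigma_\pm}\dot\sigma_\pm=-\dot\varphi_\pm\).

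\emph{Step 2 (the phase computation for \(\sigma_+\)).} Put \(\zeta=e^{i\theta}\) and \(a=\sqrt q\). Then
\[
 a\zeta-1=\zeta(a-\bar\zeta)=\zeta\,\overline{(a-\zeta)},
 \qquad\text{so}\qquad
 \sigma_+=-\frac{\overline{(a-\zeta)}}{a-\zeta}.
\]
Consequently \(\varphi_+=\pi-2\arg(a-e^{i\theta})\), and \(m_+=2\,\partial_\theta\arg(a-e^{i\theta})=2\,\operatorname{Im}\partial_\theta\log(a-e^{i\theta})\). Next,
\[
 \partial_\theta\log(a-\zeta)=\frac{-i\zeta}{a-\zeta},
 \qquad
 \operatorname{Im}\frac{-i\zeta}{a-\zeta}
 =-\operatorname{Re}\frac{\zeta(a-\bar\zeta)}{|a-\zeta|^2}
 =\frac{1-a\cos\theta}{q+1-2a\cos\theta}.
\]
Doubling gives \(m_+\) in \eqref{eq:gamma0-channel-mass-scalars}.

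\emph{Step 3 (the antisymmetric channel).} The formula for \(\sigma_-\) is obtained from \(\sigma_+\) by the substitution \(a\mapsto-a\). The same computation therefore gives \(m_-\) with \(\cos\theta\) replaced by \(-\cos\theta\). Subtracting the two expressions over the common denominator
\[
 (q+1)^2-4q\cos^2\theta=(q+1-2a\cos\theta)(q+1+2a\cos\theta)
\]
yields \eqref{eq:gamma0-traceless-mass}; this is elementary algebra.

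\emph{Step 4 (trace versus channel difference).} By \eqref{eq:trace-mass-determinant}, \(m_++m_-=\operatorname{tr}\mathcal M_{\Gamma_0}=i\,\partial_\theta\log\det S_{\Gamma_0(T)}\). Since \(r=2\), the traceless part \(\mathcal M_0\) has eigenvalues \(\pm(m_+-m_-)/2\). That quantity is not identically zero, so it carries information beyond the trace, as asserted in Corollary~\ref{cor:maass-selberg-traceless}.

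The main obstacle is Step 2, specifically the sign conventions. One must keep the orientation \(\dot S=\partial_\theta\) and the factor \(i\) straight, so that \(m_\pm=-\dot\varphi_\pm\) rather than \(+\dot\varphi_\pm\). I would check the sign against the sanity value \(m_+(\theta)\to 2(1-\sqrt q)/(1-\sqrt q)^2\) as \(\theta\to0\), together with positivity of the Gram expansion \eqref{eq:matrix-maass-selberg} for large \(N\).
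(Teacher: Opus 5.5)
Your proposal is correct and follows the paper's proof. Both diagonalize \(S_{\Gamma_0(T)}\) in the \(\theta\)-independent Atkin--Lehner basis \(e_\pm\), reduce \(\mathcal M_{\Gamma_0}\) to the scalars \(i\overline{\sigma_\pm}\,\partial_\theta\sigma_\pm\), evaluate them, subtract, and invoke \eqref{eq:trace-mass-determinant}. Your signs check out, and writing \(\sigma_+=-\overline{(a-\zeta)}/(a-\zeta)\) is simply a tidy way of evaluating the paper's \(-\mu\,\partial_\mu\log\sigma_\pm\). One caveat: the large-\(N\) positivity of the truncated Gram matrix cannot detect a sign error in \(\mathcal M\), so that proposed check is vacuous, but your direct computation already suffices.
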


\begin{proof}
The constant Atkin--Lehner basis \((e_+,e_-)\) diagonalizes the scattering
matrix with eigenvalues \(\sigma_+,\sigma_-\) in
\eqref{eq:gamma0-AL-channels}.  On \(\mu=e^{i\theta}\),
\[
 i\overline{\sigma_\pm(\mu)}\,
 \partial_\theta\sigma_\pm(\mu)
 =-\mu\,\partial_\mu\log\sigma_\pm(\mu).
\]
Substitution of \eqref{eq:gamma0-AL-channels} gives
\eqref{eq:gamma0-channel-mass-scalars}, and subtraction gives
\eqref{eq:gamma0-traceless-mass}.  The last assertion follows from
\eqref{eq:trace-mass-determinant}.
\end{proof}

\subsection{A four-cusp elliptic quotient over \(\mathbb F_3\)}\label{subsec:F3-elliptic}

Let
\begin{equation}\label{eq:F3-curve}
        X:\ y^2=x^3+x+1
\end{equation}
be the elliptic curve over \(\mathbb F_3\), with point at infinity \(O\), and set
\[
 \mathcal A_X=H^0(X\setminus\{O\},\mathcal O_X)
   \cong\mathbb F_3[x,y]/(y^2-x^3-x-1),
 \qquad
 \Gamma=\mathrm{PGL}_2(\mathcal A_X).
\]
Here \(\mathrm{PGL}_2(\mathcal A_X)\) denotes the image of
\(\mathrm{GL}_2(\mathcal A_X)\) in
\(\mathrm{PGL}_2(\mathbb F_3(X))\), subsequently
embedded in the group over the completion at \(O\).
The notation \(\operatorname{Pic}(X)\) denotes the divisor class group of
\(X\) over \(\mathbb F_3\), and \(\operatorname{Pic}(\mathcal A_X)\) denotes
the ideal class group of the Dedekind domain \(\mathcal A_X\), equivalently
the divisor class group of the affine curve \(X\setminus\{O\}\).
The rational points are
\[
        X(\mathbb F_3)=\{O,(0,1),(0,2),(1,0)\}.
\]
Thus \(\#X(\mathbb F_3)=4\).  Since \((1,0)\) is the only nonzero rational two-torsion point, \(X(\mathbb F_3)\cong\mathbb Z/4\mathbb Z\).  Because \(O\) is rational, the degree splitting gives
\[
        \operatorname{Pic}(\mathcal A_X)
        \cong\operatorname{Pic}(X)/\langle[O]\rangle
        \cong\operatorname{Pic}^0(X)(\mathbb F_3)
        \cong X(\mathbb F_3)\cong\mathbb Z/4\mathbb Z.
\]
The cusps of
\(\Gamma\backslash\mathcal T\) are indexed by
\(\operatorname{Pic}(\mathcal A_X)\)
\cite[Sec.~3.4]{ArendsPetersonWeich}, so the quotient has exactly four cusps.
We order them geometrically as west, south, northeast, and southeast in the
stabilizer-labelled Takahashi domain reproduced in
\cite[Fig.~7.3]{ArendsPetersonWeich}; this order is denoted
\((c_1,c_2,c_3,c_4)\).

\subsubsection{The weighted core and projective stabilizer normalization}

Choose core vertices \(v_1,\ldots,v_9\) so that \(v_1,v_8,v_6\) are
respectively the west, south, and common eastern attachment vertices.
Conjugation by \(\lambda(v)^{1/2}=|\Gamma_v|^{-1/2}\) changes a core edge
\(e=vw\) into the symmetric Jacobi coupling
\begin{equation}\label{eq:stabilizer-to-jacobi}
        h_{vw}=\frac{\sqrt{|\Gamma_v||\Gamma_w|}}{|\Gamma_e|}.
\end{equation}
The labels in Takahashi's fundamental domain \cite{Takahashi}, reproduced in
\cite[Fig.~7.3]{ArendsPetersonWeich}, are orders of stabilizers for the
\(\mathrm{GL}_2(\mathcal A_X)\)-action.  Passing to
\(\Gamma=\mathrm{PGL}_2(\mathcal A_X)\) quotients every vertex and edge
stabilizer by the central unit group
\(\mathcal A_X^\times=\mathbb F_3^\times\), of order two; compare the
linear and projective stabilizers in \cite[Sec.~1]{Knudson}.  Thus the
projective stabilizer orders in our vertex numbering are
\begin{equation}\label{eq:F3-core-vertex-orders}
\begingroup
\setlength{\arraycolsep}{4.2pt}
\begin{array}{c|ccccccccc}
v&v_1&v_2&v_3&v_4&v_5&v_6&v_7&v_8&v_9\\ \hline
|\Gamma_v|&6&24&3&1&4&2&3&6&24
\end{array}
\endgroup
\end{equation}
The core-edge stabilizer orders are
\begin{equation}\label{eq:F3-core-edge-orders}
\begingroup
\setlength{\arraycolsep}{3.8pt}
\begin{array}{c|cccccccc}
e&v_1v_2&v_1v_3&v_3v_4&v_4v_5&v_4v_6&v_4v_7&v_7v_8&v_8v_9\\ \hline
|\Gamma_e|&6&3&1&1&1&1&3&6
\end{array}
\endgroup
\end{equation}
For the ordered cusps \((c_1,c_2,c_3,c_4)\), denote by \(e_{c_j}\) the edge
from the core to the first free vertex \((c_j,1)\).  The attachment-edge
orders and the stabilizer orders at the first free vertices are, respectively,
\begin{equation}\label{eq:F3-attachment-stabilizer-orders}
 \bigl(|\Gamma_{e_{c_1}}|,\ldots,|\Gamma_{e_{c_4}}|\bigr)
 =(6,6,2,2),
 \qquad
 \bigl(|\Gamma_{(c_1,1)}|,\ldots,|\Gamma_{(c_4,1)}|\bigr)
 =(18,18,6,6).
\end{equation}
The division by the common central factor leaves every stabilizer index, and
hence the weighted adjacency and the resonance matrix, unchanged.  It is
nevertheless essential for the absolute Haar volumes used in the Plancherel
formula.
For example,
\[
 h_{v_1v_2}=\frac{\sqrt{6\cdot24}}6=2,
 \qquad
 h_{v_1v_3}=\frac{\sqrt{6\cdot3}}3=\sqrt2,
 \qquad
 h_{v_3v_4}=\frac{\sqrt{3\cdot1}}1=\sqrt3.
\]
Applying \eqref{eq:stabilizer-to-jacobi} to every entry of
\eqref{eq:F3-core-edge-orders} gives
\begin{equation}\label{eq:F3-Hcore}
\begingroup
\setlength{\arraycolsep}{3.2pt}
H_C=
\begin{pmatrix}
0&2&\sqrt2&0&0&0&0&0&0\\
2&0&0&0&0&0&0&0&0\\
\sqrt2&0&0&\sqrt3&0&0&0&0&0\\
0&0&\sqrt3&0&2&\sqrt2&\sqrt3&0&0\\
0&0&0&2&0&0&0&0&0\\
0&0&0&\sqrt2&0&0&0&0&0\\
0&0&0&\sqrt3&0&0&0&\sqrt2&0\\
0&0&0&0&0&0&\sqrt2&0&2\\
0&0&0&0&0&0&0&2&0
\end{pmatrix}.
\endgroup
\end{equation}
The four labelled attachments are
\begin{equation}\label{eq:F3-attachment}
        C_{\mathrm{att}}
        =\begin{pmatrix}e_1&e_8&e_6&e_6\end{pmatrix},
        \qquad
        V=\sqrt3\,C_{\mathrm{att}}.
\end{equation}
Here \(e_j\) is the \(j\)-th standard coordinate vector of \(\CC^9\).
Indeed, \eqref{eq:F3-attachment-stabilizer-orders} gives the four attachment
couplings
\[
 \frac{\sqrt{6\cdot18}}6,\quad
 \frac{\sqrt{6\cdot18}}6,\quad
 \frac{\sqrt{2\cdot6}}2,\quad
 \frac{\sqrt{2\cdot6}}2,
\]
all equal to \(\sqrt3\).  Thus the two eastern cusps attach to the same core
vertex but remain distinct labelled channels.  Figure
\ref{fig:F3-standard-core} records the sparse Jacobi data derived from
\eqref{eq:F3-core-vertex-orders}--
\eqref{eq:F3-attachment-stabilizer-orders}.

\begin{figure}[htbp]
\centering
\begin{tikzpicture}[x=1.05cm,y=1.0cm,>=Latex,
 core/.style={circle,draw,fill=white,inner sep=1.8pt,font=\scriptsize},
 lab/.style={font=\scriptsize,fill=white,inner sep=1pt},
 channel/.style={dashed,->,thick}]
\node[core] (v1) at (0,0) {$v_1$};
\node[core] (v2) at (0,1.25) {$v_2$};
\node[core] (v3) at (1.55,0) {$v_3$};
\node[core] (v4) at (3.1,0) {$v_4$};
\node[core] (v5) at (3.1,1.25) {$v_5$};
\node[core] (v6) at (4.65,0) {$v_6$};
\node[core] (v7) at (3.1,-1.25) {$v_7$};
\node[core] (v8) at (3.1,-2.5) {$v_8$};
\node[core] (v9) at (1.55,-2.5) {$v_9$};
\draw (v1)--node[lab,left]{$2$}(v2);
\draw (v1)--node[lab,above]{$\sqrt2$}(v3);
\draw (v3)--node[lab,above]{$\sqrt3$}(v4);
\draw (v4)--node[lab,left]{$2$}(v5);
\draw (v4)--node[lab,above]{$\sqrt2$}(v6);
\draw (v4)--node[lab,left]{$\sqrt3$}(v7);
\draw (v7)--node[lab,left]{$\sqrt2$}(v8);
\draw (v8)--node[lab,above]{$2$}(v9);
\draw[channel] (v1)--node[lab,above]{$\sqrt3$}++(-1.35,0) node[left,lab]{$c_1$};
\draw[channel] (v8)--node[lab,right]{$\sqrt3$}++(0,-1.15) node[below,lab]{$c_2$};
\draw[channel] (v6)--node[lab,above left]{$\sqrt3$}++(1.15,0.85) node[right,lab]{$c_3$};
\draw[channel] (v6)--node[lab,below left]{$\sqrt3$}++(1.15,-0.85) node[right,lab]{$c_4$};
\end{tikzpicture}
\caption{The standard-\(\ell^2\) finite core for \eqref{eq:F3-curve}, redrawn
from the stabilizer-labelled Takahashi domain in
\cite[Fig.~7.3]{ArendsPetersonWeich}.  Solid-edge labels are the conjugated
couplings \eqref{eq:stabilizer-to-jacobi}; dashed edges are the four labelled
free cusp channels.}
\label{fig:F3-standard-core}
\end{figure}

With the first free vertices indexed by \(n=1\), equations
\eqref{eq:cusp-width-definition} and
\eqref{eq:F3-attachment-stabilizer-orders}, with \(q=3\), give
\begin{equation}\label{eq:F3-widths}
 \begin{aligned}
 (w_1,w_2,w_3,w_4)
 &=\frac34\left(\frac1{18},\frac1{18},\frac1{6},\frac1{6}\right),\\
 W&=\frac1{24}\operatorname{diag}(1,1,3,3).
 \end{aligned}
\end{equation}
This records the chosen cusp origins; a different cut transforms the answer by Remark~\ref{rem:height-shift}.

\subsubsection{Finite arithmetic input}

The following proposition records the finite matrix in the present
normalization.  Its resonance polynomial and the classification of its zeros
are results of Arends, Peterson, and Weich
\cite[Sec.~7.7]{ArendsPetersonWeich}; they are retained here so that the
coefficient-level and normalization-exact Plancherel formulas derived below
can be stated without a normalization gap.

\begin{proposition}[Finite resonance data recalled from previous work]
\label{prop:F3-resonance-input}
For the elliptic quotient \eqref{eq:F3-curve},
\begin{equation}\label{eq:F3-F}
 F_X(\mu)=H_C-\sqrt3(\mu+\mu^{-1})I_9
 +\sqrt3\mu^{-1}C_{\mathrm{att}}C_{\mathrm{att}}^*,
\end{equation}
and
\begin{equation}\label{eq:F3-detF}
 \det F_X(\mu)
 =-27\sqrt3\,\mu^{-5}
 (\mu^2+1)^2(\mu-1)^2(\mu+1)^2
 (\mu^2-3)(3\mu^4+1).
\end{equation}
\end{proposition}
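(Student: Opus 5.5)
The formula \eqref{eq:F3-F} is immediate from Definition~\ref{def:finite-outgoing-matrix}. With \(q=3\) we have \(a=\sqrt3\) and \(x(\mu)=\sqrt3(\mu+\mu^{-1})\). By \eqref{eq:F3-attachment}, \(V=\sqrt3\,C_{\mathrm{att}}\), so
\[
 \frac{\mu^{-1}}{a}VV^*=\frac{\mu^{-1}}{\sqrt3}\cdot3\,C_{\mathrm{att}}C_{\mathrm{att}}^*=\sqrt3\,\mu^{-1}C_{\mathrm{att}}C_{\mathrm{att}}^*.
\]
Here \(C_{\mathrm{att}}C_{\mathrm{att}}^*=\diag(1,0,0,0,0,2,0,1,0)\): the two eastern channels both land on \(v_6\), giving the entry \(2\). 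This agrees with Theorem~\ref{thm:finite-resonance-comparison}, which says \(F_X=2a\widetilde H_{\mathrm{res}}\). The real content is the determinant \eqref{eq:F3-detF}. I would compute it directly rather than quote it, and use the cited result of Arends--Peterson--Weich only as a cross-check.

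To make the determinant tractable, I would exploit the reflection symmetry of the weighted core in Figure~\ref{fig:F3-standard-core}. The involution \(v_1\leftrightarrow v_8\), \(v_2\leftrightarrow v_9\), \(v_3\leftrightarrow v_7\), fixing \(v_4,v_5,v_6\), preserves every coupling: \(2,\sqrt2,\sqrt3\) on each side. It also preserves the diagonal attachment term, since \(v_1\) and \(v_8\) each carry weight \(1\). Hence \(F_X(\mu)\) commutes with this permutation. In the even/odd basis, \(F_X\) splits as a \(6\times6\) even block on \(\{v_1^+,v_2^+,v_3^+,v_4,v_5,v_6\}\) and a \(3\times3\) odd block on \(\{v_1^-,v_2^-,v_3^-\}\).

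The odd block is a path \(v_2^- - v_1^- - v_3^-\) in which \(v_3^-\) has lost its coupling to \(v_4\). Both blocks are weighted trees, so each determinant can be evaluated by successive leaf elimination. Removing a leaf \(w\) with diagonal entry \(d_w\) and coupling \(h\) to a neighbour \(v\) multiplies the determinant by \(d_w\) and replaces \(d_v\) by \(d_v-h^2/d_w\). In the even block the leaves \(v_2^+,v_5,v_6\) are eliminated first; then \(v_1^+\), \(v_3^+\) and \(v_4\) follow along the path. Throughout, \(d_v=-x\) at unattached vertices, \(d_v=-x+\sqrt3\mu^{-1}\) at \(v_1^\pm\), and \(d_{v_6}=-x+2\sqrt3\mu^{-1}\). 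Clearing denominators, each block determinant becomes an explicit Laurent polynomial in \(\mu\).

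Next I would factor these Laurent polynomials and match them with \eqref{eq:F3-detF}. The expected distribution is as follows.
\begin{itemize}
\item The factors \((\mu^2+1)^2\) and \((\mu-1)^2(\mu+1)^2\) are reciprocal-invariant, i.e. functions of \(x\) alone (\(x=0\) and \(x=\pm2\sqrt3\)). They should come from the core-supported part \(H_0\) of Theorem~\ref{thm:core-supported-factorization}, and could be identified directly by exhibiting eigenvectors of \(H_C\) inside \(\ker V^*\), for instance combinations supported on \(v_2,v_3,v_4,v_5,\dots\) that vanish at \(v_1,v_6,v_8\).
\item The remaining factors \((\mu^2-3)(3\mu^4+1)\) should come from the coupled part.
\end{itemize}
The prefactor \(-27\sqrt3\,\mu^{-5}\) is fixed by comparing the lowest-order term in \(\mu\). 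As \(\mu\to0\), \(F_X\sim-\sqrt3\mu^{-1}(I_9-C_{\mathrm{att}}C_{\mathrm{att}}^*)\) on the leading order. This has a kernel, so the leading behaviour needs care: I would instead compare the top-degree coefficient as \(\mu\to\infty\), where \(F_X\sim-\sqrt3\mu I_9\) and \(\det F_X\sim-3^{9/2}\mu^9\). The right-hand side of \eqref{eq:F3-detF} gives \(-27\sqrt3\cdot3\,\mu^{-5+14}=-3^{9/2}\mu^9\), which matches.

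As further checks I would evaluate both sides at a few points, e.g. \(\mu=1\) and \(\mu=\sqrt3\) (a zero of order one, corresponding to the eigenvalue \(x=4=q+1\), the trivial representation). The main obstacle is purely the bookkeeping of the leaf-elimination rational functions in the \(6\times6\) even block. In particular, the double factors must be tracked correctly: they may be split between the even and odd blocks rather than sitting in one of them. If the hand factorization stalls, I would fall back on computing the determinant as a polynomial in \(x\) and \(y=\mu^{-1}\) of bounded degree and interpolating.
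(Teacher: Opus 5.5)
Your proposal is correct, and for the determinant it takes a different route from the paper. The paper obtains \eqref{eq:F3-F} exactly as you do. For \eqref{eq:F3-detF}, however, it quotes the non-monomial factor from Arends--Peterson--Weich. It then uses sparse elimination of $v_2,v_5,v_9,v_3,v_7$ in the full $9\times9$ matrix only to pin down the scalar $-27\sqrt3$ and the monomial $\mu^{-5}$.

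You instead compute the whole determinant yourself, and the computation closes. The involution $v_1\leftrightarrow v_8$, $v_2\leftrightarrow v_9$, $v_3\leftrightarrow v_7$ does commute with $F_X(\mu)$, and both parity blocks are weighted trees. Your leaf elimination gives:
\begin{itemize}
\item odd block: $-3\sqrt3\,\mu^{-1}(\mu^2+1)(\mu^2-1)$;
\item even block: $-9\mu^{-4}(\mu^2+1)(1-\mu^2)(\mu^2-3)(3\mu^4+1)$, with final pivot at $v_4$ equal to $(\mu^2-3)(3\mu^4+1)/\bigl(\sqrt3\,\mu(\mu^2+1)(1-\mu^2)\bigr)$.
\end{itemize}
The product is exactly \eqref{eq:F3-detF}. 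The one bookkeeping trap is that in the even block the coupling from $v_3^+$ to the fixed vertex $v_4$ is $\sqrt6$, not $\sqrt3$.

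Your route is self-contained, so the cited resonance polynomial becomes a cross-check rather than an input. It also shows where the factors live. Each parity sector carries one $(\mu^2+1)$ and one $(\mu^2-1)$, while $(\mu^2-3)(3\mu^4+1)$ sits entirely in the even sector. This fits the fact that the odd sector couples only to the channel $v_W$, on which $S_X=\mu^{-2}$ by \eqref{eq:F3-decoupled-channels}. The paper's route is shorter but relies on the citation for the factorization.

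One expectation in your bullet list is false, although your computation does not use it. The threshold factor $(\mu-1)^2(\mu+1)^2$ does not come from the core-supported part $H_0$. Suppose $H_Cu=\lambda u$ with $\lambda\ne0$ and $u$ vanishes at $v_1,v_6,v_8$. Then the rows of $v_6,v_2,v_9,v_5,v_1,v_8$ successively force $u_4=u_2=u_9=u_5=u_3=u_7=0$. Hence $\mathscr H_0=\operatorname{span}\{u_1,u_2\}$ carries only the eigenvalue $0$, and $\det(H_0-xI)=x^2=3\mu^{-2}(\mu^2+1)^2$ accounts only for $(\mu^2+1)^2$.

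The factor $(\mu^2-1)^2$ lies in $F_{\mathrm{coupled}}$. For instance, the odd sector splits as $\CC u_2$ plus a coupled $2\times2$ block of determinant $3(\mu^2-1)$. This factor is reciprocal-invariant up to a monomial, which is why it drops out of the divisor of $\det S_X$. An attempt to exhibit core-supported eigenvectors at $x=\pm2\sqrt3$ would therefore fail, consistent with the paper's remark that these threshold zeros give no $L^2$-eigenfunction.
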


\begin{proof}
Formula \eqref{eq:F3-F} is \eqref{eq:F-zeta} with \(q=3\) and
\eqref{eq:F3-attachment}.  The non-monomial factorization is the resonance
polynomial computed in \cite[Sec.~7.7]{ArendsPetersonWeich}.  The scalar and
monomial in \eqref{eq:F3-detF}, which depend on the present normalization,
follow by direct sparse elimination in the order
\(v_2,v_5,v_9,v_3,v_7\).
\end{proof}

\subsubsection{Eisenstein scattering coefficients}

Set
\begin{align*}
 \Delta(\mu)&=(\mu^2-3)(3\mu^4+1),\\
 p(\mu)&=3\mu^6-7\mu^4+3\mu^2-3,\\
 \beta_0(\mu)&=2(\mu^2+1),\qquad
 \beta_1(\mu)=2\mu(\mu^2+1),\qquad
 \beta_2(\mu)=2\mu^2(\mu^2+1).
\end{align*}

\begin{theorem}[Four-cusp Eisenstein scattering]\label{thm:F3-scattering}
For the elliptic quotient \eqref{eq:F3-curve}, the Jacobi-normalized
scattering matrix is
\begin{equation}\label{eq:F3-S}
 S_X(\mu)=\frac1{\Delta(\mu)}
 \begin{pmatrix}
 p(\mu)/\mu^2&\beta_0(\mu)&\beta_1(\mu)&\beta_1(\mu)\\
 \beta_0(\mu)&p(\mu)/\mu^2&\beta_1(\mu)&\beta_1(\mu)\\
 \beta_1(\mu)&\beta_1(\mu)&\beta_2(\mu)&p(\mu)\\
 \beta_1(\mu)&\beta_1(\mu)&p(\mu)&\beta_2(\mu)
 \end{pmatrix},
\end{equation}
and hence
\begin{equation}\label{eq:F3-detS}
 \det S_X(\mu)
 =-\frac{(3\mu^2-1)(\mu^4+3)}
 {\mu^4(\mu^2-3)(3\mu^4+1)}.
\end{equation}
The classical constant-term matrix and the Efrat-holomorphic coefficient
matrix are
\begin{equation}\label{eq:F3-automorphic-matrices}
 \mathcal S_{\mathrm{aut},X}(s)=D_0^{-1}S_X(\mu)D_0,
 \qquad
 \Phi_X(s)=\frac{c_{\mathrm{in}}(\mu)}{c_{\mathrm{out}}(\mu)}
 D_0^{-1}S_X(\mu)D_0,
 \qquad
 D_0=\operatorname{diag}(1,1,\sqrt3,\sqrt3).
\end{equation}
\end{theorem}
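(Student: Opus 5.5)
The plan is to apply Theorem~\ref{thm:exact-scattering} directly with \(a=\sqrt3\), \(V=\sqrt3\,C_{\mathrm{att}}\) and \(F=F_X\) from Proposition~\ref{prop:F3-resonance-input}. Since \(V^*F^{-1}V=3\,C_{\mathrm{att}}^*F_X^{-1}C_{\mathrm{att}}\) and \((\mu^{-1}-\mu)/a\cdot 3=\sqrt3(\mu^{-1}-\mu)\), formula \eqref{eq:S-from-F} becomes
\[
 S_X(\mu)=-I_4+\sqrt3(\mu^{-1}-\mu)\,
 C_{\mathrm{att}}^*F_X(\mu)^{-1}C_{\mathrm{att}}.
\]
Only the \(3\times3\) block of \(F_X(\mu)^{-1}\) on the attachment vertices \(v_1,v_8,v_6\) is therefore needed. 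The columns \(e_6,e_6\) of \(C_{\mathrm{att}}\) make the \((3,3),(3,4),(4,3),(4,4)\) entries all equal to \((F_X^{-1})_{66}\). This explains why the diagonal entries \(\beta_2/\Delta\) and the off-diagonal entries \(p/\Delta\) of the eastern block both come from one core Green function, up to the \(-I\) shift.

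To compute that block, I would take the Schur complement of \(F_X\) onto \(\{v_1,v_8,v_6\}\) by eliminating the remaining vertices, using the sparse order already used for \eqref{eq:F3-detF}. First the leaves \(v_2,v_5,v_9\) each contribute a scalar \(-4/x\) with \(x=\sqrt3(\mu+\mu^{-1})\). Then \(v_3\) and \(v_7\) are eliminated, and finally \(v_4\). The result is an explicit \(3\times3\) rational matrix \(\widetilde F(\mu)\), and its inverse is the required block. By symmetry of the core under the reflection \(v_1\leftrightarrow v_8\), \(v_2\leftrightarrow v_9\), \(v_3\leftrightarrow v_7\) (compare \eqref{eq:F3-Hcore}), \(\widetilde F\) commutes with the swap of its first two coordinates. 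This reduces the inversion to symmetric and antisymmetric sectors and explains the equal \(p/\mu^2\) diagonal entries and the \(\beta_0,\beta_1\) pattern.

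Next I would clear denominators and cancel common factors. The factors \((\mu^2+1)^2(\mu\mp1)^2\) in \eqref{eq:F3-detF} should be core-supported or threshold contributions in the sense of Theorem~\ref{thm:core-supported-factorization}, and they should cancel from the coupled block, leaving the common denominator \(\Delta(\mu)\). This cancellation, together with the polynomial bookkeeping that yields exactly \(p,\beta_0,\beta_1,\beta_2\), is the main obstacle. I would organize it sector by sector and check it against the unitarity \(S(\mu^{-1})S(\mu)=I\) of Corollary~\ref{cor:S-functional-unitary}.

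For \eqref{eq:F3-detS} I would not expand the \(4\times4\) determinant. Instead I would use Theorem~\ref{thm:determinant-identity}, \(\det S=\det F_X(\mu^{-1})/\det F_X(\mu)\) with \(r=4\), together with \eqref{eq:F3-detF}. The reciprocal-invariant factors cancel, and \((\mu^{-2}-3)(3\mu^{-4}+1)\mu^{5}/\mu^{-5}\) divided by \(\mu^{-5}(\mu^2-3)(3\mu^4+1)\) simplifies to the stated quotient, including the sign. Finally, \eqref{eq:F3-automorphic-matrices} follows from \eqref{eq:automorphic-scattering-dictionary} and \eqref{eq:Phi-S-dictionary} with \(W=\frac1{24}\operatorname{diag}(1,1,3,3)\) from \eqref{eq:F3-widths}, so that \(W^{1/2}\) is a scalar multiple of \(D_0\) and the scalar cancels in the conjugation.
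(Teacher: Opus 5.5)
Your proposal follows the paper's own proof, written out in more detail. The paper likewise substitutes \eqref{eq:F3-Hcore}--\eqref{eq:F3-attachment} into \eqref{eq:S-from-F} and cancels common factors, gets \eqref{eq:F3-detS} from Theorem~\ref{thm:determinant-identity} and \eqref{eq:F3-detF}, and reads off \eqref{eq:F3-automorphic-matrices} from \(W^{1/2}=24^{-1/2}D_0\). Your Schur complement onto \(\{v_1,v_8,v_6\}\) and the \(v_1\leftrightarrow v_8\) symmetry are a good way to organize the elimination that the paper leaves implicit. Three bookkeeping points need correcting before you carry it out.

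First, the determinant monomials. The factors \((\mu^2+1)^2(\mu^2-1)^2\) in \eqref{eq:F3-detF} are not invariant under \(\mu\mapsto\mu^{-1}\); they pick up \(\mu^{-8}\). The \(\mu^{-4}\) of \eqref{eq:F3-detS} comes out only after combining three pieces:
\begin{itemize}
\item the ratio \(\mu^{5}/\mu^{-5}=\mu^{10}\) of the monomial prefactors;
\item this \(\mu^{-8}\);
\item the \(\mu^{-6}\) in \(\Delta(\mu^{-1})=\mu^{-6}(1-3\mu^2)(\mu^4+3)\).
\end{itemize}
Your displayed quotient drops the \(\mu^{-8}\), so it has the wrong power of \(\mu\). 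Writing \(\det F_X(\mu)=-27\sqrt3\,(\mu+\mu^{-1})^2(\mu-\mu^{-1})^2\mu^{-1}\Delta(\mu)\) makes the cancellation exact and gives \(\det S_X=\mu^2\Delta(\mu^{-1})/\Delta(\mu)\).

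Second, the threshold factor. Theorem~\ref{thm:core-supported-factorization} accounts only for \((\mu^2+1)^2\). The core-supported vectors vanish at \(v_1,v_8,v_6\), so that factor never enters the attachment block. The threshold factor \((\mu^2-1)^2\) is not core-supported; it sits in \(F_{\mathrm{coupled}}\). In your sector decomposition:
\begin{itemize}
\item the antisymmetric western Schur complement is \(\sqrt3\mu(1-\mu^2)/(\mu^2+1)\);
\item the symmetric \(2\times2\) block, in the basis \((e_1+e_8)/\sqrt2,\ e_6\), has determinant \(3(\mu^2-1)\Delta(\mu)/((\mu^2+1)(3\mu^4-4\mu^2+3))\).
\end{itemize}
Each sector therefore carries one factor \(\mu^2-1\). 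These factors disappear from \(S_X\) only because of the prefactor \(\mu^{-1}-\mu\) in \eqref{eq:S-from-F}, not because of Theorem~\ref{thm:core-supported-factorization}.

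Third, a sign. A leaf's diagonal entry in \(F_X\) is \(-x\), so eliminating it adds \(+4/x\), not \(-4/x\), to its neighbour's diagonal entry in the Schur complement.
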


\begin{proof}
Substitution of \eqref{eq:F3-Hcore}--\eqref{eq:F3-attachment} into
\eqref{eq:S-from-F}, followed by cancellation of common factors, gives
\eqref{eq:F3-S}.  Its determinant is \eqref{eq:F3-detS}; equivalently it
follows from Proposition~\ref{prop:F3-resonance-input} and Theorem
\ref{thm:determinant-identity}.  Finally, \eqref{eq:F3-widths} and Theorem
\ref{thm:normalization-dictionary} give
\eqref{eq:F3-automorphic-matrices}; the scalar factor \(24^{-1/2}\) cancels in
the conjugation.
\end{proof}

The coefficient matrix contains more information than \eqref{eq:F3-detS}.  For example,
\begin{equation}\label{eq:F3-decoupled-channels}
 S_X(\mu)\frac{(1,-1,0,0)^t}{\sqrt2}
 =\mu^{-2}\frac{(1,-1,0,0)^t}{\sqrt2},
 \qquad
 S_X(\mu)\frac{(0,0,1,-1)^t}{\sqrt2}
 =-\frac{(0,0,1,-1)^t}{\sqrt2}.
\end{equation}
The second identity is forced by the two eastern cusps sharing the same
attachment vertex: their antisymmetric combination lies in
\(\ker C_{\mathrm{att}}\) and has the constant scattering coefficient
\(-1\).  All nontrivial arithmetic poles occur in the remaining
two-dimensional symmetric channel block.

\begin{corollary}[Two exact Maass--Selberg channels in the elliptic example]
\label{cor:F3-maass-selberg-channels}
Put
\[
 v_W=\frac{(1,-1,0,0)^t}{\sqrt2},\qquad
 v_E=\frac{(0,0,1,-1)^t}{\sqrt2},
\]
and, for \(\mu=e^{i\theta}\), let
\[
 \mathcal M_X(\theta)
 =iS_X(e^{i\theta})^*\partial_\theta S_X(e^{i\theta}).
\]
Then
\begin{equation}\label{eq:F3-maass-selberg-eigenchannels}
 \mathcal M_X(\theta)v_W=2v_W,
 \qquad
 \mathcal M_X(\theta)v_E=0.
\end{equation}
The orthogonal complement of \(\operatorname{span}\{v_W,v_E\}\) is the
two-dimensional symmetric channel space on which the remaining arithmetic
mixing occurs.  In particular, the two scalar identities in
\eqref{eq:F3-maass-selberg-eigenchannels} distinguish channel behavior that
is collapsed in the single quantity
\(i\partial_\theta\log\det S_X(e^{i\theta})\).
\end{corollary}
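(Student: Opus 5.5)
The plan uses the two eigenvector identities \eqref{eq:F3-decoupled-channels}, which come from Theorem~\ref{thm:F3-scattering}, together with unitarity of \(S_X\) on the circle. First, confirm both identities directly from \eqref{eq:F3-S}.

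For \(v_E\), apply the matrix in \eqref{eq:F3-S} to \((0,0,1,-1)^t\). The first two rows give \(\beta_1-\beta_1=0\). The last two rows give \(\pm(\beta_2-p)/\Delta\). Since
\[
 \beta_2-p=2\mu^4+2\mu^2-3\mu^6+7\mu^4-3\mu^2+3=-(3\mu^6-9\mu^4+\mu^2-3),
\]
and \(\Delta=3\mu^6-9\mu^4+\mu^2-3\), the eigenvalue is \(-1\).

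For \(v_W\), rows three and four give \(\beta_1-\beta_1=0\). The first row gives \((p/\mu^2-\beta_0)/\Delta\), and
\[
 p-\mu^2\beta_0=3\mu^6-9\mu^4+\mu^2-3=\Delta,
\]
so the eigenvalue is \(\mu^{-2}\). As a cross-check, \(v_E\) also follows structurally: \(C_{\mathrm{att}}v_E=0\), so \(Vv_E=0\), and \eqref{eq:S-from-F} gives \(S_Xv_E=-v_E\) without any computation.

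With constant eigenvectors in hand, set \(\mu=e^{i\theta}\). Because \(v_W\) and \(v_E\) do not depend on \(\theta\),
\[
 \partial_\theta S_Xv_W=\partial_\theta(e^{-2i\theta})v_W=-2ie^{-2i\theta}v_W,
 \qquad
 \partial_\theta S_Xv_E=0.
\]
Unitarity, \eqref{eq:S-unitary} or \eqref{eq:green-flux-unitarity}, gives \(S_X^*v_W=\overline{e^{-2i\theta}}\,v_W=e^{2i\theta}v_W\). This uses that an eigenvector of a unitary matrix is an eigenvector of its adjoint with the conjugate eigenvalue. Hence
\[
 \mathcal M_Xv_W=i\cdot(-2i)e^{-2i\theta}e^{2i\theta}v_W=2v_W,
 \qquad
 \mathcal M_Xv_E=0.
\]
At the finitely many points of \(\Theta_F\), extend these formulas by continuity, as the paper's convention allows.

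For the remaining assertions, \(\mathcal M_X\) is Hermitian by Corollary~\ref{cor:maass-selberg-traceless}. It therefore preserves the orthogonal complement of \(\operatorname{span}\{v_W,v_E\}\), which is spanned by \((1,1,0,0)^t\) and \((0,0,1,1)^t\); this is the symmetric two-dimensional block. The trace \(i\partial_\theta\log\det S_X\) equals \(2+0\) plus the trace of that block. The trace alone cannot isolate the eigenvalues \(2\) and \(0\), and this is what the collapse statement means.

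The main obstacle is only the polynomial bookkeeping in the first step. The structural shortcut via \(\ker C_{\mathrm{att}}\) removes it for \(v_E\); for \(v_W\), the check reduces to the single identity \(p-\mu^2\beta_0=\Delta\).
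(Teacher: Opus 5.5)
Your proposal is correct and follows the paper's proof. Both arguments use that $v_W$ and $v_E$ are $\theta$-independent eigenvectors of $S_X$ with eigenvalues $e^{-2i\theta}$ and $-1$, so $\mathcal M_X$ acts on them by $i\bar\sigma\,\partial_\theta\sigma$, and both read the final assertion off the trace identity; your direct check of \eqref{eq:F3-decoupled-channels} from \eqref{eq:F3-S}, the shortcut $Vv_E=0$, and the explicit appeal to unitarity for $S_X^*v=\bar\sigma v$ only fill in steps the paper leaves implicit.
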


\begin{proof}
The eigenvectors in \eqref{eq:F3-decoupled-channels} are independent of
\(\theta\), with scattering eigenvalues \(e^{-2i\theta}\) and \(-1\),
respectively.  Hence
\[
 i\overline{e^{-2i\theta}}\,\partial_\theta e^{-2i\theta}=2,
 \qquad
 i\overline{(-1)}\,\partial_\theta(-1)=0.
\]
This proves \eqref{eq:F3-maass-selberg-eigenchannels}; the last assertion is
the trace identity \eqref{eq:trace-mass-determinant}.
\end{proof}

\subsubsection{Hasse--Weil, core-supported, and threshold factors}

Since \(\#X(\mathbb F_3)=4\), the Frobenius trace is zero and the Hasse--Weil
zeta function of \(X\) is
\begin{equation}\label{eq:F3-zeta}
        Z_X(u)=\frac{1+3u^2}{(1-u)(1-3u)}.
\end{equation}
As observed in \cite[Sec.~7.7]{ArendsPetersonWeich}, the Hasse--Weil
numerator evaluated at \(u=\mu^2\) is exactly
\(1+3\mu^4\), the last factor of \eqref{eq:F3-detF}.  Its four roots have modulus \(3^{-1/4}\) and are genuine poles of \eqref{eq:F3-S} and \eqref{eq:F3-detS}.  The factorization has the following spectral meaning.
Here a threshold zero means a zero at the branch point \(\mu=\pm1\) that does
not give an \(L^2\)-eigenfunction in this example.

\begin{center}
\small
\begin{tabular}{@{}c|c|c@{}}
factor of \(\det F_X\) & spectral meaning & contribution to \(\det S_X\)\\
\hline
\(\mu^2-3\) & \(L^2\)-eigenvalues \(\pm4\) of \(A\) & poles; reciprocal zeros\\
\((\mu^2+1)^2\) & two core eigenfunctions with eigenvalue \(0\) & cancels identically\\
\((\mu^2-1)^2\) & threshold zeros \(\mu=\pm1\) & cancels in the divisor\\
\(3\mu^4+1\) & resonances from the Hasse--Weil numerator
              & poles; reciprocal zeros
\end{tabular}
\end{center}

The core-supported eigenspace can be checked without a determinant.  In the
vertex order of \eqref{eq:F3-Hcore}, it is spanned by
\begin{equation}\label{eq:F3-core-supported-vectors}
\begin{aligned}
 u_1&=(0,\sqrt6/3,-2\sqrt3/3,0,1,0,0,0,0)^t,\\
 u_2&=(0,-1,\sqrt2,0,0,0,-\sqrt2,0,1)^t.
\end{aligned}
\end{equation}
Indeed, \(H_Cu_j=0\) and \(C_{\mathrm{att}}^*u_j=0\).  Hence these vectors extend by zero to compactly supported \(L^2\)-eigenfunctions and account for \((\mu^2+1)^2\).

Although the two threshold parameters are zeros of \(\det F_X\), the common factor cancels from every entry of \eqref{eq:F3-S}.  The meromorphic continuation has the finite limits
\begin{equation}\label{eq:F3-threshold-limits}
 S_X(\sigma)=\frac12
 \begin{pmatrix}
 1&-1&-\sigma&-\sigma\\
 -1&1&-\sigma&-\sigma\\
 -\sigma&-\sigma&-1&1\\
 -\sigma&-\sigma&1&-1
 \end{pmatrix},
 \qquad \sigma\in\{1,-1\}.
\end{equation}
Thus a threshold resonance may be invisible to the scattering divisor while leaving a nontrivial limiting matrix.  This is precisely the distinction between a resonance list, a determinant, and the coefficient-level scattering operator.

\subsubsection{Spectral expansion}

For the chosen cusp origins, \eqref{eq:F3-widths} makes the Efrat-holomorphic Plancherel density completely explicit:
\begin{equation}\label{eq:F3-Plancherel-density}
 P_{E,X}(\theta)\,d\theta
 =\frac{9}{4\pi(1+3\sin^2\theta)}
 \operatorname{diag}\left(1,1,\frac13,\frac13\right)d\theta.
\end{equation}
The constant-term-normalized family has the constant density
\begin{equation}\label{eq:F3-constant-term-density}
 \frac3\pi\operatorname{diag}\left(1,1,\frac13,\frac13\right)d\theta.
\end{equation}
Combining the resonance classification in \cite[Sec.~7.7]{ArendsPetersonWeich}
with Corollary~\ref{thm:arithmetic-decomposition} yields
\begin{equation}\label{eq:F3-spectrum}
 L^2(\Gamma\backslash\mathcal T,\lambda)
 =\ker(A-4)\oplus\ker(A+4)\oplus\ker A\oplus L^2_{\mathrm{Eis}},
\end{equation}
where the first two summands have dimension one, \(\dim\ker A=2\), and
\(L^2_{\mathrm{Eis}}\) has spectrum \([-2\sqrt3,2\sqrt3]\) of multiplicity
four.  For a spectral value \(\xi\), let \(P_\xi^A\) denote the orthogonal
projection onto \(\ker(A-\xi)\).  For
\(f\in L^2(\Gamma\backslash\mathcal T,\lambda)\), the automorphic inversion
formula is
\begin{equation}\label{eq:F3-full-expansion}
 f=P_{4}^Af+P_{-4}^Af+P_{0}^Af
 +\int_0^\pi E_s^{\mathrm{hol}}P_{E,X}(\theta)
 (E_s^{\mathrm{hol}})^*f\,d\theta,
 \qquad s=\frac12+\frac{i\theta}{\log3}.
\end{equation}
For general \(f\), the Eisenstein integral is understood through the unitary
transform of Theorem~\ref{thm:unitary-eisenstein-transform}, rather than as a
pointwise pairing with a generalized eigenfunction; conjugation by \(U\)
gives the standard Jacobi expansion \eqref{eq:complete-standard-resolution}.
Here \(\operatorname{rank}P_4^A=\operatorname{rank}P_{-4}^A=1\) and
\(\operatorname{rank}P_0^A=2\).  The resonance set and its classification are
recalled from \cite[Sec.~7.7]{ArendsPetersonWeich}.  The coefficient matrix
\eqref{eq:F3-S}, its two automorphic normalizations, the symmetry decomposition
\eqref{eq:F3-decoupled-channels}, the channel values
\eqref{eq:F3-maass-selberg-eigenchannels}, and the normalized Plancherel
expansion are the additional conclusions obtained here.

\section{Conclusion and scope}\label{sec:consequences}

Theorem~\ref{thm:unitary-eisenstein-transform} identifies the absolutely
continuous spherical automorphic space unitarily with the \(r\)-channel
spectral space in \eqref{eq:intro-unitary-transform}, with exact stabilizer
and cusp-width normalization;
Corollary~\ref{cor:adelic-unitary-identification}
assembles the component transforms at fixed finite level.  The accompanying
matrix Maass--Selberg identity recovers
\(iS^*\partial_\theta S\) from truncated automorphic Gram matrices.  Its
traceless part, made explicit for \(\Gamma_0(T)\) and the four-cusp elliptic
quotient, is channel-resolved information unavailable from the scattering
determinant alone.

The finite resonance matrix and elliptic factorization of Arends, Peterson,
and Weich are used as inputs \cite{ArendsPetersonWeich}; adjoining the labelled
attachment map produces the Eisenstein coefficient matrix and separates
core-supported eigenfactors from scattering poles.  The present paper focuses on the rank-one cusp setting. 
Natural directions for further study include quotients with noncuspidal regular ends, possible extensions to higher-rank buildings, and a more systematic analysis of threshold phenomena at \(\zeta=\pm1\).






\begingroup
\linespread{1.15}\selectfont

\endgroup

\enlargethispage{2\baselineskip}

\begin{thebibliography}{99}
\setlength{\parsep}{0pt}

\bibitem{ArendsPetersonWeich}
C. Arends, C. Peterson, and T. Weich,
\emph{Resonances on geometrically finite graphs},
\href{https://arxiv.org/abs/2603.26443}{arXiv:2603.26443v2} [math.SP], 2026.

\bibitem{BassLubotzky}
H. Bass and A. Lubotzky,
\emph{Tree Lattices},
Progress in Mathematics, vol. 176, Birkh\"auser, 2001.

\bibitem{Bravo}
C. Bravo,
Quotients of the Bruhat--Tits tree by function field analogs of the Hecke congruence subgroups,
\emph{J. Number Theory} \textbf{259} (2024), 171--218,
\href{https://doi.org/10.1016/j.jnt.2023.12.010}{doi:10.1016/j.jnt.2023.12.010}.

\bibitem{Cartier}
P. Cartier,
Harmonic analysis on trees,
in \emph{Harmonic Analysis on Homogeneous Spaces} (Williamstown, MA, 1972),
Proc. Sympos. Pure Math., vol.~26, American Mathematical Society,
Providence, RI, 1973, pp.~419--424,
\href{https://doi.org/10.1090/pspum/026/0338272}{doi:10.1090/pspum/026/0338272}.

\bibitem{Chekhov}
L. Chekhov,
\(L\)-functions in scattering on \(p\)-adic multiloop surfaces,
\emph{J. Math. Phys.} \textbf{36} (1995), no. 1, 414--425,
\href{https://doi.org/10.1063/1.531315}{doi:10.1063/1.531315}.

\bibitem{ChekhovSurvey}
L. O. Chekhov,
A spectral problem on graphs and \(L\)-functions,
\emph{Russian Math. Surveys} \textbf{54} (1999), no.~6, 1197--1232,
\href{https://doi.org/10.1070/RM1999v054n06ABEH000231}
{doi:10.1070/RM1999v054n06ABEH000231}.

\bibitem{ColinTruc}
Y. Colin de Verdi\`ere and F. Truc,
Scattering theory for graphs isomorphic to a regular tree at infinity,
\emph{J. Math. Phys.} \textbf{54} (2013), 063502,
\href{https://doi.org/10.1063/1.4807310}{doi:10.1063/1.4807310}.

\bibitem{DeitmarKang}
A. Deitmar and M.-H. Kang,
Tree-lattice zeta functions and class numbers,
\emph{Michigan Math. J.} \textbf{67} (2018), no.~3, 617--645,
\href{https://doi.org/10.1307/mmj/1529460323}
{doi:10.1307/mmj/1529460323}.

\bibitem{EfratSpectralDeformations}
I. Efrat,
Spectral deformations of automorphic functions over graphs of groups,
\emph{Invent. Math.} \textbf{102} (1990), no.~2, 447--462,
\href{https://doi.org/10.1007/BF01233435}{doi:10.1007/BF01233435}.

\bibitem{EfratAutomorphicSpectra}
I. Efrat,
Automorphic spectra on the tree of \(\mathrm{PGL}_2\),
\emph{Enseign. Math. (2)} \textbf{37} (1991), no. 1--2, 31--43,
\href{https://doi.org/10.5169/seals-58728}{doi:10.5169/seals-58728}.

\bibitem{FigaTalamancaNebbia}
A. Fig\`a-Talamanca and C. Nebbia,
\emph{Harmonic Analysis and Representation Theory for Groups Acting on
Homogeneous Trees},
London Mathematical Society Lecture Note Series, vol.~162,
Cambridge University Press, Cambridge, 1991,
\href{https://doi.org/10.1017/CBO9780511662324}{doi:10.1017/CBO9780511662324}.

\bibitem{Golinskii}
L. Golinskii,
Spectra of infinite graphs with tails,
\emph{Linear Multilinear Algebra} \textbf{64} (2016), 2270--2296,
\href{https://doi.org/10.1080/03081087.2016.1155529}{doi:10.1080/03081087.2016.1155529}.

\bibitem{Harder}
G. Harder,
Minkowskische Reduktionstheorie {\"u}ber Funktionenk{\"o}rpern,
\emph{Invent. Math.} \textbf{7} (1969), 33--54,
\href{https://doi.org/10.1007/BF01418773}{doi:10.1007/BF01418773}.

\bibitem{HarderAutomorphic}
G. Harder,
Chevalley groups over function fields and automorphic forms,
\emph{Ann. of Math. (2)} \textbf{100} (1974), no.~2, 249--306,
\href{https://doi.org/10.2307/1971073}{doi:10.2307/1971073}.

\bibitem{HongKwonZeta}
S. Hong and S. Kwon,
Zeta functions of geometrically finite graphs of groups,
\emph{Ann. Comb.} \textbf{29} (2025), 995--1018,
\href{https://doi.org/10.1007/s00026-025-00759-w}
{doi:10.1007/s00026-025-00759-w}.

\bibitem{JacquetLanglands}
H. Jacquet and R. P. Langlands,
\emph{Automorphic Forms on \(\mathrm{GL}(2)\)},
Lecture Notes in Mathematics, vol. 114, Springer, 1970.

\bibitem{Kato}
T. Kato,
\emph{Perturbation Theory for Linear Operators},
Classics in Mathematics, Springer, 1995.

\bibitem{Knudson}
K. P. Knudson,
Integral homology of \(\mathrm{PGL}_2\) over elliptic curves,
in \emph{Algebraic \(K\)-Theory} (Seattle, WA, 1997),
Proc. Sympos. Pure Math., vol.~67, American Mathematical Society,
Providence, RI, 1999, pp.~175--180.

\bibitem{LanglandsEisenstein}
R. P. Langlands,
\emph{On the Functional Equations Satisfied by Eisenstein Series},
Lecture Notes in Mathematics, vol. 544, Springer, 1976.

\bibitem{LaxPhillips}
P. D. Lax and R. S. Phillips,
\emph{Scattering Theory for Automorphic Functions},
Annals of Mathematics Studies, vol.~87, Princeton University Press,
Princeton, NJ, 1976.

\bibitem{LiEisenstein}
W.-C. W. Li,
Eisenstein series and decomposition theory over function fields,
\emph{Math. Ann.} \textbf{240} (1979), 115--139,
\href{https://doi.org/10.1007/BF01364628}{doi:10.1007/BF01364628}.

\bibitem{Nagao}
H. Nagao,
On \(\mathrm{GL}(2,K[x])\),
\emph{J. Inst. Polytech. Osaka City Univ. Ser. A} \textbf{10} (1959), 117--121.

\bibitem{NagoshiSpectra}
H. Nagoshi,
Spectra of arithmetic infinite graphs and their application,
\emph{Interdiscip. Inform. Sci.} \textbf{7} (2001), no.~1, 67--76,
\href{https://doi.org/10.4036/iis.2001.67}{doi:10.4036/iis.2001.67}.

\bibitem{Novikov}
S. P. Novikov,
Discrete Schr\"odinger operators and topology,
\emph{Asian J. Math.} \textbf{2} (1998), no. 4, 921--934.

\bibitem{PaulinGeometricallyFinite}
F. Paulin,
Groupes g\'eom\'etriquement finis d'automorphismes d'arbres et
approximation diophantienne dans les arbres,
\emph{Manuscripta Math.} \textbf{113} (2004), no.~1, 1--23,
\href{https://doi.org/10.1007/s00229-003-0413-1}
{doi:10.1007/s00229-003-0413-1}.

\bibitem{ReedSimonIII}
M. Reed and B. Simon,
\emph{Methods of Modern Mathematical Physics. III: Scattering Theory},
Academic Press, 1979.

\bibitem{RomanovRudinBT}
R. V. Romanov and G. E. Rudin,
Scattering on the Bruhat--Tits tree. I,
\emph{Phys. Lett. A} \textbf{198} (1995), 113--118,
\href{https://doi.org/10.1016/0375-9601(94)00997-4}{doi:10.1016/0375-9601(94)00997-4}.

\bibitem{RomanovRudinPadic}
R. V. Romanov and G. E. Rudin,
Scattering on \(p\)-adic graphs,
\emph{Comput. Math. Appl.} \textbf{34} (1997), no. 5--6, 587--597,
\href{https://doi.org/10.1016/S0898-1221(97)00155-7}{doi:10.1016/S0898-1221(97)00155-7}.

\bibitem{Serre}
J.-P. Serre,
\emph{Trees},
Springer Monographs in Mathematics, Springer, 2003.

\bibitem{SmithLifetime}
F. T. Smith,
Lifetime matrix in collision theory,
\emph{Phys. Rev.} \textbf{118} (1960), no.~1, 349--356,
\href{https://doi.org/10.1103/PhysRev.118.349}%
{doi:10.1103/PhysRev.\allowbreak118.349}.

\bibitem{Takahashi}
S. Takahashi,
The fundamental domain of the tree of \(\mathrm{GL}_2\) over the function field of an elliptic curve,
\emph{Duke Math. J.} \textbf{72} (1993), no.~1, 85--97,
\href{https://doi.org/10.1215/S0012-7094-93-07204-3}{doi:10.1215/S0012-7094-93-07204-3}.

\bibitem{Teschl}
G. Teschl,
\emph{Jacobi Operators and Completely Integrable Nonlinear Lattices},
Mathematical Surveys and Monographs, vol. 72, American Mathematical Society, 2000.

\bibitem{VarbanovBrun}
M. Varbanov and T. A. Brun,
Quantum scattering theory on graphs with tails,
\emph{Phys. Rev. A} \textbf{80} (2009), 052330,
\href{https://doi.org/10.1103/PhysRevA.80.052330}{doi:10.1103/PhysRevA.80.052330}.

\bibitem{Yafaev}
D. R. Yafaev,
\emph{Mathematical Scattering Theory: General Theory},
Translations of Mathematical Monographs, vol. 105, American Mathematical Society, 1992.

\end{thebibliography}
\end{document}